\documentclass[12pt]{article}
\usepackage{amsthm,amsfonts,amssymb,amsmath}
\usepackage{stmaryrd}
\usepackage{cite,hyperref}
\usepackage{epsfig}
\usepackage{url}
\usepackage{xcolor,tikz,dsfont}
\usetikzlibrary{positioning, arrows.meta,calc}
\usetikzlibrary{decorations.pathreplacing,decorations.markings}
\usetikzlibrary{matrix}
\usepackage{multicol,graphicx}
\newcommand{\cev}[1]{\reflectbox{\ensuremath{\vec{\reflectbox{\ensuremath{#1}}}}}}
\usepackage{fullpage}
\usepackage{bbm}
\usepackage{pdflscape}
\usetikzlibrary{decorations}
\usepackage{svg}
\tikzset{
    vertex/.style={circle, draw, fill=black, inner sep=1.5pt},
    openvertex/.style={circle, draw, fill=white, inner sep=1.5pt},
    lbl/.style={draw=none, fill=none, circle=none, inner sep=1pt},
    arc/.style={-{Stealth[length=2.2mm,width=1.6mm]}, line width=0.8pt},
}

\tikzset{
    proofbox/.style={rounded corners=7pt, draw=black!45, fill=black!3, line width=0.5pt},
    addarc/.style={arc, red!70!black, line width=1.0pt},
    marked/.style={openvertex, line width=1.1pt},
    faint/.style={draw=black!40},
}

\usepackage[shortlabels]{enumitem}

\numberwithin{equation}{section}

\makeatletter
\let\c@figure\c@equation
\let\c@table\c@equation

\makeatother

\newtheorem{thm}[equation]{Theorem}

\newtheorem{lem}[equation]{Lemma}
\newtheorem{prop}[equation]{Proposition}

\newtheorem{constr}[equation]{Construction}

\theoremstyle{definition}
\newtheorem{defn}[equation]{Definition}

\newtheorem{obs}[equation]{Observation}

\theoremstyle{remark}

\newtheorem{rem}[equation]{Remark}

\title{On Tournament Anti-Sidorenko Orientations of Trees}
\author{Hao Chen\thanks{School of Mathematical Sciences, University of Science and Technology of China, Hefei, Anhui, 230026, China. E-mail: {\tt mathsch@mail.ustc.edu.cn}. This work was completed while the first author was visiting the University of Victoria. Research supported by National Key Research and Development Program of China 2023YFA1010201, National Natural Science Foundation of China grant 12125106 and China Scholarship Council No. 202406340066.} \and Felix Christian Clemen\thanks{Department of Mathematics and Statistics, University of Victoria, Victoria, B.C., Canada.}$\text{ }^{,}$\thanks{E-mail: {\tt fclemen@uvic.ca}. Research supported by PIMS Postdoctoral Fellowship PIMS-20260513-PDF.} \and Jonathan A. Noel\footnotemark[2]$\text{ }^{,}$\thanks{E-mail: {\tt noelj@uvic.ca}. Research supported by NSERC Discovery Grant RGPIN-2021-02460.}}

\DeclareTextCompositeCommand{\v}{OT1}{l}{l\nobreak\hspace{-.1em}'}
\DeclareTextCompositeCommand{\v}{OT1}{t}{t\nobreak\hspace{-.1em}'\nobreak\hspace{-.15em}}

\DeclareMathOperator{\rng}{rng}
\DeclareMathOperator{\cov}{cov}

\begin{document}

\maketitle

\begin{abstract}
An oriented graph $\vec{H}$ is said to be \emph{tournament anti-Sidorenko} if the homomorphism density of $\vec{H}$ in any tournament $\vec{T}$ is bounded above by the homomorphism density of $\vec{H}$ in a large uniformly random tournament. We prove the following:
\begin{enumerate}[(1)]
    \item Every oriented path with at least three arcs and exactly one non-leaf source or sink vertex is tournament anti-Sidorenko. 
    \item An oriented path is tournament anti-Sidorenko if the distance between any leaf vertex and any source or sink vertex is at least two and the distance between any pair of non-leaf source or sink vertices is a multiple of four.
    \item Every spider with exactly three legs admits a tournament anti-Sidorenko orientation.
\end{enumerate}
The first result proves a conjecture posed by He, Mani, Nie, Tung and Wei~\cite{HeManiNieTungWei25+}. The third resolves a problem from the same paper, in fact establishing a substantially more general statement, and provides evidence in support of a conjecture of Fox, Himwich, Mani and Zhou~\cite{FoxHimwichManiZhou24+}. The second yields the first family of tournament anti-Sidorenko oriented paths which is exponentially large with respect to the number of arcs. 
\end{abstract}

\section{Introduction}
\label{sec:intro}

\setlist{leftmargin=3\parindent,labelindent=3\parindent}
\setlist[enumerate]{%
  leftmargin=3\parindent,%
  align=left,%
  labelwidth=3\parindent,%
  labelsep=0pt%
}
\setlist[enumerate,1]{%
  label={\normalfont (\thesection.\arabic{equation})}, ref={\normalfont \thesection.\arabic{equation}},
  resume%
}

A central theme in extremal combinatorics is to identify small ``patterns'' whose frequency in a large combinatorial object is extremized by a random object of the same type. An archetypal example is Sidorenko's Conjecture~\cite{Sidorenko93,ConlonFoxSudakov10,ConlonLee21,Hatami10} that, for any bipartite graph $H$, the homomorphism density of $H$ in a graph $G$ with edge density $p=2|E(G)|/|V(G)|^2$ is at least the expected density in a large binomial random graph of edge density $p$. 

In this paper, we focus on instances of this phenomenon in the setting of tournaments. An \emph{oriented graph} is a pair $\vec{H}=(V(\vec{H}),A(\vec{H}))$ where $V(\vec{H})$ is a set of \emph{vertices} and $A(\vec{H})\subseteq V(\vec{H})\times V(\vec{H})$ is a set of \emph{arcs} containing no self-loops and at most one arc between any pair of distinct vertices. A \emph{tournament} is an oriented graph in which there is exactly one arc between every pair of distinct vertices. A \emph{homomorphism} from an oriented graph $\vec{H}$ to an oriented graph $\vec{G}$ is a function $f:V(\vec{H})\to V(\vec{G})$ with the property that $(f(u),f(v))\in A(\vec{G})$ for all $(u,v)\in A(\vec{H})$. We write $f:\vec{H}\to \vec{G}$ to indicate that $f$ is a homomorphism from $\vec{H}$ to $\vec{G}$. The number of homomorphisms from $\vec{H}$ to $\vec{G}$ is denoted $\hom(\vec{H},\vec{G})$ and the \emph{homomorphism density} of $\vec{H}$ in $\vec{G}$ is
\[t(\vec{H},\vec{G}):=\frac{\hom(\vec{H},\vec{G})}{v(\vec{G})^{v(\vec{H})}}\]
where $v(\vec{F}):=|V(\vec{F})|$ for any oriented graph $\vec{F}$. Inspired by Sidorenko's Conjecture, there are several recent papers on classifying oriented graphs $\vec{H}$ such that, among all large tournaments $\vec{T}$, the quantity $t(\vec{H},\vec{T})$ is asymptotically maximized or minimized by taking $\vec{T}$ to be a uniformly random tournament~\cite{BucicLongShapiraSudakov21,Hancock+23,CoreglianoParenteSato19,CoreglianoRazborov17,SahSawhneyZhao23,GrzesikKralLovaszVolec23,HeManiNieTungWei25+,NoelRanganathanSimbaqueba26,FoxHimwichManiZhou24+,KalyanasundaramShapira13,ZhaoZhou20,ChenClemenNoelSharfenberg26+,Kral+26+}; see also~\cite{Griffiths13,BitontiHoganNoelTsarev26+,FoxHimwichManiZhou25}. The following definitions make this more precise. 

\begin{defn}
An oriented graph $\vec{H}$ is \emph{tournament anti-Sidorenko} if $t(\vec{H},\vec{T})\leq (1/2)^{a(\vec{H})}$ for every tournament $\vec{T}$, where $a(\vec{H}):=|A(\vec{H})|$.
\end{defn}

\begin{defn}
An oriented graph $\vec{H}$ is \emph{tournament Sidorenko} if $t(\vec{H},\vec{T})\geq (1-o(1))(1/2)^{a(\vec{H})}$ for every tournament $\vec{T}$, where the $o(1)$ term tends to zero as $v(\vec{T})$ tends to infinity.
\end{defn}

In this paper, we solve several open problems on tournament anti-Sidorenko orientations of paths and spiders. Given an oriented path $\vec{P}$, a \emph{block} of $\vec{P}$ is a maximal subpath of $\vec{P}$ in which all arcs are oriented in the same direction; see Figure~\ref{fig:blocks}. A block of $\vec{P}$ is \emph{internal} if it does not contain a leaf of $\vec{P}$. Sah, Sawhney and Zhao~\cite[Theorem~1]{SahSawhneyZhao23} proved that every oriented path with one block (i.e. every \emph{directed path}) is tournament anti-Sidorenko. He, Mani, Nie, Tung and Wei~\cite[Conjecture~2.3]{HeManiNieTungWei25+} conjectured that every oriented path with exactly two blocks and at least three arcs is tournament anti-Sidorenko. We prove this. 

\begin{figure}[htbp]
\centering
\begin{tikzpicture}[scale=1]

\node[openvertex] (v1) at (0,0) {};
\node[vertex]     (v2) at (1,0) {};
\node[openvertex] (v3) at (2,0) {};
\node[vertex]     (v4) at (3,0) {};
\node[vertex]     (v5) at (4,0) {};
\node[vertex]     (v6) at (5,0) {};
\node[openvertex] (v7) at (6,0) {};
\node[vertex]     (v8) at (7,0) {};
\node[vertex]     (v9) at (8,0) {};
\node[openvertex] (v10) at (9,0) {};

\node[lbl, above=0pt] at (v1) {$v_1$};
\node[lbl, above=0pt] at (v2) {$v_2$};
\node[lbl, above=0pt] at (v3) {$v_3$};
\node[lbl, above=0pt] at (v4) {$v_4$};
\node[lbl, above=0pt] at (v5) {$v_5$};
\node[lbl, above=0pt] at (v6) {$v_6$};
\node[lbl, above=0pt] at (v7) {$v_7$};
\node[lbl, above=0pt] at (v8) {$v_8$};
\node[lbl, above=0pt] at (v9) {$v_9$};
\node[lbl, above=0pt] at (v10) {$v_{10}$};

\draw[arc] (v1) -- (v2);
\draw[arc] (v2) -- (v3);

\draw[arc] (v4) -- (v3);
\draw[arc] (v5) -- (v4);
\draw[arc] (v6) -- (v5);
\draw[arc] (v7) -- (v6);

\draw[arc] (v7) -- (v8);
\draw[arc] (v8) -- (v9);
\draw[arc] (v9) -- (v10);

\end{tikzpicture}
\caption{
An oriented path with three blocks of lengths $2$, $4$, and $3$. White nodes represent the source and sink vertices, which are the ends of the blocks.
}
\label{fig:blocks}
\end{figure}

\begin{thm}
\label{th:2blocks}
Every oriented path with exactly two blocks and at least three arcs is tournament anti-Sidorenko.
\end{thm}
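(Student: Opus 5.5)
By reversing every arc (which preserves homomorphism densities in tournaments, since the reverse of a tournament is a tournament), I may assume the unique non-leaf source/sink is a sink $v$. Then $\vec P$ consists of two directed paths of lengths $a,b\ge 1$, $a+b\ge 3$, both ending at $v$. Let $A$ be the adjacency matrix of $\vec T$ on $n$ vertices and $\mathbf{1}$ the all-ones vector. The vector $u_k:=(A^T)^k\mathbf 1$ counts directed walks of length $k$ ending at each vertex, so
\[
\hom(\vec P,\vec T)=\langle u_a,u_b\rangle=\mathbf 1^TA^a(A^T)^b\mathbf 1 .
\]
The goal is $\langle u_a,u_b\rangle\le n^{a+b+1}/2^{a+b}$.

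\textbf{Normal matrix reduction.} The key structural fact is $A+A^T=J-I$. I would write $A=\tfrac12 J+B$ with $B:=-\tfrac12 I+K$, where $K=\tfrac12(A-A^T)$ is skew-symmetric. Then $B$ is normal, its eigenvalues are $-\tfrac12+i\lambda_j$ with $\lambda_j$ real, and $B^T=-\tfrac12 I-K$. Substituting $A^T=\tfrac12 J+B^T$ and expanding $(A^T)^k\mathbf 1$, each time a $J$ appears it produces a scalar multiple of $\mathbf 1$. Hence
\[
u_k=\sum_{j} c_{k,j}\,(B^T)^j\mathbf 1 ,
\]
where the coefficients $c_{k,j}$ are polynomials in the quantities $\beta_m:=\mathbf 1^T(B^T)^m\mathbf 1$. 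Diagonalise $K$ in an orthonormal (complex) eigenbasis and write $\mathbf 1=\sum_j w_j$ with $\sum_j\|w_j\|^2=n$. Then everything becomes a function of the spectral measure $\mu$ that puts mass $\|w_j\|^2$ on $\lambda_j$:
\[
\beta_m=\int(-\tfrac12-i\lambda)^m\,d\mu ,
\]
and in particular $\beta_0=n$. Also $\mathbf 1^TK\mathbf 1=0$, so $\int\lambda\,d\mu=0$. The main term, coming from taking $J/2$ at every step, is exactly $n^{a+b+1}/2^{a+b}$. The task is to show that the sum of the correction terms is nonpositive.

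\textbf{Case $a,b\ge2$.} Here I would first apply Cauchy--Schwarz, $\langle u_a,u_b\rangle\le\|u_a\|\|u_b\|$, reducing to the symmetric two-block path with both blocks of length $k\ge2$. For $\|u_k\|^2$ I would expand in the spectral variables and aim to show that the correction is a negative combination of even moments $\int|{-\tfrac12-i\lambda}|^{2m}d\mu-\text{const}$, plus cross terms. The cross terms I would bound using $|\beta_m|\le\int|{-\tfrac12-i\lambda}|^m d\mu$ together with $\int\lambda\,d\mu=0$. This is where $k\ge2$ must matter: for $k=1$ (the in-star with two arcs) the inequality is false, because $\sum_v d^-(v)^2\ge n^3/4$. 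I therefore expect the sign to come from the factor $(-\tfrac12)^k$ attached to the squared terms once $k\ge2$.

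\textbf{Case $\min(a,b)=1$.} Cauchy--Schwarz is useless here because it reduces to the failing $k=1$ case, so this case must be handled directly. Say $b=1$. Then $u_1=(n-1)\mathbf 1-A\mathbf 1$, and
\[
\hom(\vec P,\vec T)=(n-1)\,\hom(\vec P_a,\vec T)-\hom(\vec P_{a+1},\vec T),
\]
where $\vec P_\ell$ denotes the directed path with $\ell$ arcs. I would feed both terms through the same spectral expansion. Since $\mathbf 1^TA^\ell\mathbf 1$ is a polynomial in $\beta_0,\dots,\beta_\ell$, the difference becomes an explicit expression in $\mu$. I would then verify directly that this expression is at most $n^{a+2}/2^{a+1}$, using $a\ge2$, $\int\lambda\,d\mu=0$ and $\mu(\mathbb R)=n$. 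The $n-1$ versus $n$ discrepancy contributes a favourable $-\hom(\vec P_a,\vec T)$ term.

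\textbf{Main obstacle.} I expect the hardest step to be controlling the mixed terms in these expansions, which involve products $\beta_{m_1}\beta_{m_2}\cdots$ of complex moments, uniformly in $a$ and $b$. If a direct sign argument fails, the fallback is an induction on the block lengths using the identity $A^T=J-I-A$ to peel one arc at a time. Each step would relate $\vec P$ to shorter two-block paths and to directed paths, and the directed paths are tournament anti-Sidorenko by the result of Sah, Sawhney and Zhao~\cite[Theorem~1]{SahSawhneyZhao23}. The base cases $(a,b)\in\{(1,2),(2,2)\}$ would be checked by hand in the spectral variables.
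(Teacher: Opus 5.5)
You have a genuine gap. Your Cauchy--Schwarz reduction of the case $a,b\ge2$ to the paths $\vec P_{k,k}$ is correct, and so is your identity $\hom(\vec P_{a,1},\vec T)=(n-1)\hom(\vec P_a,\vec T)-\hom(\vec P_{a+1},\vec T)$. After these reductions, however, everything hard is only announced (``aim to show'', ``verify directly''): that $\vec P_{k,k}$ is tournament anti-Sidorenko for all $k\ge2$, and that $\vec P_{1,b}$ is for all $b\ge2$. The constraints you list cannot do this job. Your $\mu$ is automatically symmetric, since the spectrum of the real skew-symmetric $K$ is $\{\pm i\lambda_j\}$ and $\mathbf 1$ is real, so $\int\lambda\,d\mu=0$ carries no information. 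With only total mass $n$ and mean zero, the desired inequality is false at the level of measures. Already for $k=2$ your expansion gives exactly
\[\hom(\vec P_{2,2},\vec T)=\frac{n(n-1)^4}{16}-\frac{n^2-2}{4}\int\lambda^2\,d\mu+\int\lambda^4\,d\mu,\]
and this exceeds $n^5/16$ for measures of mass $n$ concentrated at large $|\lambda|$. The indispensable extra input is a bound on the support of $\mu$, i.e.\ on the eigenvalues of $K$ (one has $|\lambda_j|\le\frac12\cot\frac{\pi}{2n}\le n/\pi$). This bound is exactly what lies behind the interval $[0,1/\pi^2]$ in Lemma~\ref{lem:polynomial}, and you never mention it. Even granting it, for general $k$ the expansion of $\|u_k\|^2$ contains products of moments of many orders with $k$-dependent coefficients. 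You give no mechanism showing their sum is nonpositive uniformly in $k$; the hope that the sign comes from $(-\tfrac12)^k$ is not an argument. The paper does not go this way for general $k$. It obtains all $\vec P_{a,b}$ with $a,b\ge2$, in particular $\vec P_{k,k}$, from entropy certificates via Theorem~\ref{th:0mod4}, Lemma~\ref{lem:entropyMain} and Constructions~\ref{constr:D4*} and~\ref{constr:Dk*}.

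For $\min(a,b)=1$, the idea you are missing is that Cauchy--Schwarz is not useless if you split at a different vertex. Splitting $\vec P_{1,k}$ at $v_4$ rather than at the sink gives $t(\vec P_{1,k},\vec T)^2\le t(\vec P_{1,3,3,1},\vec T)\,t(\vec P_{k-3,k-3},\vec T)$ (Lemma~\ref{lem:P1331reduction}). So every $k\ge5$ reduces to the $\vec P_{j,j}$ case plus the single fixed path $\vec P_{1,3,3,1}$. For that path a finite algebraic expansion together with Lemmas~\ref{lem:polynomial}, \ref{lem:P11P22} and~\ref{lem:P11upper} suffices (Proposition~\ref{prop:P1331}). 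The cases $k=2,3,4$ are handled separately by Propositions~\ref{prop:P12}, \ref{prop:P13} and~\ref{prop:P14}.

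Your fallback induction fails for a structural reason. Each identity obtained from $A^T=J-I-A$ expresses the target count as a quantity you can bound above minus other path counts, for instance
\[\hom(\vec P_{a,b},\vec T)=\hom(\vec P_a,\vec T)\hom(\vec P_{b-1},\vec T)-\hom(\vec P_{a,b-1},\vec T)-\hom(\vec P_{a+1,b-1},\vec T).\]
An upper bound therefore requires a lower bound, i.e.\ a tournament Sidorenko property, for the subtracted path. The theorem of Sah, Sawhney and Zhao gives only upper bounds. Moreover, $\vec P_{a+1,b-1}$ with $b\ge3$ is not tournament Sidorenko: transitive tournaments give density $\frac{1}{(a+1)!\,(b-1)!\,(a+b+1)}+o(1)<2^{-(a+b)}$. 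This is precisely why the paper's proof for $\vec P_{1,4}$ needs the separate fact that $\vec P_{1,2,1,1}$ is tournament Sidorenko. (A minor slip: $\sum_v d^-(v)^2\ge n^3/4$ is false, since regular tournaments give $n(n-1)^2/4$. The correct reason $\vec P_{1,1}$ fails to be anti-Sidorenko is that, e.g., transitive tournaments give about $n^3/3$.)
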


We also obtain a general result for paths with an arbitrary number of blocks. 

\begin{thm}
\label{th:0mod4}
If $\vec{P}$ is an oriented path in which every block of $\vec{P}$ has length at least two and every internal block has length divisible by four, then $\vec{P}$ is tournament anti-Sidorenko. 
\end{thm}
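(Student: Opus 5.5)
We work in the language of the adjacency matrix. For a tournament $\vec{T}$ on $n$ vertices, let $A$ be its adjacency matrix and write $A=\frac12(J-I)+\frac12 S$, where $S=A-A^{\top}$ is skew-symmetric with off-diagonal entries $\pm1$. Let $\vec{P}$ have blocks of lengths $\ell_1,\dots,\ell_k$ with alternating directions. Then $\hom(\vec{P},\vec{T})=\mathbf{1}^{\top}A^{\ell_1}(A^{\top})^{\ell_2}A^{\ell_3}\cdots\mathbf{1}$. Since $A^{\top}=\frac12(J-I)-\frac12 S$, we can replace $A$ by $N:=\frac12(J+S)$, up to diagonal corrections of lower order. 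The same substitution gives $A^{\top}\approx\frac12(J-S)=N^{\top}$.

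The plan has four steps.

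\begin{enumerate}
\item \emph{Sign flip on internal blocks.} Because $S$ is skew-symmetric, $(J-S)^{m}$ is the transpose of $(J+S)^m$. Expanding the product $\prod_i (J\pm S)^{\ell_i}$, each monomial contains runs of $S$ factors separated by $J=\mathbf{1}\mathbf{1}^{\top}$. A run that falls entirely inside an internal block of length $\ell\equiv 0\pmod 4$ with backward orientation contributes $(-S)^{j}$. The key identity is
\[
\mathbf{1}^{\top}(J-S)^{\ell}\mathbf{1}=\mathbf{1}^{\top}(J+S)^{\ell}\mathbf{1},
\]
which holds by transposition. I would use a refinement of it that handles the runs crossing block boundaries. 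The expected mechanism is that $\mathbf{1}^{\top}S^{j}\mathbf{1}=0$ for odd $j$, so only even runs survive. The divisibility by four should then make the signs of the surviving cross terms work out, so that the whole expansion is dominated by the corresponding expansion for a directed path (or for a product of directed paths).
\item \emph{Reduction to the directed-path theorem.} Sah--Sawhney--Zhao~\cite[Theorem~1]{SahSawhneyZhao23} proved that every directed path is tournament anti-Sidorenko. This is the inequality $\mathbf{1}^{\top}A^{m}\mathbf{1}\le n^{m+1}2^{-m}$, and it is equivalent to sign and size constraints on the quantities $\mathbf{1}^{\top}S^{2j}\mathbf{1}=-\|S^{j}\mathbf{1}\|^2$ (up to sign). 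I would use the spectral decomposition of $S$: it has purely imaginary eigenvalues $\pm i\lambda$, so $S^{2}$ is negative semidefinite and $S^{4}$ is positive semidefinite. Working in the eigenbasis, the homomorphism count becomes
\[
2^{-a(\vec{P})}\Bigl(n^{v-1}+\sum_{\text{monomials}}\langle\cdot\rangle\Bigr).
\]
The task is to show that the sum over monomials is nonpositive. Runs of length divisible by four give $S^{4k}\succeq 0$. Combined with the surrounding $J$ factors, this should let me write the monomial sum as $-\sum c\,\|\text{vector}\|^2$ with $c\ge 0$.
\item \emph{The end blocks.} The requirement that end blocks have length at least two is needed so that the boundary terms have the correct sign. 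For example, a single pendant arc would produce a term $\mathbf{1}^{\top}S\,x$ of indefinite sign. With length at least two, the leading correction is $\mathbf{1}^{\top}S^{2}\mathbf{1}\le 0$, which has the right sign.
\item \emph{Lower-order terms.} The diagonal corrections from $I$ should only help, or should be absorbed, as in the paper's directed case. I would handle them via a blow-up argument: $t(\vec{H},\vec{T})$ is preserved under blow-ups in the limit, so it suffices to prove the inequality for the tournament limit (a skew kernel), where the $I$ term vanishes.
\end{enumerate}

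I expect the main obstacle to be the cross terms in Step 2, where $S$-runs straddle a $J$ inserted in the middle of a block, in different blocks. These give products $(\mathbf{1}^{\top}S^{a}\mathbf{1})(\mathbf{1}^{\top}S^{b}\mathbf{1})\cdots$ with mixed signs. I would first try to group the terms by the positions of the $J$ factors and show that each group factors as a product of scalars $\mathbf{1}^{\top}S^{a}\mathbf{1}$ whose signs are determined by $a \bmod 4$. Choosing the internal block lengths $\equiv 0 \pmod 4$ aligns the parities so that these products are dominated, via AM--GM or Cauchy--Schwarz on $\|S^{j}\mathbf{1}\|$, by the directed-path expansion, which is already known to be nonpositive.
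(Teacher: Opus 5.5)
Your Steps 1--2 restate the theorem rather than prove it: in the limit, nonpositivity of the monomial sum is exactly the claim. The sign mechanism you then rely on does not exist. Write $s_j:=\mathbf{1}^{\top}S^{j}\mathbf{1}$, so $s_j=0$ for odd $j$ and $s_{2m}=(-1)^m\|S^m\mathbf{1}\|^2$.

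\begin{itemize}
\item Runs of length $\equiv 0\pmod 4$ contribute $+\|S^{2m}\mathbf{1}\|^2\ge 0$. Positive semidefiniteness of $S^4$ works \emph{against} you, not for you.
\item A run of length two straddling a forward/backward boundary contributes $S(-S)$, i.e.\ $-s_2=\|S\mathbf{1}\|^2\ge 0$. Such straddling runs occur whatever the block lengths are mod $4$; this is exactly why $\vec{P}_{1,1}$ is tournament Sidorenko.
\item Two separate length-two runs give $s_2^2\ge 0$.
\end{itemize}

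So no grouping by $J$-positions makes every group nonpositive. The positive terms must be beaten quantitatively by the negative ones, and that needs spectral information about $S$. Lemma~\ref{lem:polynomial} provides this only for linear combinations of single runs $t(\vec{P}_{k,k},U)$. Your proposal gives no such inequality for an arbitrary block sequence, and no indication of where ``divisible by four'' would enter.

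Your fallback, that the expansion is dominated by the directed-path expansion so that Sah--Sawhney--Zhao applies, is false already for $\vec{P}_{2,2}$, which satisfies the hypotheses. Up to $O(n^4)$,
$16\hom(\vec{P}_{2,2},\vec{T})=n^5+n^2s_2+s_4$, whereas $16\hom(\vec{P}_4,\vec{T})=n^5+3n^2s_2+s_4$.
Since $s_2\le 0$, this gives $\hom(\vec{P}_{2,2},\vec{T})\ge \hom(\vec{P}_4,\vec{T})-O(n^4)$. The inequality is strict in the transitive tournament, where the densities tend to $1/20$ and $1/120$ respectively.

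Each block sequence would therefore need its own proof that a new polynomial in the moments $\|S^m\mathbf{1}\|^2$ is nonpositive. The paper uses this expansion only for $\vec{P}_{2,2}$ and $\vec{P}_{1,3,3,1}$ (Propositions~\ref{prop:P22} and~\ref{prop:P1331}). Even the latter needs the spectral lemmas of Grzesik et al.\ plus a hand-tuned splitting of the polynomial.

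The paper's proof of this theorem avoids the expansion entirely and uses entropy certificates (sandwiches). It builds a $\vec{P}_4$-sandwich with cover $1/2$ at both endpoints (Construction~\ref{constr:D4*}) and $\vec{P}_k$-sandwiches for $k\in\{2,3,5\}$ with cover $1/2$ at one endpoint (Construction~\ref{constr:Dk*}), and glues them along the path. The mod-$4$ hypothesis is what lets the internal blocks be tiled by the $\vec{P}_4$ gadget, so every junction vertex receives total cover at most $1$. Lemma~\ref{lem:entropyMain} then finishes the proof.
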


To our knowledge, the above theorem provides the first family of tournament anti-Sidorenko oriented paths of exponential size in the number of arcs. He, Mani, Nie, Tung and Wei~\cite[Conjecture~1.9]{HeManiNieTungWei25+} conjectured that a $1/2-o(1)$ proportion of all oriented paths with $k$ arcs are tournament anti-Sidorenko. While we disprove this in a forthcoming paper with Sharfenberg~\cite{ChenClemenNoelSharfenberg26+}, Theorem~\ref{th:0mod4} can be seen as evidence supporting the weaker statement that a positive proportion of all oriented paths with $k$ arcs are tournament anti-Sidorenko. 

An intriguing conjecture of Fox, Himwich, Mani and Zhou~\cite[Conjecture~1.12]{FoxHimwichManiZhou24+} is that every undirected tree admits a tournament anti-Sidorenko orientation. It is known to hold for a handful of trees, including paths~\cite[Theorem~1]{SahSawhneyZhao23}, trees with at most one vertex of even degree~\cite[Proposition~1.13]{FoxHimwichManiZhou24+}, and caterpillars~\cite[Theorem~1.4]{HeManiNieTungWei25+}. For $a_1,\dots,a_k\geq1$, define the \emph{$(a_1,\dots,a_k)$-spider}, or simply a \emph{$k$-spider}, to be the undirected graph obtained by taking disjoint paths of lengths $a_1,\dots,a_k$ and identifying all paths on one of their endpoints; see Figure~\ref{fig:spiders}. He, Mani, Nie, Tung and Wei~\cite[Problem~1.6]{HeManiNieTungWei25+} identified the $(2,3,4)$-spider as being an interesting small case for which~\cite[Conjecture~1.12]{FoxHimwichManiZhou24+} is open. We not only settle this case, but we obtain tournament anti-Sidorenko orientations of $3$-spiders in full generality.\footnote{Problem~1.6 from~\cite{HeManiNieTungWei25+} was also solved independently by another group; see the remarks in Section~\ref{sec:concl}.}

\begin{figure}[htbp]
\centering

\begin{minipage}{0.42\textwidth}
\centering
\begin{tikzpicture}[x=1.05cm, y=0.7cm, 
    vertex/.style={circle, draw, fill=black, inner sep=1.5pt}
]

\node[vertex] (c) at (0,0) {};

\node[vertex] (a1) at (0,-1.0) {};
\node[vertex] (a2) at (0,-2.1) {};

\node[vertex] (b1) at (-0.9,-0.75) {};
\node[vertex] (b2) at (-1.5,-1.75) {};
\node[vertex] (b3) at (-1.9,-2.9) {};

\node[vertex] (d1) at (0.9,-0.75) {};
\node[vertex] (d2) at (1.5,-1.75) {};
\node[vertex] (d3) at (1.9,-2.9) {};
\node[vertex] (d4) at (2.15,-4.05) {};

\draw (c) -- (a1) -- (a2);
\draw (c) -- (b1) -- (b2) -- (b3);
\draw (c) -- (d1) -- (d2) -- (d3) -- (d4);

\end{tikzpicture}

\end{minipage}
\hfill
\begin{minipage}{0.50\textwidth}
\centering
\begin{tikzpicture}[
    scale=1,
    line cap=round,
    line join=round,
    v/.style={circle, draw, fill=black, inner sep=1.3pt}
]

\definecolor{spiderbrown}{RGB}{155,85,35}
\definecolor{spiderlight}{RGB}{185,115,60}
\definecolor{eyeblue}{RGB}{40,140,220}

\coordinate (c) at (0,-0.03);


\node[v] (L1) at (-1.1,0.05) {};
\draw[spiderbrown, line width=1.2pt]
  (c) -- (-0.55,0.10) -- (L1);

\node[v] (L3a) at (-0.9,-0.55) {};
\node[v] (L3b) at (-1.6,-1.0) {};
\draw[spiderbrown, line width=1.2pt]
  (c) -- (-0.45,-0.10) -- (L3a) -- (L3b);

\node[v] (L5a) at (-1.15,0.25) {};
\node[v] (L5b) at (-1.9,0.10) {};
\node[v] (L5c) at (-2.5,-0.45) {};
\draw[spiderbrown, line width=1.2pt]
  (c) -- (-0.5,0.15) -- (L5a) -- (L5b) -- (L5c);

\node[v] (L7a) at (-1.15,-0.25) {};
\node[v] (L7b) at (-1.95,-0.55) {};
\node[v] (L7c) at (-2.6,-1.15) {};
\node[v] (L7d) at (-3.0,-1.9) {};
\draw[spiderbrown, line width=1.2pt]
  (c) -- (-0.5,-0.05) -- (L7a) -- (L7b) -- (L7c) -- (L7d);


\node[v] (R1) at (1.1,0.05) {};
\draw[spiderbrown, line width=1.2pt]
  (c) -- (0.55,0.10) -- (R1);

\node[v] (R3a) at (0.9,-0.55) {};
\node[v] (R3b) at (1.6,-1.0) {};
\draw[spiderbrown, line width=1.2pt]
  (c) -- (0.45,-0.10) -- (R3a) -- (R3b);

\node[v] (R5a) at (1.15,0.25) {};
\node[v] (R5b) at (1.9,0.10) {};
\node[v] (R5c) at (2.5,-0.45) {};
\draw[spiderbrown, line width=1.2pt]
  (c) -- (0.5,0.15) -- (R5a) -- (R5b) -- (R5c);

\node[v] (R7a) at (1.15,-0.25) {};
\node[v] (R7b) at (1.95,-0.55) {};
\node[v] (R7c) at (2.6,-1.15) {};
\node[v] (R7d) at (3.0,-1.9) {};
\draw[spiderbrown, line width=1.2pt]
  (c) -- (0.5,-0.05) -- (R7a) -- (R7b) -- (R7c) -- (R7d);


\filldraw[fill=spiderbrown, draw=black] (0,0.33) ellipse (0.22 and 0.28);
\filldraw[fill=spiderlight, draw=black] (0,-0.03) ellipse (0.17 and 0.16);

\draw[line width=0.45pt]
  (-0.10,0.060) .. controls (-0.075,0.075) and (-0.045,0.075) .. (-0.02,0.060);
\draw[line width=0.45pt]
  (0.02,0.060) .. controls (0.045,0.075) and (0.075,0.075) .. (0.10,0.060);

\filldraw[fill=white, draw=black] (-0.060,0.010) circle (0.036);
\filldraw[fill=white, draw=black] ( 0.060,0.010) circle (0.036);
\filldraw[fill=eyeblue, draw=black] (-0.060,0.000) circle (0.020);
\filldraw[fill=eyeblue, draw=black] ( 0.060,0.000) circle (0.020);

\fill[white] (-0.072,0.022) circle (0.007);
\fill[white] ( 0.048,0.022) circle (0.007);

\draw[line width=0.55pt]
  (-0.060,-0.085) .. controls (0,-0.125) .. (0.060,-0.085);
\draw[line width=0.35pt]
  (-0.022,-0.102) -- (0.022,-0.102);

\end{tikzpicture}
\end{minipage}

\caption{The $(2,3,4)$-spider and the $(1,1,2,2,3,3,4,4)$-spider.}
\label{fig:spiders}
\end{figure}

\begin{thm}
\label{th:3spider}
Every $3$-spider admits a tournament anti-Sidorenko orientation.  
\end{thm}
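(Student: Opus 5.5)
We would prove Theorem~\ref{th:3spider} by reducing tournament homomorphism counts of a spider to inner products of functions on $V(\vec{T})$ and then using spectral/Cauchy--Schwarz inequalities. Write $\vec{T}$ as an $n\times n$ matrix $A$ with $A+A^{\top}=J-I$, and set $A=\frac12(J-I)+S$ with $S$ skew-symmetric. For an oriented leg $\vec{L}$ rooted at the centre $c$, let $f_{\vec{L}}(x)$ be the number of homomorphisms of the leg sending $c$ to $x$, normalised by $n^{a(\vec{L})}$. The density of the spider with legs $\vec{L}_1,\vec{L}_2,\vec{L}_3$ is then
\[
t=\frac1n\sum_x f_{\vec{L}_1}(x)f_{\vec{L}_2}(x)f_{\vec{L}_3}(x).
\]
The plan is to orient the legs so that this triple product can be bounded by $2^{-a_1-a_2-a_3}$.

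\textbf{Choice of orientation.} Orient one leg, say the longest of length $a_3$, as an in-directed path toward $c$, and a second leg as an out-directed path away from $c$. The first leg gets a two-block orientation, or an orientation of the type in Theorem~\ref{th:0mod4}, chosen so that the whole spider contains, through $c$, an oriented path covered by Theorem~\ref{th:2blocks}, Theorem~\ref{th:0mod4}, or the Sah--Sawhney--Zhao theorem.

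The key structural observation is as follows. For a directed in-leg of length $k$ ending at $c$, we have $f(x)=(\mathbf 1^{\top}A^k)_x/n^k$; for an out-leg, $f(x)=(A^k\mathbf 1)_x/n^k$. Since $A^{\top}=J-I-A$, the in-degree profile is, up to lower-order terms, $1$ minus the out-degree profile. This suggests pairing an in-leg with an out-leg, so that products of the form $g(x)(1-g(x))$ appear, and these are at most $1/4$ pointwise. Concretely, with $a_1=1$ arranged suitably, $f_{\vec{L}_1}(x)=d^+(x)/n$ and $f_{\vec{L}_2}(x)\approx 1-d^+(x)/n$ combine as $p(1-p)\le 1/4$ when both legs have length one. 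For longer legs, we would exploit the following fact: if legs $1$ and $2$ meet at $c$ with opposite orientations, their union is a path $\vec{P}$ through $c$, and $t(\text{spider})=\frac1n\sum_x h_{\vec{P}}(x)f_{\vec{L}_3}(x)$, where $h_{\vec{P}}(x)$ is the rooted density of $\vec{P}$ at the internal vertex $c$.

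\textbf{Main step.} We need a rooted strengthening of the path results. We would prove a weighted inequality
\[
\frac1n\sum_x h_{\vec{P}}(x)\,w(x)\le 2^{-a(\vec{P})}\cdot\frac1n\sum_x w(x)\cdot(\text{correction}),
\]
with $w=f_{\vec{L}_3}$ for a directed leg. The approach is to expand everything in the skew part $S$: every term of odd total degree in $S$ is a trace-like expression, and we must show each such term is non-positive or cancels. The factor $\frac12(J-I)$ contributes exactly $2^{-a}$ in the main term. Terms linear in $S$ vanish by skew-symmetry when legs are paired appropriately. Quadratic terms take the form $\mathbf 1^{\top}S^{i}\,\mathrm{diag}\,S^{j}\mathbf 1$, and we would choose orientations making them equal to $-\|S^{m}\mathbf 1\|^2$ or similar negative squares. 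Choosing leg orientations so that the leg lengths' parities align, for instance by flipping the last arc of a leg as in Theorem~\ref{th:0mod4}, is the tool for making signs come out negative.

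\textbf{Expected obstacle.} The main obstacle is controlling the higher-order terms in $S$ (degree $\ge3$), which involve genuinely three-way products at $c$ and are not obviously signed. Our first attempt would be an inductive Hölder/Cauchy--Schwarz argument on the longest leg. We would bound $\sum_x f_1f_2f_3\le \|f_1f_2\|_2\|f_3\|_2$, and use the two-leg path (Theorem~\ref{th:2blocks}) together with the directed-path bound to estimate each factor, choosing orientations so that both resulting path densities are anti-Sidorenko with matching exponents. If the exponents fail to match, we would split into cases according to the parities of $a_1,a_2,a_3$ and handle short legs ($a_i\le2$) by direct degree computations.
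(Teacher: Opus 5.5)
Your proposal has a genuine gap: none of its mechanisms closes the argument.

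\textbf{The complementarity observation.} It holds only for legs of length one. For a directed leg of length $k\geq 2$, the rooted densities $\mathbf{1}^{\top}A^k$ and $A^k\mathbf{1}$ are not complementary, because $(A^{\top})^k=(J-I-A)^k$ has many mixed terms. For example, take $B\to x\to C\to B$ with $|B|=|C|=n/2$ and random arcs inside $B$ and $C$. For legs of length $2$, both rooted densities at $x$ are about $3/8$, so their product is about $9/64>1/16$. A pointwise AM--GM bound is available only when the two legs are a single attachment arc in each direction followed by the \emph{same} rooted graph, which is exactly Lemma~\ref{lem:inOut}.

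\textbf{The main step.} It is never actually formulated; the ``correction'' factor is left unspecified. The terms of degree at least $3$ in $S$, which you flag as the obstacle, are the whole difficulty. Signing the linear and quadratic terms alone gives no global inequality.

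\textbf{The Cauchy--Schwarz fallback.} It does not reduce to paths. The quantity $\frac1n\sum_x f_{\vec{L}_1}(x)^2f_{\vec{L}_2}(x)^2$ is the density of the $4$-spider with legs $\vec{L}_1,\vec{L}_2,\vec{L}_1,\vec{L}_2$, not of the path $\vec{L}_1\cup\vec{L}_2$. Three-factor H\"older likewise produces spiders with repeated legs. Only $\frac1n\sum_x f_{\vec{L}_3}(x)^2=t(\vec{P}_{a_3,a_3},\vec{T})$ is a path density.

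\textbf{The orientation.} You never fix one, and this matters. Orienting every arc of the $(1,1,1)$-spider toward the centre gives density $\frac1n\sum_x (d^-(x)/n)^3$, which is about $1/4>1/8$ for a transitive tournament.

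\textbf{The missing idea.} You need a way to combine a certificate for the path formed by two legs with one for the third leg at the centre. The inner product of rooted densities does not factor. The paper avoids it with entropy-based sandwiches. Lemma~\ref{lem:entropyMain} converts a fractional covering of $\vec{H}$ by anti-Sidorenko components into the anti-Sidorenko property. Each component is attached by at most one arc, and attached components are paired with isomorphic copies attached by the reversed arc. The proof plays the entropy lower bound $t(\vec{H},\vec{T})^{q+1}\leq t(\vec{F},\vec{T})$ against repeated applications of Lemma~\ref{lem:inOut}.

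Because the certificate is a covering, gluing is local:
\begin{enumerate}[(i)]
\item Legs of length $1$ are disposed of by the caterpillar theorem of He et al.
\item Pigeonhole on residues mod $4$ selects legs $a,b$ for which Lemma~\ref{lem:cover1/2} gives a sandwich for some orientation $\vec{P}_{k-\ell,\ell}$ of the $(a+b)$-path with $\cov(v_a)\leq 1/2$.
\item The third leg receives a directed-path sandwich with $\cov(v_0)\leq 1/2$, from Constructions~\ref{constr:Dk*} and~\ref{constr:D4*}.
\item Identifying these two vertices keeps every vertex cover at most $1$ and every arc cover equal to $1$.
\end{enumerate}
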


\subsection{Outline of the Paper}

The proofs of our main results are built up from many auxiliary statements with complex relationships between one another. The purpose of this subsection is to provide a road map for the rest of the paper and to elucidate the main logical dependencies among the results.  We use the following convention. The three main results stated in the introduction are referred to as \emph{theorems}. Results establishing that a particular oriented graph, or each member of a small explicit family of oriented graphs, is tournament Sidorenko or tournament anti-Sidorenko are stated as \emph{propositions}. General tools and reduction principles are stated as \emph{lemmas}, and results establishing the existence of specific structures are stated as \emph{constructions}. The reader can compare, for example, Theorem~\ref{th:2blocks}, Proposition~\ref{prop:P12}, Lemma~\ref{lem:reverseAllArcs} and Construction~\ref{constr:D4*}. Table~\ref{tab:dependencies} summarizes the propositions and theorems proved in the paper, the sections in which these results are proved, and the results that are used in their proofs.  

\begin{table}[htbp]
\centering
\small
\begin{tabular}{l|l|l|l|l|l}
\textbf{Thm/Prop} & \textbf{Oriented graph(s)} & \textbf{Section} & \textbf{TS} & \textbf{TAS} & \textbf{Main inputs}\\
\hline
\ref{prop:P12} & $\vec P_{1,2}$ &\ref{sec:tricks}& $\checkmark$ & $\checkmark$ & \ref{lem:edgeFlip},~\ref{lem:union}\\[1mm]
\ref{prop:P1211} & $\vec P_{1,2,1,1}$ &\ref{sec:tricks}& $\checkmark$ &  & \ref{lem:edgeFlip},~\ref{lem:union}\\[1mm]
\ref{prop:P14} & $\vec P_{1,4}$ &\ref{sec:tricks}&  & $\checkmark$ & \ref{lem:asympGoodEnough},~\ref{lem:union},~\ref{prop:P12},~\ref{prop:P1211}\\[1mm]
\ref{prop:P13} & $\vec P_{1,3}$ &\ref{sec:tricks}&  & $\checkmark$ & \ref{lem:inOut}\\[1mm]
\ref{prop:forest} & $\vec P_2\sqcup\vec P_{1,1}$ &\ref{sec:tricks}&  & $\checkmark$ & \ref{lem:edgeFlip},~\ref{lem:union}\\[1mm]
\ref{prop:P1331} & $\vec P_{1,3,3,1}$ &\ref{sec:2blocks}&  & $\checkmark$ & \ref{lem:union},~\ref{obs:1/2diag}--\ref{lem:pathU}\\[1mm]
\ref{prop:P22} & $\vec P_{2,2}$&\ref{sec:2blocks} &  & $\checkmark$ & \ref{lem:union},~\ref{lem:polynomial},~\ref{lem:edgeFlipU},~\ref{lem:pathU}\\[1mm]
\ref{th:0mod4} & See Theorem~\ref{th:0mod4}& \ref{sec:0mod4} &  & $\checkmark$ & \ref{lem:entropyMain},~\ref{constr:D4*},~\ref{constr:Dk*}\\[1mm]
\ref{th:2blocks} & See Theorem~\ref{th:2blocks}& \ref{sec:0mod4}&  & $\checkmark$ & \ref{th:0mod4},~\ref{prop:P12},~\ref{prop:P14},~\ref{prop:P13},~\ref{lem:reduction}\\[1mm]
\ref{th:3spider} &See Theorem~\ref{th:3spider}&\ref{sec:spiders} &  & $\checkmark$ & \ref{lem:entropyMain},~\ref{lem:cover1/2}
\end{tabular}
\caption{The main examples of Sidorenko and anti-Sidorenko oriented graphs exhibited in the paper, the section containing the proof, and the results that the proof depends on. Here TS means tournament Sidorenko and TAS means tournament anti-Sidorenko. }
\label{tab:dependencies}
\end{table}

We now describe the sections in more detail.  Section~\ref{sec:tricks} establishes several elementary properties of homomorphism densities in tournaments.  These tools are used to establish the tournament Sidorenko and tournament anti-Sidorenko properties for several small examples which are used later.  Section~\ref{sec:2blocks} uses an algebraic expansion technique, together with estimates for paths in skew-symmetric matrices from~\cite{Grzesik+23}, to obtain two useful examples of tournament anti-Sidorenko oriented paths and to reduce the proof of Theorem~\ref{th:2blocks} to the case that both blocks have equal length.

After Section~\ref{sec:2blocks}, the paper shifts to focus on developing and applying an entropy-based approach to the problem of classifying tournament anti-Sidorenko oriented trees. Specifically, in Section~\ref{sec:sandwich}, we prove Lemma~\ref{lem:entropyMain} which shows that the existence of a certain type of certificate---which we call an ``$\vec{H}$-sandwich''---is sufficient for an oriented forest $\vec{H}$ to be tournament anti-Sidorenko. To illustrate the approach, we provide a sandwich certificate to re-prove the theorem of Sah, Sawhney and Zhao~\cite{SahSawhneyZhao23} that every directed path is tournament anti-Sidorenko. In Section~\ref{sec:0mod4}, we construct the sandwich certificates for oriented paths needed to prove Theorem~\ref{th:0mod4}. In the same section, we derive Theorem~\ref{th:2blocks} from Theorem~\ref{th:0mod4} and the reduction proved in Section~\ref{sec:2blocks}.  Finally, Section~\ref{sec:spiders} establishes sandwich certificates for an orientation of every $3$-spider, except for those which contain a leg of length 1, which is covered by a result of~\cite{HeManiNieTungWei25+}, thereby proving Theorem~\ref{th:3spider}. Many of the sandwich certificates in the paper are most easily verified by analyzing a diagram. To make the paper easier to navigate, we collect all of the diagrams for the results in Sections~\ref{sec:0mod4} and~\ref{sec:spiders} in Appendix~\ref{app:gadget-atlas}. 

We conclude the paper in Section~\ref{sec:concl} by mentioning some related independent work.

\section{Basic Tricks}
\label{sec:tricks}

Throughout the paper, we assume that the vertices of any oriented path with $k$ arcs are labeled $v_0,\dots,v_k$, which we view as being arranged from left to right. Given $\ell_1,\ell_2,\dots,\ell_m\geq1$, let $\vec{P}_{\ell_1,\dots,\ell_m}$ be the oriented path with $k=\sum_{i=1}^m\ell_i$ arcs in which, for each $1\leq i\leq m$, the $i$th block has length $\ell_i$ and the first block is directed from left to right. In particular, $\vec{P}_k$ is a path with one block of length $k$. For convenience, we let $\vec{P}_0$ be the path with only one vertex and $\vec{P}_{\ell_1,\dots,\ell_m,0,\dots,0}:=\vec{P}_{\ell_1,\dots,\ell_m}$. 

The goals of this section are to build up some basic results on homomorphism densities in tournaments which will be used throughout the paper, and to use them to establish Theorem~\ref{th:2blocks} for several small cases---namely $\vec{P}_{1,2},\vec{P}_{1,3}$ and $\vec{P}_{1,4}$. At the end of the section, we will also show that $\vec{P}_{2}\sqcup\vec{P}_{1,1}$ is tournament anti-Sidorenko, a fact which will be useful to us later in the paper.

Given an oriented graph $\vec{H}$, let $\cev{H}$ be the oriented graph obtained from $\vec{H}$ by reversing the direction of all arcs. The next lemma is quite obvious, but can be useful as it implies that $\vec{H}$ is tournament anti-Sidorenko if and only if $\cev{H}$ is, and so we can freely exchange $\vec{H}$ and $\cev{H}$ whenever it is convenient to do so.

\begin{lem}
\label{lem:reverseAllArcs}
For any oriented graphs $\vec{H}$ and $\vec{G}$, it holds that $\hom(\vec{H},\vec{G})=\hom(\cev{H},\cev{G})$.
\end{lem}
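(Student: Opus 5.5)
The plan is to exhibit an explicit bijection between the two sets of homomorphisms; in fact we will show that the very same maps are homomorphisms in both settings. Recall that $\cev{H}$ and $\cev{G}$ have the same vertex sets as $\vec{H}$ and $\vec{G}$. Their arc sets are $A(\cev{H})=\{(v,u):(u,v)\in A(\vec{H})\}$ and similarly for $\cev{G}$.

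The single step is the following check. Let $f:V(\vec{H})\to V(\vec{G})$ be any function. We claim $f$ is a homomorphism $\vec{H}\to\vec{G}$ if and only if $f$ is a homomorphism $\cev{H}\to\cev{G}$.

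Suppose first that $f:\vec{H}\to\vec{G}$ is a homomorphism. Take an arc of $\cev{H}$; it has the form $(v,u)$ with $(u,v)\in A(\vec{H})$. Then $(f(u),f(v))\in A(\vec{G})$, and so $(f(v),f(u))\in A(\cev{G})$, as required.

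For the converse, apply the same argument to $\cev{H}$ and $\cev{G}$. This uses the observation that reversing all arcs twice returns the original oriented graph.

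Hence the two homomorphism sets coincide as subsets of $V(\vec{G})^{V(\vec{H})}$, and in particular they have the same cardinality. This gives $\hom(\vec{H},\vec{G})=\hom(\cev{H},\cev{G})$.

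There is no real obstacle here; the only care needed is to note that reversal is an involution. The consequence mentioned before the lemma also follows directly. If $\vec{T}$ is a tournament, then so is $\cev{T}$, and reversal permutes the tournaments on a given vertex set. Moreover $a(\cev{H})=a(\vec{H})$ and $v(\cev{T})=v(\vec{T})$. Therefore $\vec{H}$ is tournament anti-Sidorenko exactly when $\cev{H}$ is.
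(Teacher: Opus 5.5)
Your proposal is correct and follows the same approach as the paper: a fixed map $f$ is a homomorphism $\vec{H}\to\vec{G}$ if and only if it is a homomorphism $\cev{H}\to\cev{G}$, because each arc of $\vec{H}$ or $\vec{G}$ corresponds to the reversed arc of $\cev{H}$ or $\cev{G}$. The paper states this arc correspondence as an ``if and only if'' directly, while you prove one direction and obtain the converse from the fact that reversal is an involution; the two arguments are the same.
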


\begin{proof}
For any function $\varphi:V(\vec{H})\to V(\vec{G})$ and vertices $u,v\in V(\vec{H})$, we have that $(u,v)$ is an arc of $\vec{H}$ if and only if $(v,u)$ is an arc of $\cev{H}$ and that $(\varphi(u),\varphi(v))$ is an arc of $\vec{G}$ if and only if $(\varphi(v),\varphi(u))$ is an arc of $\cev{G}$. Therefore, $\varphi$ is a homomorphism from $\vec{H}$ to $\vec{G}$ if and only if it is a homomorphism from $\cev{H}$ to $\cev{G}$.
\end{proof}

The asymmetry between the definition of tournament Sidorenko, which includes a $o(1)$ term, and tournament anti-Sidorenko, which does not, may seem somewhat awkward. However, as we show next, both notions can be written with a $o(1)$ term. 

\begin{lem}
\label{lem:asympGoodEnough}
If $\vec{H}$ is an oriented graph such that $t(\vec{H},\vec{T})\leq (1+o(1))(1/2)^{a(\vec{H})}$ for every tournament $\vec{T}$, then $\vec{H}$ is tournament anti-Sidorenko.
\end{lem}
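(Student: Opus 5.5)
We argue by contradiction, using a tensor-power (blow-up) amplification. Suppose some tournament $\vec{T}$ on $n$ vertices has $t(\vec{H},\vec{T})=c\,(1/2)^{a(\vec{H})}$ with $c>1$. The aim is to build, from $\vec{T}$, tournaments of unbounded order in which the density of $\vec{H}$ is at least $c^m(1/2)^{a(\vec{H})}$. Since $c^m\to\infty$, this contradicts $t(\vec{H},\vec{T}_m)\le(1+o(1))(1/2)^{a(\vec{H})}$ as $v(\vec{T}_m)\to\infty$.

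The natural construction is a lexicographic-type product. The plain tensor product of tournaments is not a tournament, since pairs agreeing in some coordinate get no arc. Instead, on $V(\vec{T})^m$ we put an arc from $x$ to $y$ whenever $x\neq y$ and $(x_i,y_i)\in A(\vec{T})$, where $i$ is the first coordinate in which $x$ and $y$ differ. This gives a tournament $\vec{T}^{(m)}$ on $n^m$ vertices. Every $m$-tuple of homomorphisms $f_1,\dots,f_m:\vec{H}\to\vec{T}$ yields a map $f=(f_1,\dots,f_m):V(\vec{H})\to V(\vec{T})^m$. For each arc $(u,v)$ of $\vec{H}$ we have $f_1(u)\neq f_1(v)$, because $\vec{T}$ has no loops. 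So the first coordinate already differs and the arc is correctly oriented in $\vec{T}^{(m)}$. In fact every coordinate gives a correct arc, so $f$ is a homomorphism. Distinct tuples give distinct maps, hence
\[
\hom(\vec{H},\vec{T}^{(m)})\ \ge\ \hom(\vec{H},\vec{T})^m,
\qquad
t(\vec{H},\vec{T}^{(m)})\ \ge\ t(\vec{H},\vec{T})^m \;=\; c^m(1/2)^{m\,a(\vec{H})}.
\]

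This is the main obstacle: the bound has the wrong exponent on $1/2$. It compares with $(1/2)^{m a(\vec{H})}$, not $(1/2)^{a(\vec{H})}$. The lexicographic product only checks the first differing coordinate, so it admits many more homomorphisms than the tuples counted above, but these are hard to count. The fix is to randomize the later coordinates, which removes this difficulty. Take $\vec{T}_m$ to be a random blow-up: replace each vertex of $\vec{T}$ by a set of $N$ vertices, orient arcs between different parts as in $\vec{T}$, and orient arcs inside each part uniformly at random.

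Then do the computation directly. A uniformly random map $V(\vec{H})\to V(\vec{T}_m)$ projects to a uniformly random map $\varphi:V(\vec{H})\to V(\vec{T})$. It is a homomorphism exactly when the arcs of $\vec{H}$ sent across parts are respected by $\varphi$ and the arcs sent inside a part are respected by the random orientation. Arcs of the second kind are mapped to distinct pairs with probability $1-O(1/N)$ and are then correct with probability $1/2$ each. So the expected density is approximately the density in a "$\vec{T}$ with loops weighted $1/2$". This is at least $t(\vec{H},\vec{T})$, since the terms with no collapsed arcs are exactly the homomorphisms of $\vec{H}$ into $\vec{T}$.

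This only transfers density $c(1/2)^{a(\vec{H})}$ to large tournaments, which is precisely what is needed. Choose an outcome $\vec{T}_N$ with density at least the expectation. This gives tournaments of order $nN\to\infty$ with $t(\vec{H},\vec{T}_N)\ge c(1/2)^{a(\vec{H})}-O(1/N)$. Since $c>1$, this contradicts the hypothesis $t\le(1+o(1))(1/2)^{a(\vec{H})}$. The tensor-power idea is therefore unnecessary; the key step is just the blow-up expectation bound, i.e. checking that the $O(1/N)$ collision terms are negligible.
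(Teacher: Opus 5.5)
Your final argument is correct and is essentially the paper's: blow up the counterexample $\vec{T}$ into parts of size $N$, orient arcs between parts as in $\vec{T}$, and observe that the density of $\vec{H}$ does not drop. The randomization of the arcs inside the parts and the $O(1/N)$ collision analysis are unnecessary, and the tensor-power detour can simply be dropped. The lifts of homomorphisms $\vec{H}\to\vec{T}$ alone already give $t(\vec{H},\vec{T}_N)\geq t(\vec{H},\vec{T})$ exactly, for any orientation inside the parts, which is how the paper argues.
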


\begin{proof}
We prove the contrapositive. Suppose that there is a tournament $\vec{T}$ and $\varepsilon>0$ such that $t(\vec{H},\vec{T})=(1+\varepsilon)(1/2)^{a(\vec{H})}$. For each $n\geq1$, let $\vec{T}_n$ be a tournament obtained from $\vec{T}$ by replacing each vertex $u$ with a set $S_u$ of $n$ vertices and adding all arcs from set $S_u$ to set $S_v$ if $(u,v)\in A(\vec{T})$ and adding arcs inside the sets $S_u$ arbitrarily. Then $t(\vec{H},\vec{T}_n)\geq t(\vec{H},\vec{T}) = (1+\varepsilon)(1/2)^{a(\vec{H})}$ by construction and $v(\vec{T}_n)=n\cdot v(\vec{T})$. Therefore, it is not the case that $t(\vec{H},\vec{T}_n)\leq (1+o(1))(1/2)^{a(\vec{H})}$, and so the proof is complete. 
\end{proof}

Next, we obtain a useful property of homomorphisms into tournaments.

\begin{lem}
\label{lem:edgeFlip}
Let $\vec{F}$ be an oriented graph and let $u$ and $v$ be distinct vertices of $\vec{F}$ such that neither of the arcs $(u,v)$ nor $(v,u)$ are present in $\vec{F}$. Let $\vec{F}_{u,v}$ and $\vec{F}_{v,u}$ be the oriented graphs obtained from $\vec{F}$ by adding the arcs $(u,v)$ and $(v,u)$, respectively. Then, for every tournament $\vec{T}$,
\[\hom(\vec{F},\vec{T}) = \hom(\vec{F}_{u,v},\vec{T})+\hom(\vec{F}_{v,u},\vec{T}) + O\left(v(\vec{T})^{v(\vec{F})-1}\right).\]
\end{lem}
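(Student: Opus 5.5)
The plan is to partition the homomorphisms from $\vec{F}$ to $\vec{T}$ according to how the images of $u$ and $v$ are related in $\vec{T}$. Fix a tournament $\vec{T}$ and let $\varphi:V(\vec{F})\to V(\vec{T})$ be a homomorphism of $\vec{F}$. Since $\vec{T}$ is a tournament, exactly one of the following holds: $\varphi(u)=\varphi(v)$; $(\varphi(u),\varphi(v))\in A(\vec{T})$; or $(\varphi(v),\varphi(u))\in A(\vec{T})$. This gives a partition of the homomorphisms $\vec{F}\to\vec{T}$ into three classes, and $\hom(\vec{F},\vec{T})$ is the sum of their sizes.

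Next, I will identify the second and third classes with $\hom(\vec{F}_{u,v},\vec{T})$ and $\hom(\vec{F}_{v,u},\vec{T})$. A map $\varphi$ is a homomorphism from $\vec{F}_{u,v}$ to $\vec{T}$ exactly when it is a homomorphism from $\vec{F}$ and also $(\varphi(u),\varphi(v))\in A(\vec{T})$, because $A(\vec{F}_{u,v})=A(\vec{F})\cup\{(u,v)\}$. So the second class has size $\hom(\vec{F}_{u,v},\vec{T})$. By symmetry, the third class has size $\hom(\vec{F}_{v,u},\vec{T})$. Both added graphs are still oriented graphs: the hypothesis that neither $(u,v)$ nor $(v,u)$ is an arc of $\vec{F}$ guarantees no pair receives two arcs, and $u\neq v$ rules out a loop.

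Finally, I will bound the first class by the number of maps $V(\vec{F})\to V(\vec{T})$ with $\varphi(u)=\varphi(v)$. Such a map is determined by its values on $V(\vec{F})\setminus\{v\}$, so there are at most $v(\vec{T})^{v(\vec{F})-1}$ of them. This gives the $O\bigl(v(\vec{T})^{v(\vec{F})-1}\bigr)$ term, with implied constant $1$.

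No step is a genuine obstacle. The only point needing care is to use the tournament property in the trichotomy: when $\varphi(u)\neq\varphi(v)$, exactly one of the two arcs is present in $\vec{T}$, so the classes neither overlap nor miss any homomorphism.
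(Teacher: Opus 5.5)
Your proposal is correct and follows essentially the same argument as the paper: split the homomorphisms of $\vec{F}$ according to whether $\varphi(u)=\varphi(v)$ or which of the two arcs $\vec{T}$ contains, identify the last two classes with $\hom(\vec{F}_{u,v},\vec{T})$ and $\hom(\vec{F}_{v,u},\vec{T})$, and bound the collapsed class by $v(\vec{T})^{v(\vec{F})-1}$. Your extra remark that the implied constant is $1$ is a harmless refinement of the paper's $O(\cdot)$ bound.
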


\begin{proof}
For each homomorphism $\varphi:\vec{F}\to\vec{T}$ such that $\varphi(u)\neq \varphi(v)$, exactly one of $(\varphi(u),\varphi(v))$ or $(\varphi(v),\varphi(u))$ must be an arc of $\vec{T}$, since $\vec{T}$ is a tournament. Thus, every such homomorphism corresponds to a homomorphism from $\vec{F}_{u,v}$ or $\vec{F}_{v,u}$ to $\vec{T}$, but not both. The number of homomorphisms from $\vec{F}$ to $\vec{T}$ which map $u$ and $v$ to the same vertex is $O\left(v(\vec{T})^{v(\vec{F})-1}\right)$, and so the proof is complete. 
\end{proof}

The next lemma relates the homomorphism density of a disconnected oriented graph to that of its components. Given two oriented graphs $\vec{H}$ and $\vec{F}$, we let $\vec{H}\sqcup \vec{F}$ denote their vertex-disjoint union.

\begin{lem}
\label{lem:union}
For any oriented graphs $\vec{H},\vec{F}$ and $\vec{G}$, we have $t(\vec{H}\sqcup \vec{F},\vec{G})=t(\vec{H},\vec{G})t(\vec{F},\vec{G})$.
\end{lem}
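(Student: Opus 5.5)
The plan is to prove the identity first for homomorphism counts and then divide by the appropriate power of $v(\vec{G})$. The natural tool is a bijection. Since $V(\vec{H}\sqcup\vec{F})$ is the disjoint union of $V(\vec{H})$ and $V(\vec{F})$, every function $\varphi:V(\vec{H}\sqcup\vec{F})\to V(\vec{G})$ corresponds uniquely to the pair of restrictions $(\varphi|_{V(\vec{H})},\varphi|_{V(\vec{F})})$. Conversely, any pair of functions $\psi:V(\vec{H})\to V(\vec{G})$ and $\chi:V(\vec{F})\to V(\vec{G})$ glues to a unique function on the disjoint union.

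The key step is to check that this correspondence respects the homomorphism property. Every arc of $\vec{H}\sqcup\vec{F}$ lies entirely within $\vec{H}$ or entirely within $\vec{F}$, because no arcs join the two parts. Hence $\varphi$ maps every arc to an arc of $\vec{G}$ if and only if $\varphi|_{V(\vec{H})}$ is a homomorphism $\vec{H}\to\vec{G}$ and $\varphi|_{V(\vec{F})}$ is a homomorphism $\vec{F}\to\vec{G}$. This yields
\[\hom(\vec{H}\sqcup\vec{F},\vec{G})=\hom(\vec{H},\vec{G})\cdot\hom(\vec{F},\vec{G}).\]

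Finally, since $v(\vec{H}\sqcup\vec{F})=v(\vec{H})+v(\vec{F})$, we have
\[v(\vec{G})^{v(\vec{H}\sqcup\vec{F})}=v(\vec{G})^{v(\vec{H})}\cdot v(\vec{G})^{v(\vec{F})}.\]
Dividing the count identity by this quantity gives $t(\vec{H}\sqcup\vec{F},\vec{G})=t(\vec{H},\vec{G})\,t(\vec{F},\vec{G})$.

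There is no real obstacle here. The only point requiring care is to use that the union is vertex-disjoint with no arcs between the two parts, which is exactly what makes the restriction map a bijection onto pairs of homomorphisms.
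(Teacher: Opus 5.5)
Your proposal is correct and follows the same route as the paper: a map on $V(\vec{H}\sqcup\vec{F})$ is a homomorphism exactly when both of its restrictions are, which gives $\hom(\vec{H}\sqcup\vec{F},\vec{G})=\hom(\vec{H},\vec{G})\hom(\vec{F},\vec{G})$. Dividing by $v(\vec{G})^{v(\vec{H})+v(\vec{F})}$ then yields the density identity.
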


\begin{proof}
A function $\varphi:V(\vec{H}\sqcup \vec{F})\to V(\vec{G})$ is a homomorphism from $\vec{H}\sqcup \vec{F}$ to $\vec{G}$ if and only if its restrictions to $V(\vec{H})$ and $V(\vec{F})$ are homomorphisms from $\vec{H}$ to $\vec{G}$ and $\vec{F}$ to $\vec{G}$, respectively. Therefore, $\hom(\vec{H}\sqcup \vec{F},\vec{G})=\hom(\vec{H},\vec{G})\hom(\vec{F},\vec{G})$, which implies the result.
\end{proof}

The above lemmas are key to building the family of \emph{impartial} oriented graphs---i.e. oriented graphs that are simultaneously tournament Sidorenko and tournament anti-Sidorenko---characterized by Zhao and Zhou~\cite{ZhaoZhou20}. The smallest non-trivial example is $\vec{P}_{1,2}$.

\begin{prop}[See Zhao and Zhou~\cite{ZhaoZhou20}]
\label{prop:P12}
The oriented path $\vec{P}_{1,2}$ is tournament anti-Sidorenko and tournament Sidorenko.
\end{prop}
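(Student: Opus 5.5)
The plan is to express $\hom(\vec{P}_{1,2},\vec{T})$ through a disconnected oriented graph whose density is known exactly, using Lemma~\ref{lem:edgeFlip} and Lemma~\ref{lem:union}. With the labeling convention, $\vec{P}_{1,2}$ has arcs $(v_0,v_1)$, $(v_2,v_1)$ and $(v_3,v_2)$. Let $\vec{F}$ be the oriented graph on $\{v_0,v_1,v_2,v_3\}$ with the two arcs $(v_0,v_1)$ and $(v_3,v_2)$, so $\vec{F}\cong\vec{P}_1\sqcup\vec{P}_1$. The pair $v_1,v_2$ is non-adjacent in $\vec{F}$, and we apply Lemma~\ref{lem:edgeFlip} to it.

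Adding $(v_2,v_1)$ gives exactly $\vec{P}_{1,2}$. Adding $(v_1,v_2)$ gives the path $v_0\to v_1\to v_2\leftarrow v_3$, which is $\vec{P}_{2,1}$. Read from right to left, this is $v_3\to v_2\leftarrow v_1\leftarrow v_0$, i.e. a copy of $\vec{P}_{1,2}$. Hence both augmented graphs are isomorphic to $\vec{P}_{1,2}$ and have the same homomorphism count.

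Rather than only using the $O(\cdot)$ form, I will use the exact identity from the proof of Lemma~\ref{lem:edgeFlip}:
\[\hom(\vec{F},\vec{T})=2\hom(\vec{P}_{1,2},\vec{T})+N,\]
where $N$ counts homomorphisms $\vec{F}\to\vec{T}$ with $\varphi(v_1)=\varphi(v_2)$. Clearly $0\le N\le n^3$, where $n=v(\vec{T})$. By Lemma~\ref{lem:union},
\[t(\vec{F},\vec{T})=t(\vec{P}_1,\vec{T})^2=\left(\tfrac{n-1}{2n}\right)^2,\]
since a tournament has exactly $\binom n2$ arcs.

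Dividing by $n^4$ gives the upper bound
\[t(\vec{P}_{1,2},\vec{T})\le \tfrac12\left(\tfrac{n-1}{2n}\right)^2\le \tfrac18=(1/2)^{a(\vec{P}_{1,2})}.\]
This proves the anti-Sidorenko property exactly, with no need for Lemma~\ref{lem:asympGoodEnough}. The same identity also gives the lower bound
\[t(\vec{P}_{1,2},\vec{T})\ge \tfrac12\left(\tfrac{n-1}{2n}\right)^2-\tfrac{1}{2n}=(1-o(1))\tfrac18,\]
which is the tournament Sidorenko property.

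The only delicate step is checking the isomorphism $\vec{P}_{2,1}\cong\vec{P}_{1,2}$ under reversing the vertex order. This does not reverse any arcs, so Lemma~\ref{lem:reverseAllArcs} is not needed. I expect this orientation bookkeeping to be the main place an error could creep in; the rest is routine.
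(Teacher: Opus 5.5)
Your proposal is correct and follows essentially the same route as the paper: the same auxiliary forest $\vec{F}\cong\vec{P}_1\sqcup\vec{P}_1$, Lemma~\ref{lem:edgeFlip} applied to the pair $v_1,v_2$ with both augmentations isomorphic to $\vec{P}_{1,2}$, and Lemma~\ref{lem:union} together with $\hom(\vec{P}_1,\vec{T})=\binom{n}{2}$. The only difference is that you write the error term explicitly as a nonnegative count $N\le n^3$; the paper uses this nonnegativity implicitly to get the exact upper bound $(1/2)^3$.
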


\begin{proof}
Let $\vec{F}$ be the oriented graph with vertices $0,1,2$ and $3$ and arcs $(0,1)$ and $(3,2)$. Let $\vec{F}_{1,2}$ and $\vec{F}_{2,1}$ be obtained from $\vec{F}$ by adding the arcs $(1,2)$ and $(2,1)$, respectively. Observe that both $\vec{F}_{1,2}$ and $\vec{F}_{2,1}$ are isomorphic to $\vec{P}_{1,2}$. Thus, by Lemma~\ref{lem:edgeFlip}, every tournament $\vec{T}$ satisfies
\[\hom(\vec{F},\vec{T})=2\hom(\vec{P}_{1,2},\vec{T}) + O\left(v(\vec{T})^{3}\right)\]
or, in other words,
\[\hom(\vec{P}_{1,2},\vec{T}) = \frac{1}{2}\left(\hom(\vec{F},\vec{T})  -O\left(v(\vec{T})^{3}\right)\right).\]
Note that $\vec{F}$ is isomorphic to $\vec{P}_1\sqcup\vec{P}_1$. It is easily observed that every tournament $\vec{T}$ satisfies $\hom(\vec{P}_1,\vec{T})=\binom{v(\vec{T})}{2}$ and so, by Lemma~\ref{lem:union}, we have $\hom(\vec{F},\vec{T})=\binom{v(\vec{T})}{2}^2$. Putting all of this together, we get $(1-o(1))(1/2)^3\leq t(\vec{P}_{1,2},\vec{T})\leq (1/2)^3$ for every tournament $\vec{T}$ and so the proof is complete. 
\end{proof}

Next, we show that $\vec{P}_{1,2,1,1}$ is tournament Sidorenko and use that to get that $\vec{P}_{1,4}$ is tournament anti-Sidorenko, thereby establishing another case of Theorem~\ref{th:2blocks}. 

\begin{prop}
\label{prop:P1211}
The oriented path $\vec{P}_{1,2,1,1}$ is tournament Sidorenko. 
\end{prop}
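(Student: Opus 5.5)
The plan is to imitate the proof of Proposition~\ref{prop:P12}: find a disconnected oriented graph $\vec{F}$ and a non-adjacent pair $u,v$ such that both $\vec{F}_{u,v}$ and $\vec{F}_{v,u}$ are isomorphic to $\vec{P}_{1,2,1,1}$. Then Lemma~\ref{lem:edgeFlip} expresses $\hom(\vec{P}_{1,2,1,1},\vec{T})$ as half of $\hom(\vec{F},\vec{T})$, up to an $O(v(\vec{T})^{5})$ error. A lower bound on $\hom(\vec{F},\vec{T})$ would then follow from Lemma~\ref{lem:union} together with a Cauchy--Schwarz bound for the components.

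Concretely, $\vec{P}_{1,2,1,1}$ has vertices $v_0,\dots,v_5$ and arcs $(v_0,v_1),(v_2,v_1),(v_3,v_2),(v_3,v_4),(v_5,v_4)$. I would delete the arc $(v_3,v_2)$. The resulting $\vec{F}$ has two components, $v_0\to v_1\leftarrow v_2$ and $v_3\to v_4\leftarrow v_5$, each a copy of $\vec{P}_{1,1}$ (an in-star with two leaves). Adding $(v_3,v_2)$ recovers $\vec{P}_{1,2,1,1}$. Adding $(v_2,v_3)$ instead gives the path $v_0\to v_1\leftarrow v_2\to v_3\to v_4\leftarrow v_5$.

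The key observation is that reading this second path from right to left gives blocks of lengths $1,1,2,1$ with the first block directed forward. That is exactly $\vec{P}_{1,2,1,1}$ read from right to left, so both additions are isomorphic to $\vec{P}_{1,2,1,1}$. Checking this isomorphism carefully is the only delicate step. The rest is routine.

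Given the isomorphism, Lemma~\ref{lem:edgeFlip} yields
\[\hom(\vec{P}_{1,2,1,1},\vec{T})=\tfrac12\hom(\vec{P}_{1,1}\sqcup\vec{P}_{1,1},\vec{T})-O\bigl(v(\vec{T})^{5}\bigr).\]
By Lemma~\ref{lem:union}, the first term on the right equals $\tfrac12\hom(\vec{P}_{1,1},\vec{T})^2$. Writing $n=v(\vec{T})$, we have $\hom(\vec{P}_{1,1},\vec{T})=\sum_{x}d^-(x)^2$. Since $\sum_x d^-(x)=\binom n2$, Cauchy--Schwarz gives $\sum_{x}d^-(x)^2\geq \binom{n}{2}^2/n=(1-o(1))n^3/4$.

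Hence $\hom(\vec{P}_{1,2,1,1},\vec{T})\geq \tfrac12(1-o(1))n^6/16-O(n^5)$, that is, $t(\vec{P}_{1,2,1,1},\vec{T})\ge(1-o(1))(1/2)^5$. Since $a(\vec{P}_{1,2,1,1})=5$, this is exactly the tournament Sidorenko property.
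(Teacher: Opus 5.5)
Your proof is correct and is essentially the paper's argument: you use the same forest $\vec{P}_{1,1}\sqcup\vec{P}_{1,1}$ obtained by deleting the arc $(v_3,v_2)$ and the same application of Lemmas~\ref{lem:edgeFlip} and~\ref{lem:union}, and your Cauchy--Schwarz bound $\sum_x d^-(x)^2\geq\binom{n}{2}^2/n$ supplies the fact that $\vec{P}_{1,1}$ is tournament Sidorenko, which the paper simply cites. There is one small slip in the isomorphism check: reading $v_0\to v_1\leftarrow v_2\to v_3\to v_4\leftarrow v_5$ from left to right gives blocks $1,1,2,1$, while reading it from right to left gives $1,2,1,1$ directly, but in either case both additions are isomorphic to $\vec{P}_{1,2,1,1}$.
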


\begin{proof}
Let $\vec{F}$ be the oriented graph with vertices $0,1,2,3,4,5$ and arcs $(0,1),(2,1),(3,4),(5,4)$. Let $\vec{F}_{2,3}$ and $\vec{F}_{3,2}$ be obtained from $\vec{F}$ by adding the arcs $(2,3)$ and $(3,2)$, respectively. Then, clearly, $\vec{F}$ is isomorphic to $\vec{P}_{1,1}\sqcup \vec{P}_{1,1}$ and each of $\vec{F}_{2,3}$ and $\vec{F}_{3,2}$ is isomorphic to $\vec{P}_{1,2,1,1}$. So, by Lemmas~\ref{lem:union} and~\ref{lem:edgeFlip}, we have, for any tournament $\vec{T}$,
\[\hom(\vec{P}_{1,1},\vec{T})^2 = \hom(\vec{F},\vec{T}) = 2\hom(\vec{P}_{1,2,1,1},\vec{T})+O(v(\vec{T})^{5})\]
or, in other words,
\[t(\vec{P}_{1,2,1,1},\vec{T})=\frac{1}{2}t(\vec{P}_{1,1},\vec{T})^2-o(1).\]
It is well-known, and easy to prove, that $\vec{P}_{1,1}$ is tournament Sidorenko; see, e.g.,~\cite[Example~8.1]{ZhaoZhou20}. Therefore,
\[
t(\vec{P}_{1,2,1,1},\vec{T})
\geq
(1-o(1))(1/2)^5,
\]
and so $\vec{P}_{1,2,1,1}$ is tournament Sidorenko.
\end{proof}

\begin{prop}
\label{prop:P14}
The oriented path $\vec{P}_{1,4}$ is tournament anti-Sidorenko. 
\end{prop}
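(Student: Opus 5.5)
The plan is to express $\vec{P}_{1,4}$ through a single application of the edge-flip identity (Lemma~\ref{lem:edgeFlip}). The other oriented graph produced by that flip will be $\vec{P}_{1,2,1,1}$, which is tournament Sidorenko by Proposition~\ref{prop:P1211}. Its lower bound then turns into an upper bound for $\vec{P}_{1,4}$, and Lemma~\ref{lem:asympGoodEnough} absorbs the resulting $o(1)$ error.

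\textbf{The auxiliary graph.} Let $\vec{F}=\vec{P}_{1,2}\sqcup\vec{P}_1$. Write the copy of $\vec{P}_{1,2}$ as $a_0\to a_1\leftarrow a_2\leftarrow a_3$ and the copy of $\vec{P}_1$ as $b_0\to b_1$, and take $u=a_3$ and $v=b_1$.
\begin{itemize}
\item Adding the arc $(b_1,a_3)$ gives $a_0\to a_1\leftarrow a_2\leftarrow a_3\leftarrow b_1\leftarrow b_0$, which is isomorphic to $\vec{P}_{1,4}$.
\item Adding the arc $(a_3,b_1)$ gives $a_0\to a_1\leftarrow a_2\leftarrow a_3\to b_1\leftarrow b_0$. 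Its blocks have lengths $1,2,1,1$, so it is isomorphic to $\vec{P}_{1,2,1,1}$.
\end{itemize}

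\textbf{The identity.} By Lemma~\ref{lem:edgeFlip}, every tournament $\vec{T}$ on $n$ vertices satisfies
\[
\hom(\vec{F},\vec{T})=\hom(\vec{P}_{1,4},\vec{T})+\hom(\vec{P}_{1,2,1,1},\vec{T})+O(n^{5}).
\]
Both paths have $6$ vertices and $\vec{F}$ has $6$ vertices, so dividing by $n^6$ and applying Lemma~\ref{lem:union} gives
\[
t(\vec{P}_{1,4},\vec{T})=t(\vec{P}_{1,2},\vec{T})\,t(\vec{P}_1,\vec{T})-t(\vec{P}_{1,2,1,1},\vec{T})+o(1).
\]

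\textbf{Bounding each term.}
\begin{itemize}
\item $t(\vec{P}_1,\vec{T})=\binom{n}{2}/n^2\le 1/2$.
\item $t(\vec{P}_{1,2},\vec{T})\le 1/8$ by Proposition~\ref{prop:P12}.
\item $t(\vec{P}_{1,2,1,1},\vec{T})\ge(1-o(1))/32$ by Proposition~\ref{prop:P1211}.
\end{itemize}
Hence
\[
t(\vec{P}_{1,4},\vec{T})\le \tfrac1{16}-\tfrac1{32}+o(1)=(1+o(1))(1/2)^5.
\]
Lemma~\ref{lem:asympGoodEnough} then removes the $o(1)$ term, so $\vec{P}_{1,4}$ is tournament anti-Sidorenko.

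The main obstacle is choosing the right pair of disjoint components and the arc to flip, so that the two outcomes are exactly $\vec{P}_{1,4}$ and a graph already known to be tournament Sidorenko. The check that the two completions have the claimed block structures is the step to verify carefully. After that, the argument is bookkeeping with the earlier propositions.
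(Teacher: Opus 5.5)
Your proposal is correct and is essentially the paper's proof. The paper uses the same forest $\vec{P}_{1,2}\sqcup\vec{P}_1$ and adds the arc between the same pair of vertices in either direction to obtain $\vec{P}_{1,4}$ and $\vec{P}_{1,2,1,1}$. It then concludes exactly as you do, via Lemma~\ref{lem:edgeFlip}, Lemma~\ref{lem:union} with Proposition~\ref{prop:P12}, Proposition~\ref{prop:P1211}, and Lemma~\ref{lem:asympGoodEnough}.
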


\begin{proof}
Let $\vec{F}$ be the oriented forest with vertices $0,1,2,3,4,5$ with arcs $(0,1),(2,1),(3,2),(5,4)$. Let $\vec{F}_{3,4}$ and $\vec{F}_{4,3}$ be obtained from $\vec{F}$ by adding the arcs $(3,4)$ and $(4,3)$, respectively. Clearly, $\vec{F}_{3,4}$ is isomorphic to $\vec{P}_{1,2,1,1}$ and $\vec{F}_{4,3}$ is isomorphic to $\vec{P}_{1,4}$. So, by Lemma~\ref{lem:edgeFlip}, any tournament $\vec{T}$ satisfies
\[\hom(\vec{P}_{1,4},\vec{T})=\hom(\vec{F},\vec{T})-\hom(\vec{P}_{1,2,1,1},\vec{T})-O(v(\vec{T})^5).\]
Clearly, $\vec{F}$ is isomorphic to $\vec{P}_{1,2}\sqcup \vec{P}_1$. The result now follows from Lemma~\ref{lem:asympGoodEnough} and the facts that $\vec{F}$ is tournament anti-Sidorenko (by Lemma~\ref{lem:union} and Proposition~\ref{prop:P12}) and $\vec{P}_{1,2,1,1}$ is tournament Sidorenko (by Proposition~\ref{prop:P1211}). 
\end{proof}

The next lemma provides a simple construction for generating new examples of tournament anti-Sidorenko graphs from old ones; it is essentially the same as~{\cite[Theorem~1.3]{HeManiNieTungWei25+}}. Let $\vec{H}$ and $\vec{F}$ be oriented graphs and let $u\in V(\vec{H})$ and $v\in V(\vec{F})$. Let $\vec{F}_1$ and $\vec{F}_2$ be two copies of the graph $\vec{F}$ where, for each $w\in V(\vec{F})$ and $i\in\{1,2\}$, we let $w_i$ be the vertex of $\vec{F}_i$ corresponding to $w$. Let $\vec{H}\pm_{u,v}\vec{F}$ be the graph obtained from $\vec{H}\sqcup\vec{F}_1\sqcup\vec{F}_2$ by adding the arcs $(u,v_1)$ and $(v_2,u)$. See Figure~\ref{fig:pm-construction} for a diagram illustrating this construction.

\begin{figure}[htbp]
\centering
\begin{tikzpicture}[x=1.0cm,y=1.0cm]

\draw[proofbox] (-0.8,-2.0) rectangle (4.9,2.1);
\node[lbl] at (2.05,2.45) {$\vec{H}$};

\node[vertex] (h1) at (-0.1,1.2) {};
\node[vertex] (h2) at (1.2,1.2) {};
\node[vertex] (h3) at (2.6,1.2) {};
\node[vertex] (h4) at (0.5,-0.1) {};
\node[marked] (u)  at (2.0,-0.1) {};
\node[vertex] (h6) at (3.5,-0.1) {};
\node[vertex] (h7) at (0.2,-1.3) {};
\node[vertex] (h8) at (1.4,-1.3) {};
\node[vertex] (h9) at (3.7,-1.3) {};

\node[lbl, above left=1pt and 1pt of u] {$u$};

\draw[arc] (h1) -- (h2);
\draw[arc] (h2) -- (h3);
\draw[arc] (h1) -- (h4);
\draw[arc] (h4) -- (h2);
\draw[arc] (h3) -- (u);
\draw[arc] (h7) -- (h8);
\draw[arc] (h8) -- (u);
\draw[arc] (u) -- (h6);
\draw[arc] (h6) -- (h9);

\draw[proofbox] (6.3,0.5) rectangle (10.8,2.9);
\node[lbl] at (8.55,3.25) {$\vec{F}_1$};

\node[marked] (v1) at (7.1,1.7) {};
\node[lbl, above left=1pt and 1pt of v1] {$v_1$};

\node[vertex] (f1a) at (8.5,2.3) {};
\node[vertex] (f1b) at (8.5,1.1) {};
\node[vertex] (f1c) at (10.0,1.7) {};

\draw[arc] (v1) -- (f1a);
\draw[arc] (v1) -- (f1b);
\draw[arc] (f1a) -- (f1c);
\draw[arc] (f1b) -- (f1c);
\draw[arc] (f1a) -- (f1b);

\draw[proofbox] (6.3,-2.8) rectangle (10.8,-0.4);
\node[lbl] at (8.55,-3.15) {$\vec{F}_2$};

\node[marked] (v2) at (7.1,-1.6) {};
\node[lbl, below left=1pt and 1pt of v2] {$v_2$};

\node[vertex] (f2a) at (8.5,-1.0) {};
\node[vertex] (f2b) at (8.5,-2.2) {};
\node[vertex] (f2c) at (10.0,-1.6) {};

\draw[arc] (v2) -- (f2a);
\draw[arc] (v2) -- (f2b);
\draw[arc] (f2a) -- (f2c);
\draw[arc] (f2b) -- (f2c);
\draw[arc] (f2a) -- (f2b);

\draw[addarc] (u) .. controls (4.0,0.8) and (5.7,1.7) .. (v1);
\draw[addarc] (v2) .. controls (5.7,-1.7) and (4.0,-0.8) .. (u);

\node[lbl] at (5.55,0.95) {$\,(u,v_1)$};
\node[lbl] at (5.55,-0.95) {$\,(v_2,u)$};

\end{tikzpicture}
\caption{
A schematic illustration of the construction $\vec{H}\pm_{u,v}\vec{F}$. 
It starts with $\vec{H}\sqcup \vec{F}_1\sqcup \vec{F}_2$, where $\vec{F}_1$ and $\vec{F}_2$ are two copies of $\vec{F}$, and then adds the arcs $(u,v_1)$ and $(v_2,u)$.
}
\label{fig:pm-construction}
\end{figure}

\begin{lem}[He et al.~{\cite[Theorem~1.3]{HeManiNieTungWei25+}}]
\label{lem:inOut}
For any oriented graphs $\vec{H}$ and $\vec{F}$, vertices $u\in V(\vec{H})$ and $v\in V(\vec{F})$ and tournament $\vec{T}$, it holds that
\[\hom(\vec{H}\pm_{u,v}\vec{F},\vec{T})\leq \frac{1}{4}\hom(\vec{H},\vec{T})\hom(\vec{F},\vec{T})^2.\]
\end{lem}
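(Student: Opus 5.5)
We prove the inequality by counting homomorphisms according to the image of $u$. The key observation is that, once $\varphi(u)=x$ is fixed, the copies $\vec{F}_1$ and $\vec{F}_2$ can be mapped independently of each other and of the rest of $\vec{H}$. They interact with $x$ only through the single arcs $(u,v_1)$ and $(v_2,u)$.

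For a vertex $x\in V(\vec{T})$, let $h(x)$ be the number of homomorphisms $\psi:\vec{H}\to\vec{T}$ with $\psi(u)=x$. For $y\in V(\vec{T})$, let $f(y)$ be the number of homomorphisms $\psi:\vec{F}\to\vec{T}$ with $\psi(v)=y$. Write $N^+(x)$ and $N^-(x)$ for the out- and in-neighbourhoods of $x$ in $\vec{T}$, and set $F^+(x):=\sum_{y\in N^+(x)}f(y)$ and $F^-(x):=\sum_{y\in N^-(x)}f(y)$. Then
\[\hom(\vec{H}\pm_{u,v}\vec{F},\vec{T})=\sum_{x\in V(\vec{T})}h(x)\,F^+(x)\,F^-(x).\]
Since $\vec{T}$ is a tournament without loops, $N^+(x)$ and $N^-(x)$ partition $V(\vec{T})\setminus\{x\}$. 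Hence
\[F^+(x)+F^-(x)=\hom(\vec{F},\vec{T})-f(x)\leq\hom(\vec{F},\vec{T}).\]

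By AM--GM, $F^+(x)F^-(x)\leq \tfrac14\bigl(F^+(x)+F^-(x)\bigr)^2\leq\tfrac14\hom(\vec{F},\vec{T})^2$. Summing over $x$ and using $\sum_x h(x)=\hom(\vec{H},\vec{T})$ gives
\[\hom(\vec{H}\pm_{u,v}\vec{F},\vec{T})\leq\tfrac14\hom(\vec{H},\vec{T})\hom(\vec{F},\vec{T})^2.\]

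The main step is the product decomposition in the first display. It needs one check: a map on $\vec{H}\sqcup\vec{F}_1\sqcup\vec{F}_2$ is a homomorphism of $\vec{H}\pm_{u,v}\vec{F}$ exactly when its three restrictions are homomorphisms and the two added arcs are respected. This is immediate from the definition of the construction, and the rest is elementary.
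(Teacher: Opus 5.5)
Your proof is correct and is essentially the paper's argument. The paper sums over homomorphisms $\varphi:\vec{H}\to\vec{T}$ and applies AM--GM to the counts $\hom^{+}_{\varphi,u,v}(\vec{F},\vec{T})$ and $\hom^{-}_{\varphi,u,v}(\vec{F},\vec{T})$ of admissible maps on $\vec{F}_1$ and $\vec{F}_2$; these depend only on $\varphi(u)$, so this matches your grouping by $x=\varphi(u)$.
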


\begin{proof}
Given a homomorphism $\varphi:\vec{H}\to\vec{T}$, let $\hom_{\varphi,u,v}^+(\vec{F},\vec{T})$ be the number of homomorphisms $\psi:\vec{F}\to \vec{T}$ such that $(\varphi(u),\psi(v))\in A(\vec{T})$ and let $\hom_{\varphi,u,v}^-(\vec{F},\vec{T})$ be the number such that $(\psi(v),\varphi(u))\in A(\vec{T})$. Then, clearly, 
\[\hom_{\varphi,u,v}^+(\vec{F},\vec{T}) + \hom_{\varphi,u,v}^-(\vec{F},\vec{T})\leq \hom(\vec{F},\vec{T})\]
and so, by AM-GM,
\[\hom_{\varphi,u,v}^+(\vec{F},\vec{T})\hom_{\varphi,u,v}^-(\vec{F},\vec{T})\leq \left(\frac{\hom_{\varphi,u,v}^+(\vec{F},\vec{T}) + \hom_{\varphi,u,v}^-(\vec{F},\vec{T})}{2}\right)^2 \leq \frac{1}{4}\hom(\vec{F},\vec{T})^2.\]
Now, by construction, we have
\[\hom(\vec{H}\pm_{u,v}\vec{F},\vec{T}) = \sum_{\varphi:\vec{H}\to\vec{T}}\hom_{\varphi,u,v}^+(\vec{F},\vec{T})\hom_{\varphi,u,v}^-(\vec{F},\vec{T})\leq \frac{1}{4}\hom(\vec{H},\vec{T})\hom(\vec{F},\vec{T})^2\]
as desired. 
\end{proof}

As a simple application of the above lemma, we establish Theorem~\ref{th:2blocks} for $\vec{P}_{1,3}$.

\begin{prop}
\label{prop:P13}
The oriented path $\vec{P}_{1,3}$ is tournament anti-Sidorenko. 
\end{prop}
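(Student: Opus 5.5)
The plan is to realize $\vec{P}_{1,3}$, after reversing all arcs, as a graph of the form $\vec{H}\pm_{u,v}\vec{F}$, and then apply Lemma~\ref{lem:inOut}. By Lemma~\ref{lem:reverseAllArcs}, $\vec{P}_{1,3}$ is tournament anti-Sidorenko if and only if $\cev{P}_{1,3}$ is, since reversing all arcs of a tournament gives a tournament. So it suffices to work with $\cev{P}_{1,3}$.

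The path $\vec{P}_{1,3}$ has arcs $(v_0,v_1),(v_2,v_1),(v_3,v_2),(v_4,v_3)$. Hence $\cev{P}_{1,3}$ has arcs $(v_1,v_0),(v_1,v_2),(v_2,v_3),(v_3,v_4)$. Take $u=v_2$, and let $\vec{H}=\vec{P}_0$ be the single vertex $u$. Deleting $u$ leaves two components:
\begin{itemize}
\item the arc $(v_1,v_0)$, joined to $u$ by the arc $(v_1,u)$;
\item the arc $(v_3,v_4)$, joined to $u$ by the arc $(u,v_3)$.
\end{itemize}
Each component is a copy of $\vec{F}=\vec{P}_1$ rooted at $v$, the tail of its arc. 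Thus $\cev{P}_{1,3}\cong \vec{P}_0\pm_{u,v}\vec{P}_1$, with $\vec{F}_1$ the copy $v_3\to v_4$ (receiving the arc $(u,v_3)$) and $\vec{F}_2$ the copy $v_1\to v_0$ (sending the arc $(v_1,u)$).

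Now apply Lemma~\ref{lem:inOut} to an arbitrary tournament $\vec{T}$ on $n$ vertices. We have $\hom(\vec{P}_0,\vec{T})=n$ and $\hom(\vec{P}_1,\vec{T})=\binom{n}{2}\le n^2/2$, so
\[
\hom(\cev{P}_{1,3},\vec{T})\le \frac14\, n\binom{n}{2}^2\le \frac{n^5}{16}.
\]
Therefore $t(\cev{P}_{1,3},\vec{T})\le (1/2)^4=(1/2)^{a(\vec{P}_{1,3})}$ exactly, with no $o(1)$ term, and the result follows.

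The only real step is finding the decomposition. Applied to $\vec{P}_{1,3}$ directly, no vertex splits the path into two isomorphic rooted pieces with opposite-direction attaching arcs. That is why the reversal via Lemma~\ref{lem:reverseAllArcs} is needed before Lemma~\ref{lem:inOut} applies.
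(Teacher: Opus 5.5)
Your argument is correct and is essentially the paper's proof: the paper also writes the path as $\vec{P}_0\pm_{u,v}\vec{F}$ with $\vec{F}$ a single arc, and applies Lemma~\ref{lem:inOut} together with $\hom(\vec{P}_1,\vec{T})=\binom{n}{2}$. One thing to fix is your closing remark, which is false: in $\vec{P}_{1,3}$ itself, $v_2$ has the out-arc $(v_2,v_1)$ and the in-arc $(v_3,v_2)$, and the pieces $\{v_0,v_1\}$ and $\{v_3,v_4\}$ are each a single arc rooted at its head, so $\vec{P}_{1,3}\cong\vec{P}_0\pm_{u,v}\vec{P}_1$ directly (with $v$ the head) and the detour through Lemma~\ref{lem:reverseAllArcs} is harmless but unnecessary.
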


\begin{proof}
Let $\vec{H}:=\vec{P}_0$ and $\vec{F}:=\cev{P}_1$. Then it is easily observed that $\vec{H}\pm_{0,1}\vec{F}$ is isomorphic to $\vec{P}_{1,3}$. So, the result follows from Lemma~\ref{lem:inOut} and the fact that $\vec{P}_0$ and $\vec{P}_1$ are clearly tournament anti-Sidorenko. 
\end{proof}

To close this section, let us apply Lemmas~\ref{lem:edgeFlip} and~\ref{lem:union} one more time to obtain an example of a tournament anti-Sidorenko forest, namely $\vec{P}_{2}\sqcup \vec{P}_{1,1}$, which will be useful in Section~\ref{sec:0mod4}.

\begin{prop}
\label{prop:forest}
The oriented forest $\vec{P}_{2}\sqcup \vec{P}_{1,1}$ is tournament anti-Sidorenko. 
\end{prop}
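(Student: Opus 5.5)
The plan is to compute both homomorphism counts exactly in terms of the in-degrees of $\vec{T}$, so that their product becomes a product of two quantities with a fixed sum, and then apply AM--GM, much as in the proof of Lemma~\ref{lem:inOut}. No error term should appear, so Lemma~\ref{lem:asympGoodEnough} will not be needed.

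Fix a tournament $\vec{T}$ on $n$ vertices. For $w\in V(\vec{T})$ write $d^-(w)$ and $d^+(w)$ for its in- and out-degree, so that $d^-(w)+d^+(w)=n-1$. Also let $S:=\sum_w d^-(w)^2$.

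First, count homomorphisms by the image of the centre vertex.
\begin{itemize}
\item $\vec{P}_{1,1}$ has arcs $v_0\to v_1\leftarrow v_2$. Once $v_1\mapsto w$, the vertices $v_0$ and $v_2$ are chosen independently among the in-neighbours of $w$. Hence $\hom(\vec{P}_{1,1},\vec{T})=S$.
\item $\vec{P}_2$ has arcs $v_0\to v_1\to v_2$. Once $v_1\mapsto w$, we pick one in-neighbour and one out-neighbour of $w$. Hence
\[\hom(\vec{P}_2,\vec{T})=\sum_w d^-(w)\bigl(n-1-d^-(w)\bigr).\]
\end{itemize}

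Next, simplify $\hom(\vec{P}_2,\vec{T})$. Every arc of $\vec{T}$ contributes exactly one unit of in-degree, so $\sum_w d^-(w)=\binom{n}{2}$. Therefore
\[\hom(\vec{P}_2,\vec{T})=(n-1)\binom{n}{2}-S=\frac{n(n-1)^2}{2}-S.\]

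Now combine the two counts. By Lemma~\ref{lem:union},
\[\hom(\vec{P}_2\sqcup\vec{P}_{1,1},\vec{T})=\Bigl(\frac{n(n-1)^2}{2}-S\Bigr)S.\]
The two factors are nonnegative and sum to the constant $n(n-1)^2/2$. By AM--GM their product is at most
\[\Bigl(\frac{n(n-1)^2}{4}\Bigr)^2\le \frac{n^6}{16}.\]
Dividing by $n^6$ gives $t(\vec{P}_2\sqcup\vec{P}_{1,1},\vec{T})\le (1/2)^4$, which is exactly the tournament anti-Sidorenko property, since the forest has four arcs.

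There is no real obstacle. The only point needing care is the identity $\sum_w d^-(w)=\binom{n}{2}$, which is what turns $\hom(\vec{P}_2,\vec{T})$ into ``constant minus $\hom(\vec{P}_{1,1},\vec{T})$''. This is the same cancellation that Lemmas~\ref{lem:edgeFlip} and~\ref{lem:union} would deliver asymptotically.
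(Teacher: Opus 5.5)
Your proof is correct and follows the paper's argument: both use Lemma~\ref{lem:union} and AM--GM to bound $\hom(\vec{P}_2,\vec{T})\hom(\vec{P}_{1,1},\vec{T})$ by the square of half the sum, and then bound that sum by $n^3/2$. The only difference is that you compute the sum exactly as $(n-1)\binom{n}{2}$ by counting degrees, whereas the paper gets it from Lemma~\ref{lem:edgeFlip} applied to $\vec{P}_1\sqcup\vec{P}_0$. The paper's version is also exact rather than asymptotic, because the error term it discards counts degenerate maps and is therefore nonnegative.
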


\begin{proof}
Let $\vec{T}$ be any tournament. Then, by Lemma~\ref{lem:union} and the AM-GM Inequality,
\begin{equation}\label{eq:unionForest}\hom(\vec{P}_{2}\sqcup \vec{P}_{1,1},\vec{T})=\hom(\vec{P}_{2},\vec{T})\hom(\vec{P}_{1,1},\vec{T})\leq \left(\frac{\hom(\vec{P}_{2},\vec{T}) + \hom(\vec{P}_{1,1},\vec{T})}{2}\right)^2.\end{equation}
Let $\vec{F}$ be the oriented forest consisting of three vertices $0,1,2$ and one arc $(0,1)$. Let $\vec{F}_{1,2}$ and $\vec{F}_{2,1}$ be the oriented graphs obtained from $\vec{F}$ by adding the arcs $(1,2)$ and $(2,1)$, respectively. Clearly, $\vec{F}_{1,2}=\vec{P}_2$ and $\vec{F}_{2,1}=\vec{P}_{1,1}$. By Lemma~\ref{lem:edgeFlip}, we have
\[
\hom(\vec{P}_{2},\vec{T})+\hom(\vec{P}_{1,1},\vec{T})
=
\hom(\vec{F},\vec{T}) - O(v(\vec{T})^2).
\]
Since $\vec{F}$ is isomorphic to $\vec{P}_1\sqcup\vec{P}_0$, we have, by Lemma~\ref{lem:union},
\[
\hom(\vec{F},\vec{T})=\binom{v(\vec{T})}{2}v(\vec{T})\leq \frac{1}{2}v(\vec{T})^3.
\]
By combining the last two inequalities and plugging the result into \eqref{eq:unionForest}, we obtain
\[
\hom(\vec{P}_{2}\sqcup \vec{P}_{1,1},\vec{T})
\leq
\left(\frac{v(\vec{T})^3}{4}\right)^2
=
(1/2)^4v(\vec{T})^6,
\]
as desired. For an illustration of the proof, see Figure~\ref{fig:forest-picture}.
\end{proof}

\begin{figure}[htbp] 
\centering
\begin{tikzpicture}[x=0.58cm,y=0.95cm]

\node[vertex] (a0) at (2,0) {};
\node[vertex] (a1) at (3,0) {};
\node[vertex] (a2) at (4,0) {};
\draw[arc] (a0) -- (a1);
\draw[arc] (a1) -- (a2);

\node[vertex] (b0) at (5,0) {};
\node[vertex] (b1) at (6,0) {};
\node[vertex] (b2) at (7,0) {};
\draw[arc] (b0) -- (b1);
\draw[arc] (b2) -- (b1);

\begin{scope}[shift={(10,0)}]

\node[lbl] at (-1.5,0) {$\leq$};
\node[lbl] at (13.5,0) {\small by AM-GM and Lemma~\ref{lem:union}};

\node[lbl, anchor=east] at (1.5,0) {$\dfrac14\Bigl($};

\node[vertex] (c0) at (1.5,0) {};
\node[vertex] (c1) at (2.5,0) {};
\node[vertex] (c2) at (3.5,0) {};
\draw[arc] (c0) -- (c1);
\draw[arc] (c1) -- (c2);

\node[lbl] at (4.5,0) {$+$};

\node[vertex] (d0) at (5.5,0) {};
\node[vertex] (d1) at (6.5,0) {};
\node[vertex] (d2) at (7.5,0) {};
\draw[arc] (d0) -- (d1);
\draw[arc] (d2) -- (d1);

\node[lbl, anchor=west] at (7.5,0) {$\Bigr)^2$};

\end{scope}

\begin{scope}[shift={(9,-1.5)}]

\node[lbl] at (-0.5,0) {$\approx$};

\node[lbl] at (7.5,0) {\small by Lemma~\ref{lem:edgeFlip}};

\node[lbl, anchor=east] at (1.5,0) {$\dfrac14\Bigl($};

\node[vertex] (e0) at (1.5,0) {};
\node[vertex] (e1) at (2.5,0) {};
\draw[arc] (e0) -- (e1);

\node[vertex] (e2) at (3.5,0) {};

\node[lbl, anchor=west] at (3.5,0) {$\Bigr)^2$};

\end{scope}

\begin{scope}[shift={(9,-3)}]

\node[lbl] at (3.5,0) {\small by Lemma~\ref{lem:union}};
\node[lbl] at (-0.5,0) {$\approx$};

\node[lbl, anchor=east] at (1.5,0) {$\dfrac{1}{16}$};

\end{scope}

\end{tikzpicture}
\caption{
A pictorial summary of the proof of Proposition~\ref{prop:forest}.
}
\label{fig:forest-picture}
\end{figure}

\section{Reductions of Theorem~\ref{th:2blocks} Via Algebraic Expansion}
\label{sec:2blocks}

The focus of this section is on reducing Theorem~\ref{th:2blocks} to the case that both blocks have the same length. We start with the following standard application of the Cauchy--Schwarz inequality.

\begin{lem}
\label{lem:P1331reduction}
If $k\geq3$, then every tournament $\vec{T}$ satisfies
\[t(\vec{P}_{1,k},\vec{T})^2\leq t(\vec{P}_{1,3,3,1},\vec{T})t(\vec{P}_{k-3,k-3},\vec{T}).\]
\end{lem}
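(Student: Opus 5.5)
The plan is a single application of Cauchy--Schwarz, splitting $\vec{P}_{1,k}$ at the vertex at distance three from the central sink along the long block. Label the vertices as in the convention of Section~\ref{sec:tricks}. In $\vec{P}_{1,k}$ the arcs are $(v_0,v_1)$ and $(v_{i+1},v_i)$ for $1\leq i\leq k$, so $v_1$ is the unique non-leaf sink. Since $k\geq 3$, the vertex $v_4$ exists, and we cut $\vec{P}_{1,k}$ there into two pieces:
\begin{itemize}
\item the left piece $L$ on $v_0,\dots,v_4$, which is a copy of $\vec{P}_{1,3}$ whose right endpoint $v_4$ is the source of the length-$3$ block;
\item the right piece $R$ on $v_4,\dots,v_{k+1}$, which is a directed path with $k-3$ arcs ending at $v_4$ (possibly a single vertex when $k=3$).
\end{itemize}

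For a tournament $\vec{T}$ and $x\in V(\vec{T})$, let $A(x)$ be the number of homomorphisms $L\to\vec{T}$ sending $v_4$ to $x$, and let $B(x)$ be the number of homomorphisms $R\to\vec{T}$ sending $v_4$ to $x$. Since $L$ and $R$ share only the vertex $v_4$, we have $\hom(\vec{P}_{1,k},\vec{T})=\sum_x A(x)B(x)$. Cauchy--Schwarz then gives
\[
\hom(\vec{P}_{1,k},\vec{T})^2\leq \Big(\sum_x A(x)^2\Big)\Big(\sum_x B(x)^2\Big).
\]

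Next we identify both factors. The sum $\sum_x A(x)^2$ counts homomorphisms from two copies of $L$ glued at $v_4$. This graph is
\[
v_0\to v_1\leftarrow v_2\leftarrow v_3\leftarrow v_4\to v_3'\to v_2'\to v_1'\leftarrow v_0',
\]
which has blocks of lengths $1,3,3,1$ with the first block directed left to right, so it is exactly $\vec{P}_{1,3,3,1}$. Similarly, $\sum_x B(x)^2$ counts homomorphisms from two directed paths of length $k-3$ sharing their terminal vertex, which is $\vec{P}_{k-3,k-3}$. When $k=3$ this is $\vec{P}_0$, by the convention for zero-length blocks.

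The only remaining step, and the only place needing any care, is the normalization. Both sides must carry the same power of $v(\vec{T})$. On the left, $v(\vec{P}_{1,k})=k+2$, so squaring gives exponent $2k+4$. On the right, $v(\vec{P}_{1,3,3,1})=9$ and $v(\vec{P}_{k-3,k-3})=2k-5$, which also sum to $2k+4$. Dividing the inequality by $v(\vec{T})^{2k+4}$ therefore turns the homomorphism counts into homomorphism densities and yields the stated inequality. I do not expect any genuine obstacle beyond checking these two gluings and this vertex count.
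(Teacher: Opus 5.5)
Your proposal is correct and follows the paper's own proof: it cuts $\vec{P}_{1,k}$ at $v_4$ into $\vec{P}_{1,3}$ and a directed path of length $k-3$ ending at $v_4$. It then applies Cauchy--Schwarz to the rooted counts, identifies the two doubled gluings as $\vec{P}_{1,3,3,1}$ and $\vec{P}_{k-3,k-3}$, and normalizes by the matching vertex counts $2(k+2)=9+(2k-5)$.
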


\begin{proof}
Let $\vec{T}$ be a tournament. Given an oriented graph $\vec{H}$, a vertex $v\in V(\vec{H})$ and $u\in V(\vec{T})$, let $\hom_{v\mapsto u}(\vec{H},\vec{T})$ be the number of homomorphisms $\varphi:\vec{H}\to\vec{T}$ such that $\varphi(v)=u$. Then, by the Cauchy--Schwarz Inequality,
\begin{align*}
\hom(\vec{P}_{1,k},\vec{T}) &= \sum_{u\in V(\vec{T})}\hom_{v_4\mapsto u}(\vec{P}_{1,3},\vec{T})\cdot \hom_{v_0\mapsto u}(\cev{P}_{k-3},\vec{T})\\
&\leq\left(\sum_{u\in V(\vec{T})}\hom_{v_4\mapsto u}(\vec{P}_{1,3},\vec{T})^2\right)^{1/2} \left(\sum_{u\in V(\vec{T})}\hom_{v_0\mapsto u}(\cev{P}_{k-3},\vec{T})^2\right)^{1/2}\\
& = \hom(\vec{P}_{1,3,3,1},\vec{T})^{1/2}\hom(\vec{P}_{k-3,k-3},\vec{T})^{1/2}.
\end{align*}
The result now follows by dividing by $v(\vec{T})^{k+2}$ and squaring both sides. For an illustration of this proof, see Figure~\ref{fig:P1331reduction-picture}.
\end{proof}

\begin{figure}[htbp]
\centering
\begin{tikzpicture}[x=0.58cm,y=0.95cm]

\begin{scope}[shift={(-1,0)}]

\node[vertex]     (a0) at (2,0) {};
\node[vertex]     (a1) at (3,0) {};
\node[vertex]     (a2) at (4,0) {};
\node[vertex]     (a3) at (5,0) {};
\node[vertex] (a4) at (6,0) {};
\node[vertex]     (a5) at (7,0) {};
\node[vertex]     (a6) at (8,0) {};
\node[vertex]     (a7) at (10,0) {};
\node[vertex]     (a8) at (11,0) {};

\node (dotsa) at (9,0) {$\dots$};

\draw[arc] (a0) -- (a1);
\draw[arc] (a2) -- (a1);
\draw[arc] (a3) -- (a2);
\draw[arc] (a4) -- (a3);

\draw[arc] (a5) -- (a4);
\draw[arc] (a6) -- (a5);
\draw[arc] (a8) -- (a7);

\draw[decorate,decoration={brace,amplitude=4pt}] (2,0.3) -- (3,0.3)
node[midway,above=5pt] {$1$};
\draw[decorate,decoration={brace,amplitude=4pt}] (3,0.3) -- (11,0.3)
node[midway,above=5pt] {$k$};

\node[lbl] at (12,0) {$=$};

\end{scope}

\begin{scope}[shift={(13,0)}]
\node[lbl, anchor=east] at (1.5,0) {$\sum_{u\in V(\vec{T})}$};

\node[vertex]     (a0) at (2,0) {};
\node[vertex]     (a1) at (3,0) {};
\node[vertex]     (a2) at (4,0) {};
\node[vertex]     (a3) at (5,0) {};
\node[openvertex] (a4) at (6,0) {};
\node[lbl, below=2pt] at (a4) {$u$};
\node[vertex]     (a5) at (7,0) {};
\node[vertex]     (a6) at (8,0) {};
\node[vertex]     (a7) at (10,0) {};
\node[vertex]     (a8) at (11,0) {};

\node (dotsa) at (9,0) {$\dots$};

\draw[arc] (a0) -- (a1);
\draw[arc] (a2) -- (a1);
\draw[arc] (a3) -- (a2);
\draw[arc] (a4) -- (a3);

\draw[arc] (a5) -- (a4);
\draw[arc] (a6) -- (a5);
\draw[arc] (a8) -- (a7);

\draw[decorate,decoration={brace,amplitude=4pt}] (2,0.3) -- (3,0.3)
node[midway,above=5pt] {$1$};
\draw[decorate,decoration={brace,amplitude=4pt}] (3,0.3) -- (6,0.3)
node[midway,above=5pt] {$3$};
\draw[decorate,decoration={brace,amplitude=4pt}] (6,0.3) -- (11,0.3)
node[midway,above=5pt] {$k-3$};

\end{scope}

\begin{scope}[shift={(0,-1.8)}]

\node[lbl] at (-0.5,0) {$\leq$};
\node[lbl] at (-0.5,0.6) {\small C-S};

\node[lbl, anchor=east] at (3.5,0) {$\Bigl(\sum_{u\in V(\vec{T})}$};

\node[vertex]     (b0) at (4,0) {};
\node[vertex]     (b1) at (5,0) {};
\node[vertex]     (b2) at (6,0) {};
\node[vertex]     (b3) at (7,0) {};
\node[openvertex] (b4) at (8,0) {};
\node[lbl, below=2pt] at (b4) {$u$};
\node[vertex]     (b5) at (9,0) {};
\node[vertex]     (b6) at (10,0) {};
\node[vertex]     (b7) at (11,0) {};
\node[vertex]     (b8) at (12,0) {};

\draw[arc] (b0) -- (b1);
\draw[arc] (b2) -- (b1);
\draw[arc] (b3) -- (b2);
\draw[arc] (b4) -- (b3);

\draw[arc] (b4) -- (b5);
\draw[arc] (b5) -- (b6);
\draw[arc] (b6) -- (b7);
\draw[arc] (b8) -- (b7);

\draw[decorate,decoration={brace,amplitude=4pt}] (4,0.3) -- (5,0.3)
node[midway,above=5pt] {$1$};
\draw[decorate,decoration={brace,amplitude=4pt}] (5,0.3) -- (8,0.3)
node[midway,above=5pt] {$3$};
\draw[decorate,decoration={brace,amplitude=4pt}] (8,0.3) -- (11,0.3)
node[midway,above=5pt] {$3$};
\draw[decorate,decoration={brace,amplitude=4pt}] (11,0.3) -- (12,0.3)
node[midway,above=5pt] {$1$};

\node[lbl, anchor=west] at (12,0) {$\Bigr)^{1/2}$};

\node[lbl] at (14,0) {$\cdot$};

\node[lbl, anchor=east] at (17.5,0) {$\Bigl(\sum_{u\in V(\vec{T})}$};

\node[vertex]     (c0) at (18,0) {};
\node[vertex]     (c1) at (19,0) {};
\node[vertex]     (c2) at (21,0) {};
\node[openvertex] (c3) at (22,0) {};
\node[lbl, below=2pt] at (c3) {$u$};
\node[vertex]     (c4) at (23,0) {};
\node[vertex]     (c5) at (25,0) {};
\node[vertex]     (c6) at (26,0) {};

\node (dotsc1) at (20,0) {$\dots$};
\node (dotsc2) at (24,0) {$\dots$};

\draw[arc] (c0) -- (c1);
\draw[arc] (c2) -- (c3);
\draw[arc] (c4) -- (c3);
\draw[arc] (c6) -- (c5);

\draw[decorate,decoration={brace,amplitude=4pt}] (18,0.3) -- (22,0.3)
node[midway,above=5pt] {$k-3$};
\draw[decorate,decoration={brace,amplitude=4pt}] (22,0.3) -- (26,0.3)
node[midway,above=5pt] {$k-3$};

\node[lbl, anchor=west] at (26,0) {$\Bigr)^{1/2}$};

\end{scope}

\begin{scope}[shift={(-2,-3.6)}]

\node[lbl] at (1.5,0) {$=$};

\node[lbl, anchor=east] at (3.5,0) {$\Bigl($};

\node[vertex] (d0) at (3.5,0) {};
\node[vertex] (d1) at (4.5,0) {};
\node[vertex] (d2) at (5.5,0) {};
\node[vertex] (d3) at (6.5,0) {};
\node[vertex] (d4) at (7.5,0) {};
\node[vertex] (d5) at (8.5,0) {};
\node[vertex] (d6) at (9.5,0) {};
\node[vertex] (d7) at (10.5,0) {};
\node[vertex] (d8) at (11.5,0) {};

\draw[arc] (d0) -- (d1);
\draw[arc] (d2) -- (d1);
\draw[arc] (d3) -- (d2);
\draw[arc] (d4) -- (d3);

\draw[arc] (d4) -- (d5);
\draw[arc] (d5) -- (d6);
\draw[arc] (d6) -- (d7);
\draw[arc] (d8) -- (d7);

\draw[decorate,decoration={brace,amplitude=4pt}] (3.5,0.3) -- (4.5,0.3)
node[midway,above=5pt] {$1$};
\draw[decorate,decoration={brace,amplitude=4pt}] (4.5,0.3) -- (7.5,0.3)
node[midway,above=5pt] {$3$};
\draw[decorate,decoration={brace,amplitude=4pt}] (7.5,0.3) -- (10.5,0.3)
node[midway,above=5pt] {$3$};
\draw[decorate,decoration={brace,amplitude=4pt}] (10.5,0.3) -- (11.5,0.3)
node[midway,above=5pt] {$1$};

\node[lbl, anchor=west] at (11.5,0) {$\Bigr)^{1/2}$};

\node[lbl] at (13.5,0) {$\cdot$};

\node[lbl, anchor=east] at (15,0) {$\Bigl($};

\node[vertex] (e0) at (15,0) {};
\node[vertex] (e1) at (16,0) {};
\node[vertex] (e2) at (18,0) {};
\node[vertex] (e3) at (19,0) {};
\node[vertex] (e4) at (20,0) {};
\node[vertex] (e5) at (22,0) {};
\node[vertex] (e6) at (23,0) {};

\node (dotse1) at (17,0) {$\dots$};
\node (dotse2) at (21,0) {$\dots$};

\draw[arc] (e0) -- (e1);
\draw[arc] (e2) -- (e3);
\draw[arc] (e4) -- (e3);
\draw[arc] (e6) -- (e5);

\draw[decorate,decoration={brace,amplitude=4pt}] (15,0.3) -- (19,0.3)
node[midway,above=5pt] {$k-3$};
\draw[decorate,decoration={brace,amplitude=4pt}] (19,0.3) -- (23,0.3)
node[midway,above=5pt] {$k-3$};

\node[lbl, anchor=west] at (23,0) {$\Bigr)^{1/2}$};

\end{scope}

\end{tikzpicture}
\caption{
A pictorial summary of the proof of Lemma~\ref{lem:P1331reduction}. 
}
\label{fig:P1331reduction-picture}
\end{figure}

In light of Lemma~\ref{lem:P1331reduction}, it is useful to show that $\vec{P}_{1,3,3,1}$ is tournament anti-Sidorenko. Thus, most of the rest of this section is devoted to proving the following proposition.

\begin{prop}
\label{prop:P1331}
The oriented path $\vec{P}_{1,3,3,1}$ is tournament anti-Sidorenko.
\end{prop}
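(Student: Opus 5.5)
The plan is the algebraic expansion announced in the table of dependencies: write the tournament as $\frac12(J-I)+\frac12 A$-type data, i.e. let $M$ be the adjacency matrix of $\vec{T}$ with $n=v(\vec{T})$, and set $M=\frac12(J-I)+S$ with $S$ skew-symmetric with entries in $\{\pm\frac12\}$ off the diagonal. Note that $\hom(\vec{P}_{1,3,3,1},\vec{T})$ is a sum over walks of a product of entries of $M$ and $M^T=\frac12(J-I)-S$. Reading the path from $v_0$, its arcs are: forward, then three backward, then three forward, then one backward. The count can therefore be written as $\mathbf{1}^T M (M^T)^3 M^3 M^T\mathbf{1}$, up to the obvious orientation bookkeeping. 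Since the path has a reflection symmetry about $v_4$, it equals $\|x\|^2$ where $x=M^3M^T\mathbf{1}$ (or the appropriately reversed vector). This is the same identity used in Lemma~\ref{lem:P1331reduction}.

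I would expand $x$ as a polynomial in $S$ and $J$. Replacing $J-I$ by $J$ costs only $o(n^{9})$ overall, so I can work asymptotically and invoke Lemma~\ref{lem:asympGoodEnough}. Writing $U=\frac12 J$, each factor is $U\pm S$. The vector $x$ becomes a sum of terms $\mathbf{1}$-sandwiched words. Any occurrence of $U$ collapses the word into scalars of the form $\mathbf{1}^TS^j\mathbf{1}$, and the odd-$j$ ones vanish by skew-symmetry. Expanding $\|x\|^2$, the leading term is $(n^4/16)^2\cdot n$, which gives exactly $(1/2)^8 n^{9}$. The remaining terms are quadratic forms such as $\|S^3\mathbf{1}\|^2$, $\mathbf{1}^TS^2\mathbf{1}$ and $\|S\mathbf{1}\|^2$ times powers of $n$, plus cross terms. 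Here Observation~\ref{obs:1/2diag} and the path estimates in skew-symmetric matrices (Lemma~\ref{lem:pathU}, from \cite{Grzesik+23}) are the inputs. The basic facts are $\mathbf{1}^TS^{2j}\mathbf{1}=(-1)^j\|S^j\mathbf{1}\|^2$, and for an $S$ with $\pm\frac12$ entries, bounds of the form $\|S^{j+1}\mathbf{1}\|^2\le c\,n^2\|S^j\mathbf{1}\|^2$ hold (from $\|S\|_{op}\le n/2$).

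Concretely, I would write $a_j=\|S^j\mathbf{1}\|^2/n^{2j+1}$, so that $0\le a_j$ and $a_{j+1}\le a_j/4$. The goal is to express $t(\vec{P}_{1,3,3,1},\vec{T})-(1/2)^8$ as a polynomial in $a_1,a_2,a_3$ plus $o(1)$, and to show this polynomial is nonpositive on the feasible region. The main obstacle is sign control. Terms like $+\|S^3\mathbf{1}\|^2$ and mixed terms such as $(\mathbf{1}^TS^2\mathbf{1})^2$ enter with positive sign and must be dominated by the negative contributions (for instance $-c\,n^{7}\|S\mathbf{1}\|^2$). If the crude chain $a_{j+1}\le a_j/4$ is too weak, I would first try the sharper inequalities of Lemma~\ref{lem:pathU} on $\|S^2\mathbf{1}\|$ versus $\|S\mathbf{1}\|$, together with Cauchy--Schwarz on cross terms $\langle S\mathbf{1},S^3\mathbf{1}\rangle$. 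As a fallback, I would use Lemma~\ref{lem:edgeFlipU}, the edge-flip identity at the matrix level, to trade the worst positive term for path counts of already-controlled shorter paths.
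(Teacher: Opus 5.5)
Your reduction matches the paper's. Set $U:=A^*_{\vec T}-\frac12J_n$ (your $S$) and $x=(\frac12J_n+U)^3(\frac12J_n-U)\mathbf 1$, so that $\hom(\vec P_{1,3,3,1},A^*_{\vec T})=\|x\|^2$. With $a_k:=\|U^k\mathbf 1\|^2/n^{2k+1}=t(\vec P_{k,k},U)$, the expansion comes out exactly as
\[
t(\vec P_{1,3,3,1},A^*_{\vec T})=\tfrac1{256}+a_4+\tfrac34a_3-\tfrac3{16}a_2-\tfrac1{64}a_1+\tfrac18a_1^2+\tfrac14a_1^3-\tfrac12a_1a_2 .
\]
There is a $+a_4$ term, coming from $-U^4\mathbf 1$ in $x$, so this is not a polynomial in $a_1,a_2,a_3$ alone.

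\textbf{The gap is in the sign control.} The inequalities you propose are provably too weak to close it. The linear part is $x^4+\frac34x^3-\frac3{16}x^2-\frac1{64}x=x(x-\frac14)(x^2+x+\frac1{16})$, which vanishes at $x=\frac14$. Take $a_k=p\,4^{-k}$ for $k\ge1$ with $0<p\le\frac13$; these are the moments of $(1-p)\delta_0+p\delta_{1/4}$. This sequence satisfies:
\begin{itemize}
\item $a_{k+1}\le a_k/4$;
\item every Cauchy--Schwarz inequality $\langle U^i\mathbf 1,U^j\mathbf 1\rangle^2\le\|U^i\mathbf 1\|^2\|U^j\mathbf 1\|^2$, in particular $a_1^2\le a_2$ and $a_2^2\le a_1a_3$;
\item $a_1\le\frac1{12}$.
\end{itemize}
Yet the right-hand side above equals $\frac1{256}+\frac{p^3}{256}>\frac1{256}$.

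Everything that follows from $\|U\|_{\mathrm{op}}\le n/2$ and $\|\mathbf 1\|^2=n$ only says that $(a_k)$ is the moment sequence of a probability measure on $[0,\frac14]$. This sequence is one, so no argument of that type can work. Your fallbacks do not help either. Lemma~\ref{lem:pathU} only says odd paths have density zero; it gives no inequality between $\|U^2\mathbf 1\|$ and $\|U\mathbf 1\|$. Lemma~\ref{lem:edgeFlipU} is an identity that the expansion has already used.

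\textbf{The missing ingredient} is the sharp spectral bound $\|U\|_{\mathrm{op}}\le n/\pi$ for skew-symmetric $U$ with entries in $[-\frac12,\frac12]$, asymptotically attained by transitive tournaments. Equivalently, the $a_k$ are moments of a probability measure supported on $[0,1/\pi^2]$, which is exactly what Lemma~\ref{lem:polynomial} encodes.

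Since $1/\pi^2<\frac14$, the linear part now has slack, and the paper exploits it by splitting $\frac1{64}=\frac{35}{6336}+\frac1{99}$:
\begin{itemize}
\item The polynomial $x^4+\frac34x^3-\frac3{16}x^2-\frac{35}{6336}x$ is nonpositive on $[0,1/\pi^2]$. So Lemma~\ref{lem:polynomial} makes the corresponding combination of $a_1,\dots,a_4$ nonpositive.
\item The remainder $-\frac12a_1a_2+\frac14a_1^3+\frac18a_1^2-\frac1{99}a_1$ is at most $-\frac14a_1^3+\frac18a_1^2-\frac1{99}a_1$, using $a_1^2\le a_2$ (Lemma~\ref{lem:P11P22}). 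This is $\le 0$ because $0\le a_1\le\frac1{12}$ (Lemma~\ref{lem:P11upper}).
\end{itemize}
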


Our approach to proving Proposition~\ref{prop:P1331} is based on the ``algebraic expansion'' trick that is commonly used in papers on graph limits. For this, it is convenient to work in terms of matrices as opposed to tournaments.\footnote{It would also be equivalent to work in the setting of \emph{tournament limits} as in~\cite{Grzesik+23,NoelRanganathanSimbaqueba26,ChanGrzesikKralNoel20,ZhaoZhou20}; however, for the purposes of this paper, this would introduce several technicalities and would not bring any advantages.} Given an $n\times n$ matrix $A$ and $1\leq i,j\leq n$, let $A(i,j)$ be the entry of $A$ on the $i$th row and $j$th column. Given an oriented graph $\vec{H}$, define
\begin{equation}
\label{eq:homHA}
\hom(\vec{H},A):=\sum_{f:V(\vec{H})\to [n]}\left( \prod_{(u,v)\in A(\vec{H})} A(f(u),f(v))\right)
\end{equation}
and $t(\vec{H},A):=\frac{1}{n^{v(\vec{H})}}\hom(\vec{H},A)$. The \emph{adjacency matrix} of a tournament $\vec{T}$ with vertices $u_1,\dots,u_n$ is the $n\times n$ matrix $A_{\vec{T}}$ where the entry on the $i$th row and $j$th column is $1$ if $(u_i,u_j)\in A(\vec{T})$ and $0$ otherwise. It is easily observed that $\hom(\vec{H},\vec{T})=\hom(\vec{H},A_{\vec{T}})$ for any tournament $\vec{T}$. It will actually be more convenient to deal with an \emph{augmented adjacency matrix} for $\vec{T}$ which we define by 
\[A_{\vec{T}}^*:=A_{\vec{T}}+\frac{1}{2}I_n\]
where $I_n$ is the $n\times n$ identity matrix. The next observation follows from the fact that all entries of $A_{\vec{T}}^*$ and $A_{\vec{T}}$ are non-negative and every entry of $A_{\vec{T}}^*$ is greater than or equal to the corresponding entry of $A_{\vec{T}}$. 

\begin{obs}
\label{obs:1/2diag}
For every oriented graph $\vec{H}$ and tournament $\vec{T}$, 
\[\hom(\vec{H},\vec{T})\leq \hom(\vec{H},A_{\vec{T}}^*).\]
\end{obs}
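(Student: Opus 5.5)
The statement to prove is Observation~\ref{obs:1/2diag}. I will compare the defining sums of the two homomorphism counts term by term and use monotonicity of sums and products of non-negative reals.

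Fix a tournament $\vec{T}$ with vertices $u_1,\dots,u_n$. By the remark preceding the observation, $\hom(\vec{H},\vec{T})=\hom(\vec{H},A_{\vec{T}})$. By \eqref{eq:homHA} this equals
\[
\sum_{f:V(\vec{H})\to[n]}\ \prod_{(u,v)\in A(\vec{H})}A_{\vec{T}}(f(u),f(v)).
\]
The count $\hom(\vec{H},A_{\vec{T}}^*)$ is the same sum with $A_{\vec{T}}$ replaced by $A_{\vec{T}}^*$. So it suffices to compare the two sums summand by summand for each fixed $f$.

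Entrywise, $A_{\vec{T}}^*(i,j)=A_{\vec{T}}(i,j)$ when $i\neq j$, and $A_{\vec{T}}^*(i,i)=\tfrac12\ge 0=A_{\vec{T}}(i,i)$. Every entry of both matrices therefore lies in $\{0,\tfrac12,1\}$, so all entries are non-negative. We also have $A_{\vec{T}}^*\ge A_{\vec{T}}$ entrywise.

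A product of non-negative factors does not decrease when each factor is replaced by a larger one, by induction on the number of factors using $0\le a\le a'$ and $0\le b\le b'$, which give $ab\le a'b'$. Hence for every $f$ the summand for $A_{\vec{T}}$ is at most the summand for $A_{\vec{T}}^*$. Summing over all $f:V(\vec{H})\to[n]$ gives $\hom(\vec{H},\vec{T})\le\hom(\vec{H},A_{\vec{T}}^*)$.

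There is no real obstacle. The only point needing care is that the comparison uses non-negativity of all entries, which holds here; it would fail for the skew-symmetric matrices used later in the paper.
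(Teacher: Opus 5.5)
Your proposal is correct and follows the paper's own justification: $\hom(\vec{H},\vec{T})=\hom(\vec{H},A_{\vec{T}})$, all entries of $A_{\vec{T}}$ and $A_{\vec{T}}^*$ are non-negative, and $A_{\vec{T}}^*$ dominates $A_{\vec{T}}$ entrywise. Hence each summand in the defining sum, and therefore the whole sum, can only increase.
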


Given an oriented graph $\vec{H}$ and a set $S\subseteq A(\vec{H})$, let $\vec{H}[S]$ be the oriented graph with vertex set $V(\vec{H})$ and arc set $S$. Let $J_n$ be the $n\times n$ all-ones matrix. The next lemma provides a useful alternative expression for $\hom(\vec{H},A)$ for any matrix $A$.

\begin{lem}
\label{lem:expansion}
Let $\vec{H}$ be an oriented graph. For any $n\times n$ matrix $A$, if $U=A-\frac{1}{2}J_n$, then
\[\hom(\vec{H},A)=\sum_{S\subseteq A(\vec{H})}(1/2)^{a(\vec{H})-|S|}\hom(\vec{H}[S],U).\]
\end{lem}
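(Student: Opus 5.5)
The plan is to expand the product defining $\hom(\vec{H},A)$ term by term, substituting $A=U+\frac12 J_n$. By \eqref{eq:homHA},
\[\hom(\vec{H},A)=\sum_{f:V(\vec{H})\to[n]}\ \prod_{(u,v)\in A(\vec{H})}\Bigl(U(f(u),f(v))+\tfrac12\Bigr),\]
because every entry of $J_n$ equals $1$, so $A(i,j)=U(i,j)+\frac12$ for all $i,j$.

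For each fixed $f$, we distribute the product over the $a(\vec{H})$ arcs. Each arc contributes either its $U$-factor or the constant $\frac12$. Therefore the product equals
\[\sum_{S\subseteq A(\vec{H})}(1/2)^{a(\vec{H})-|S|}\prod_{(u,v)\in S}U(f(u),f(v)),\]
where $S$ is the set of arcs that contribute their $U$-factor.

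Next we sum over all $f$ and swap the two finite sums. For each fixed $S$, the inner sum $\sum_{f}\prod_{(u,v)\in S}U(f(u),f(v))$ is exactly $\hom(\vec{H}[S],U)$ by \eqref{eq:homHA}. This uses the fact that $\vec{H}[S]$ has the same vertex set $V(\vec{H})$, so $f$ still ranges over all maps $V(\vec{H})\to[n]$, including vertices that become isolated in $\vec{H}[S]$. This gives the claimed identity.

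There is no genuine obstacle here; the argument is a direct binomial-type expansion. The only point needing care is keeping $V(\vec{H}[S])=V(\vec{H})$. Isolated vertices each contribute a free factor of $n$, and this must be kept in $\hom(\vec{H}[S],U)$ rather than dropped. The case $S=\emptyset$ correctly yields $(1/2)^{a(\vec{H})}n^{v(\vec{H})}$.
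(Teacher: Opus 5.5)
Your proposal is correct and follows the same route as the paper: substitute $A(i,j)=\tfrac12+U(i,j)$, expand the product over arcs into a sum over subsets $S$, swap the finite sums, and identify each inner sum as $\hom(\vec{H}[S],U)$. Your remark that $\vec{H}[S]$ keeps the full vertex set $V(\vec{H})$, so isolated vertices contribute factors of $n$, matches the paper's definition of $\vec{H}[S]$.
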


\begin{proof}
By \eqref{eq:homHA},
\begin{align*}
\hom(\vec{H},A)&=\sum_{f:V(\vec{H})\to [n]} \left(\prod_{(u,v)\in A(\vec{H})} A(f(u),f(v))\right)=\sum_{f:V(\vec{H})\to [n]} \left(\prod_{(u,v)\in A(\vec{H})} (1/2+ U(f(u),f(v)))\right)\\
&=\sum_{f:V(\vec{H})\to [n]} \left(\sum_{S\subseteq A(\vec{H})}(1/2)^{a(\vec{H})-|S|}\prod_{(u,v)\in S} U(f(u),f(v))\right)\\
&=\sum_{S\subseteq A(\vec{H})}(1/2)^{a(\vec{H})-|S|} \left(\sum_{f:V(\vec{H})\to [n]}\prod_{(u,v)\in S} U(f(u),f(v))\right)\\
&=\sum_{S\subseteq A(\vec{H})}(1/2)^{a(\vec{H})-|S|}\hom(\vec{H}[S],U)
\end{align*}
as desired.
\end{proof}

Our proof of Proposition~\ref{prop:P1331} involves applying Lemma~\ref{lem:expansion} to $\vec{H}=\vec{P}_{1,3,3,1}$ and the matrix $A=A_{\vec{T}}^*$ for a tournament $\vec{T}$. Given an $n$-vertex tournament $\vec{T}$, the matrix $U_T:=A_{\vec{T}}^* - \frac{1}{2}J_n$ is skew-symmetric, meaning that $U^T=-U$, and all entries of $U$ are between $-1/2$ and $1/2$. Thus, we would benefit from knowing properties of $\hom(\vec{P},U)$ where $\vec{P}$ is an oriented path and $U$ is a skew-symmetric matrix with entries in $[-1/2,1/2]$. We borrow several such results from a paper of Grzesik, Il'kovi\v{c}, Kielak and Kr\'al'~\cite{Grzesik+23}.

\begin{lem}[Grzesik et al.~{\cite[Lemma~4]{Grzesik+23}}]
\label{lem:polynomial}
Let $p(x)=\sum_{k=1}^m\alpha_kx^k$ be a polynomial over $\mathbb{R}$. If $p(x)\geq 0$ for all $x\in [0,1/\pi^2]$, then 
\[\sum_{k=1}^m\alpha_k\cdot t(\vec{P}_{k,k},U)\geq 0\]
for every skew-symmetric matrix $U$ with entries in $[-1/2,1/2]$. 
\end{lem}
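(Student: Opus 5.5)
The approach is spectral: write each $t(\vec{P}_{k,k},U)$ as a moment of one non-negative measure supported on $[0,1/\pi^2]$, so that the weighted sum becomes an average of values of $p$ on that interval.

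\textbf{Step 1: a closed form for $\hom(\vec{P}_{k,k},U)$.} In $\vec{P}_{k,k}$ the two blocks are directed towards the middle vertex $v_k$. Summing over the image $m$ of $v_k$ gives
\[\hom(\vec{P}_{k,k},U)=\sum_{m}\Bigl(\mathbf{1}^{T}U^k e_m\Bigr)^2=\mathbf{1}^{T}U^k(U^k)^{T}\mathbf{1}.\]
Skew-symmetry gives $(U^k)^T=(-1)^kU^k$. Hence, with $W:=U^TU=-U^2$, this equals $\mathbf{1}^T W^k\mathbf{1}$.

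\textbf{Step 2: spectral decomposition.} The matrix $W$ is real symmetric and positive semidefinite. Let its eigenvalues be $\lambda_1^2,\dots,\lambda_n^2$, with orthonormal eigenvectors $w_j$, where $\pm i\lambda_j$ are the eigenvalues of $U$. Put $c_j:=(\mathbf{1}^Tw_j)^2/n\ge 0$ and $\mu_j:=\lambda_j^2/n^2\ge0$. Since $\vec{P}_{k,k}$ has $2k+1$ vertices,
\[t(\vec{P}_{k,k},U)=\sum_j c_j\mu_j^k, \qquad\text{so}\qquad \sum_{k=1}^m\alpha_k t(\vec{P}_{k,k},U)=\sum_j c_j\,p(\mu_j).\]
Because $p$ has no constant term, the constant moment never appears. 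So it suffices to show $\mu_j\le 1/\pi^2$ for all $j$, i.e.\ that the spectral norm satisfies $\|U\|\le n/\pi$. Then every $p(\mu_j)\ge 0$ and the weights $c_j$ are non-negative.

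\textbf{Step 3: the norm bound, which I expect to be the main obstacle.} The map $U\mapsto\|U\|$ is convex, and the feasible skew-symmetric matrices form a box in the free entries $U(i,j)$, $i<j$. So the maximum is attained at an extreme point, where the diagonal is zero and every off-diagonal entry is $\pm1/2$; this is $\frac12$ times the signed adjacency matrix of a tournament.

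For such $U$, $iU$ is Hermitian, so
\[\|U\|=\max_{\|x\|=1}|x^*Ux|\le\max_{\|x\|=1}\sum_{i<j}|x_i||x_j|\,\bigl|\sin(\theta_j-\theta_i)\bigr|,\]
where $x_j=|x_j|e^{i\theta_j}$. The right-hand side no longer depends on the tournament. Sort the angles modulo $\pi$, choosing for each pair $\{i,j\}$ the sign of $\sin(\theta_j-\theta_i)$ to match the transitive order. This shows the maximum equals the norm of the transitive matrix $\frac12(\mathbf{1}_{i<j}-\mathbf{1}_{i>j})$.

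That matrix is a skew-circulant-type Toeplitz matrix, and its eigenvalues are computed explicitly as $\frac{i}{2}\cot\frac{(2j-1)\pi}{2n}$. Hence $\|U\|\le\frac12\cot\frac{\pi}{2n}\le\frac{n}{\pi}$, using $\cot x\le 1/x$.

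The delicate point is the rearrangement step: one must check that after reordering the phases, the sum $\sum_{i<j}|x_i||x_j||\sin(\theta_j-\theta_i)|$ is really realized as $|x^*U_{\mathrm{trans}}x|$ for a suitable unit $x$. If this fails, I would instead cite the known extremal result of Grzesik et al.\ that transitive tournaments maximize this spectral norm. Given the norm bound, Steps 1 and 2 complete the proof.
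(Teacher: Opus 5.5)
Your proof is correct, and the step you flagged as delicate does go through. Replacing each $x_j$ by $\pm x_j$ moves every phase into $[0,\pi)$ without changing $\|x\|$ or any $|\sin(\theta_j-\theta_i)|$. Now permute coordinates so that the phases are non-decreasing, and call the resulting unit vector $y$, with phases $\phi_a$. It satisfies $y^*U_{\mathrm{trans}}y=i\sum_{a<b}|y_a||y_b|\sin(\phi_b-\phi_a)$, and every summand is non-negative. So your sum equals $|y^*U_{\mathrm{trans}}y|\le\|U_{\mathrm{trans}}\|$, and no fallback citation is needed.

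Two small remarks on your Step 3:
\begin{itemize}
\item The reduction to extreme points via convexity is superfluous. Your bound $|x^*Ux|\le\sum_{i<j}|x_i||x_j||\sin(\theta_j-\theta_i)|$ already uses only $|U(i,j)|\le 1/2$.
\item $U_{\mathrm{trans}}$ is exactly skew-circulant with first row $(0,\frac12,\dots,\frac12)$. Hence its eigenvalues are $\frac12\sum_{k=1}^{n-1}\omega^k=\frac12\cdot\frac{1+\omega}{1-\omega}=\frac{i}{2}\cot\frac{(2j-1)\pi}{2n}$ over the roots $\omega=e^{i(2j-1)\pi/n}$ of $\omega^n=-1$, confirming your formula.
\end{itemize}

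The paper contains no proof of this lemma. It imports it from Grzesik et al., who work with tournament limits, i.e.\ skew-symmetric kernels, where the extremal spectral radius is exactly $1/\pi$. The matrix version stated here then follows by passing to the step kernel of $U$. Your argument is a self-contained finite-dimensional substitute for that citation, built from the same two ingredients the constant $1/\pi^2$ reflects:
\begin{itemize}
\item $t(\vec{P}_{k,k},U)$ is the $k$th moment of a non-negative spectral measure of $-U^2$, taken against $\mathbf{1}$;
\item the spectral radius of $U$ is at most $n/\pi$.
\end{itemize}
It even yields the slightly sharper bound $\mu_j\le\bigl(\frac{1}{2n}\cot\frac{\pi}{2n}\bigr)^2<1/\pi^2$.
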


\begin{lem}[Grzesik et al.~{\cite[Equation~(6)]{Grzesik+23}}]
\label{lem:P11P22}
Every skew-symmetric matrix $U$ satisfies $t(\vec{P}_{1,1},U)^2\leq t(\vec{P}_{2,2},U)$.
\end{lem}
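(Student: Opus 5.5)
The plan is to rewrite both densities as squared Euclidean norms of vectors built from $U$ and the all-ones vector $\mathbf{1}\in\mathbb{R}^n$, and then finish with a single application of Cauchy--Schwarz. Let $U$ be an $n\times n$ skew-symmetric matrix, and let $x:=U^{T}\mathbf{1}$ be its vector of column sums, so $x_b=\sum_a U(a,b)$.

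\emph{Step 1: $\vec{P}_{1,1}$.} This path has arcs $(v_0,v_1)$ and $(v_2,v_1)$. Fixing the image $b$ of $v_1$ in \eqref{eq:homHA}, the sums over $v_0$ and $v_2$ factor, giving
\[\hom(\vec{P}_{1,1},U)=\sum_b\Bigl(\sum_a U(a,b)\Bigr)^2=\|x\|^2.\]

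\emph{Step 2: $\vec{P}_{2,2}$.} This path has arcs $v_0\to v_1\to v_2$ and $v_4\to v_3\to v_2$. Fixing the image $c$ of the central sink $v_2$, each half contributes
\[\sum_{a,b}U(a,b)U(b,c)=\sum_b x_bU(b,c)=(U^{T}x)_c .\]
Hence $\hom(\vec{P}_{2,2},U)=\|U^{T}x\|^2$. By skew-symmetry, $U^{T}x=-Ux$, so this equals $\|Ux\|^2$.

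\emph{Step 3: Cauchy--Schwarz.} Using $x=U^{T}\mathbf{1}$,
\[\|x\|^2=x^{T}U^{T}\mathbf{1}=(Ux)^{T}\mathbf{1}\le \|Ux\|\sqrt{n}.\]
Squaring gives $\|x\|^4\le n\|Ux\|^2$. Dividing by $n^6$ turns the left side into $t(\vec{P}_{1,1},U)^2=\|x\|^4/n^6$ and the right side into $\|Ux\|^2/n^5=t(\vec{P}_{2,2},U)$, which is the claimed inequality.

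Nothing here is a real obstacle; the only point requiring care is the sign convention when rewriting $\hom(\vec{P}_{2,2},U)$ in terms of $U^{T}x$. Any sign slip there is harmless, because every quantity enters squared.
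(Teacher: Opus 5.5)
Your proof is correct and complete. The two factorizations $\hom(\vec{P}_{1,1},U)=\|U^{T}\mathbf{1}\|^2$ and $\hom(\vec{P}_{2,2},U)=\|(U^{T})^{2}\mathbf{1}\|^2$ are right. The Cauchy--Schwarz step is valid because $\|x\|^2\ge 0$, and the normalizations ($n^{6}$ versus $n^{5}$) match.

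The paper gives no proof here; it cites Grzesik et al., whose setting is spectral. For skew-symmetric $U$, the matrix $M:=UU^{T}=U^{T}U$ is positive semidefinite and $n^{2k+1}t(\vec{P}_{k,k},U)=\mathbf{1}^{T}M^{k}\mathbf{1}$. Hence $k\mapsto t(\vec{P}_{k,k},U)$ is the moment sequence of a probability measure $\mu$ on the spectrum of $M/n^{2}$, which is presumably how Lemma~\ref{lem:polynomial} is obtained there. In that language the lemma is just $(\int x\,d\mu)^2\le\int x^2\,d\mu$. Your argument is this same Cauchy--Schwarz, $(\mathbf{1}^{T}M\mathbf{1})^{2}\le\|\mathbf{1}\|^{2}\|M\mathbf{1}\|^{2}$ with $M\mathbf{1}=Ux$, written in coordinates. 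It is more elementary and visibly needs no bound on the entries. The spectral picture, by contrast, gives the whole family of moment inequalities and Lemma~\ref{lem:polynomial} at once.

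One nuance on your closing remark. Sign slips are indeed harmless, but the skew-symmetry step (replacing $U^{T}x$ by $-Ux$, i.e.\ using normality of $U$) does real work. For the non-normal matrix $U^{T}=\bigl(\begin{smallmatrix}0&1\\0&0\end{smallmatrix}\bigr)$ one has $(U^{T})^{2}\mathbf{1}=0$ while $U^{T}\mathbf{1}\neq 0$, so the inequality fails.
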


\begin{lem}[Grzesik et al.~{\cite[Lemma~8]{Grzesik+23}}]
\label{lem:P11upper}
Every skew-symmetric matrix $U$ with entries in $[-1/2,1/2]$ satisfies $0\leq t(\vec{P}_{1,1},U)\leq \frac{1}{12}$.
\end{lem}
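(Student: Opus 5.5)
The plan is to compute $\hom(\vec{P}_{1,1},U)$ explicitly in terms of the column sums of $U$, and then bound it using the constraints that skew-symmetry places on those column sums. Label $\vec{P}_{1,1}$ with vertices $v_0,v_1,v_2$ and arcs $(v_0,v_1),(v_2,v_1)$. For $b\in[n]$ let $c_b:=\sum_{a=1}^n U(a,b)$ be the $b$th column sum. By \eqref{eq:homHA},
\[\hom(\vec{P}_{1,1},U)=\sum_{b}\sum_{a}\sum_{c}U(a,b)U(c,b)=\sum_{b=1}^n c_b^2 .\]
The lower bound $t(\vec{P}_{1,1},U)\geq 0$ follows at once. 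It remains to show $\sum_b c_b^2\leq n^3/12$.

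\textbf{Constraints on the column sums.} Skew-symmetry forces $U(a,a)=0$ and makes the entries inside any index set cancel in pairs. So for every $S\subseteq[n]$,
\[\sum_{b\in S}c_b=\sum_{a\notin S,\ b\in S}U(a,b),\qquad\text{hence}\qquad \Bigl|\sum_{b\in S}c_b\Bigr|\leq \frac{|S|(n-|S|)}{2},\]
because every entry has absolute value at most $1/2$. Taking $S=[n]$ gives $\sum_b c_b=0$.

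Sort the column sums so that $c_{(1)}\geq\dots\geq c_{(n)}$. Applying the bound to the set of the $k$ largest columns gives
\[\sum_{i\le k}c_{(i)}\leq \frac{k(n-k)}{2}\qquad\text{for each }k.\]
Now compare with the extremal vector $d$ defined by $d_k=(n+1-2k)/2$, which is the column-sum vector of a scaled transitive tournament. Its top-$k$ partial sums are exactly $k(n-k)/2$, and its total is $0$. Therefore $c$ is majorized by $d$.

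\textbf{Conclusion via convexity.} Since $x\mapsto x^2$ is convex, Karamata's inequality (equivalently, Schur-convexity of $\sum x_i^2$) gives
\[\sum_b c_b^2\leq\sum_{k=1}^n\Bigl(\frac{n+1-2k}{2}\Bigr)^2=\frac{n^3-n}{12}\leq\frac{n^3}{12}.\]
Dividing by $n^3$ yields $t(\vec{P}_{1,1},U)\le 1/12$.

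If one prefers to avoid citing majorization, the same step can be done by Abel summation. Write $\sum_b c_b^2=\sum_k c_{(k)}^2$ and bound it by $\sum_k c_{(k)}d_k$, using that $c$ is sorted and its partial sums are dominated by those of $d$. Then apply Cauchy--Schwarz to $\sum_k c_{(k)}d_k$.

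The only step needing care is this majorization/convexity step. The rest is routine bookkeeping.
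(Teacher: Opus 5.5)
Your proof is correct. Each step checks out:
\begin{itemize}
\item the identity $\hom(\vec{P}_{1,1},U)=\sum_b c_b^2$;
\item the cut identity $\sum_{b\in S}c_b=\sum_{a\notin S,\,b\in S}U(a,b)$, which bounds the top-$k$ partial sums by $k(n-k)/2$;
\item the majorization of the sorted column sums by $d_k=(n+1-2k)/2$;
\item the evaluation $\sum_k d_k^2=(n^3-n)/12$.
\end{itemize}
Your Abel-summation variant also works. With $D_k:=\sum_{i\le k}(d_i-c_{(i)})\ge 0$ and $D_n=0$, you get $\sum_k c_{(k)}(d_k-c_{(k)})=\sum_{k<n}D_k\bigl(c_{(k)}-c_{(k+1)}\bigr)\ge 0$, and Cauchy--Schwarz then gives $\|c\|\le\|d\|$.

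The paper does not prove this lemma; it imports it from Grzesik et al. By the paper's own footnote, that work is set in tournament limits, so applying it to the finite matrices $U_T$ implicitly passes to step kernels (a routine step). Your argument is instead a self-contained finite-matrix proof of what is essentially the classical fact that the transitive tournament maximizes the sum of squared scores: a weighted Landau condition plus Schur-convexity of $\sum x_i^2$.

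This buys three things. It works directly in the matrix setting the paper actually uses. It gives the marginally sharper bound $t(\vec{P}_{1,1},U)\le (1-n^{-2})/12$. And it shows this bound is attained by $U(a,b)=1/2$ for $a<b$ and $-1/2$ for $a>b$, i.e.\ by $U_T$ for a transitive tournament $\vec T$.
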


We will also use the following two standard facts about homomorphism densities into skew-symmetric matrices.

\begin{lem}[Grzesik et al.~{\cite[Proposition~1]{Grzesik+23}}]
\label{lem:edgeFlipU}
Let $\vec{H}$ and $\vec{F}$ be two oriented graphs that differ by reversing the orientation of a single arc. Then $t(\vec{H},U)=-t(\vec{F},U)$ for every skew-symmetric matrix $U$. 
\end{lem}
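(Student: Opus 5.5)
Since $U$ is skew-symmetric, reversing a single arc $(u,v)$ of $\vec{H}$ into $(v,u)$ replaces each factor $U(f(u),f(v))$ in \eqref{eq:homHA} by $U(f(v),f(u))=-U(f(u),f(v))$. The plan is to prove the claim directly from the definition \eqref{eq:homHA} using this fact, and it is not needed that the entries of $U$ lie in $[-1/2,1/2]$.

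Concretely, let $\vec{H}$ contain the arc $(u,v)$, and let $\vec{F}$ be obtained from $\vec{H}$ by replacing $(u,v)$ with $(v,u)$. Then $V(\vec{F})=V(\vec{H})$, so $\vec{H}$ and $\vec{F}$ have the same set of maps $f:V(\vec{H})\to[n]$, and the normalizing factor $n^{v(\vec{H})}=n^{v(\vec{F})}$ is the same for both. Fix one such $f$. The products $\prod_{(x,y)\in A(\vec{H})}U(f(x),f(y))$ and $\prod_{(x,y)\in A(\vec{F})}U(f(x),f(y))$ share every factor except one: the first contains $U(f(u),f(v))$ and the second contains $U(f(v),f(u))$. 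By skew-symmetry these two factors are negatives of each other, so the two products are negatives of each other for every $f$.

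Summing over all $f$ gives $\hom(\vec{H},U)=-\hom(\vec{F},U)$, and dividing by $n^{v(\vec{H})}$ gives $t(\vec{H},U)=-t(\vec{F},U)$.

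There is no real obstacle here. The only point that needs care is noting that the vertex set, and hence both the indexing set of the sum and the normalization, is unchanged by reversing the arc. This is immediate because reversing an arc only changes $A(\vec{H})$; it also never creates a duplicate arc, since $\vec{H}$ is an oriented graph and so $(v,u)\notin A(\vec{H})$.
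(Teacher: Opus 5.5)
Your proposal is correct and matches the paper's proof. Both isolate the single factor $U(f(x),f(y))$ in each summand of \eqref{eq:homHA}, replace it by $-U(f(y),f(x))$ using skew-symmetry, and sum over all maps $f$ to get $\hom(\vec{H},U)=-\hom(\vec{F},U)$, with the unchanged vertex set giving the same normalization for $t$.
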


\begin{proof}
Let $x,y$ be the pair of vertices such that $(x,y)\in A(\vec{H})$ and $(y,x)\in A(\vec{F})$. Then
\begin{align*}
\hom(\vec{H},U)&=\sum_{f:V(\vec{H})\to [n]}\left( \prod_{(u,v)\in A(\vec{H})} U(f(u),f(v))\right)\\
&=\sum_{f:V(\vec{H})\to [n]}\left(U(f(x),f(y)) \prod_{(u,v)\in A(\vec{H})\setminus\{(x,y)\}} U(f(u),f(v))\right)\\
&=-\sum_{f:V(\vec{H})\to [n]}\left(U(f(y),f(x)) \prod_{(u,v)\in A(\vec{H})\setminus\{(x,y)\}} U(f(u),f(v))\right)\\
&=-\hom(\vec{F},U)
\end{align*}
from which the result follows.
\end{proof}

\begin{lem}[Grzesik et al.~{\cite[Proposition~2]{Grzesik+23}}]
\label{lem:pathU}
Let $\vec{P}$ be an oriented path with an odd number of edges. Then $t(\vec{P},U)=0$ for every skew-symmetric matrix $U$. 
\end{lem}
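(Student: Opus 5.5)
The plan is to reduce to the directed path and then use a symmetry argument based on Lemma~\ref{lem:edgeFlipU}. Let $\vec{P}$ have $k$ arcs with $k$ odd, and let $U$ be skew-symmetric.

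\textbf{Step 1: reduce to the directed path $\vec{P}_k$.} The arcs of $\vec{P}$ join $v_{i-1}$ and $v_i$ for $1\le i\le k$. Reversing, one at a time, each arc of $\vec{P}$ oriented from $v_i$ to $v_{i-1}$ turns $\vec{P}$ into the directed path $\vec{P}_k$ (all arcs $(v_{i-1},v_i)$). If $r$ arcs are reversed, $r$ applications of Lemma~\ref{lem:edgeFlipU} give $t(\vec{P},U)=(-1)^r t(\vec{P}_k,U)$. Hence it suffices to show $t(\vec{P}_k,U)=0$.

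\textbf{Step 2: the directed path.} Reverse all $k$ arcs of $\vec{P}_k$. Applying Lemma~\ref{lem:edgeFlipU} once for each arc gives
\[t(\cev{P}_k,U)=(-1)^k t(\vec{P}_k,U)=-t(\vec{P}_k,U),\]
since $k$ is odd. On the other hand, $\cev{P}_k$ is isomorphic to $\vec{P}_k$ via the relabeling $v_i\mapsto v_{k-i}$. The quantity $\hom(\cdot,U)$ in \eqref{eq:homHA} is a sum over all maps $f:V(\vec{H})\to[n]$ and so is invariant under isomorphism, which gives $t(\cev{P}_k,U)=t(\vec{P}_k,U)$. Combining the two identities, $t(\vec{P}_k,U)=-t(\vec{P}_k,U)$, so $t(\vec{P}_k,U)=0$, and by Step 1 also $t(\vec{P},U)=0$.

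\textbf{Main subtlety.} Reversing all arcs of a general oriented path with an odd number of arcs need not give a path isomorphic to the original; for example, reversing $\vec{P}_{1,2}$ gives $\vec{P}_{2,1}$. That is why Step 1 first passes to $\vec{P}_k$, which is isomorphic to its own reversal. Apart from this, the argument is only bookkeeping of signs.
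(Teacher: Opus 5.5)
Your proof is correct and is essentially the paper's argument. The paper also uses Lemma~\ref{lem:edgeFlipU} to pass to a canonical path, but it chooses $\vec{P}_{\ell,\ell+1}$ (with $k=2\ell+1$) and flips only the central arc to reach the isomorphic path $\vec{P}_{\ell+1,\ell}$. You instead choose $\vec{P}_k$ and flip all $k$ arcs to reach the isomorphic path $\cev{P}_k$. Either way, an odd number of flips lands on an isomorphic copy.

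One small slip, in your closing remark only: reversing $\vec{P}_{1,2}$ gives $\cev{P}_{1,2}$, which has a source of degree two, not $\vec{P}_{2,1}$. In fact $\vec{P}_{2,1}$ is isomorphic to $\vec{P}_{1,2}$, so your example as written does not illustrate the (correct) point that reversal can change the isomorphism class.
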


\begin{proof}
Let $k=2\ell+1$ be the length of $\vec{P}$. By swapping the directions of arcs of $\vec{P}$, one by one, and applying Lemma~\ref{lem:edgeFlipU} we get that $t(\vec{P},U)$ is equal to either $t(\vec{P}_{\ell,\ell+1},U)$ or $-t(\vec{P}_{\ell,\ell+1},U)$. Also, by swapping the direction of the central arc, we see that $t(\vec{P}_{\ell,\ell+1},U) = -t(\vec{P}_{\ell+1,\ell},U)$. However, $\vec{P}_{\ell,\ell+1}$ and $\vec{P}_{\ell+1,\ell}$ are clearly isomorphic, and so $t(\vec{P}_{\ell,\ell+1},U) = t(\vec{P}_{\ell+1,\ell},U)$. Thus, we conclude that $t(\vec{P}_{\ell,\ell+1},U)=0$ and so $t(\vec{P},U)=0$ as well.
\end{proof}

We now present the proof of Proposition~\ref{prop:P1331}. 

\begin{proof}[Proof of Proposition~\ref{prop:P1331}]
Let $\vec{T}$ be a tournament with $n$ vertices and let $U_T:= A_{\vec{T}}^*-\frac{1}{2}J_n$. We start by applying Observation~\ref{obs:1/2diag} and Lemma~\ref{lem:expansion} and dividing both sides by $n^9$ to get
\[t(\vec{P}_{1,3,3,1},\vec{T}) \leq t(\vec{P}_{1,3,3,1},A_{\vec{T}}^*)=\sum_{S\subseteq A(\vec{P}_{1,3,3,1})}(1/2)^{8-|S|}t(\vec{P}_{1,3,3,1}[S],U_T).\]
In the expression on the right side, any term corresponding to a set $S$ such that $\vec{P}_{1,3,3,1}[S]$ has a component with an odd number of edges is zero by Lemmas~\ref{lem:pathU} and~\ref{lem:union}.\footnote{Technically, Lemma~\ref{lem:union} was only proved for homomorphisms to tournaments, not for general matrices. However, it is well known, and easy to see, that it holds for general matrices as well.} So, using Lemma~\ref{lem:edgeFlipU} and~\ref{lem:union}, we can express every remaining term on the right side as a polynomial in the expressions $t(\vec{P}_{k,k},U_T)$ for $k\in \{1,2,3,4\}$. It is equal to
\begin{equation}
\label{eq:expansion}
\begin{gathered}
t(\vec{P}_{4,4},U_T) + \frac{3}{4}t(\vec{P}_{3,3},U_T) - \frac{1}{2}t(\vec{P}_{1,1},U_T)t(\vec{P}_{2,2},U_T) + \frac{1}{4}t(\vec{P}_{1,1},U_T)^3-\frac{3}{16}t(\vec{P}_{2,2},U_T)\\
+\frac{1}{8}t(\vec{P}_{1,1},U_T)^2-\frac{1}{64}t(\vec{P}_{1,1},U_T) + \frac{1}{256}.
\end{gathered}
\end{equation}
We will be done if we can show that the above expression is always bounded above by $1/256$. To this end, define 
\[f(U_T):=t(\vec{P}_{4,4},U_T) + \frac{3}{4}t(\vec{P}_{3,3},U_T)-\frac{3}{16}t(\vec{P}_{2,2},U_T)-\frac{35}{6336}t(\vec{P}_{1,1},U_T)\]
and
\[g(U_T):=- \frac{1}{2}t(\vec{P}_{1,1},U_T)t(\vec{P}_{2,2},U_T)+ \frac{1}{4}t(\vec{P}_{1,1},U_T)^3 +\frac{1}{8}t(\vec{P}_{1,1},U_T)^2 - \frac{1}{99}t(\vec{P}_{1,1},U_T).\]
We observe that the expression in \eqref{eq:expansion} is precisely $f(U_T)+g(U_T)+\frac{1}{256}$. So, it suffices to show that $f(U_T)\leq 0$ and $g(U_T)\leq 0$. 

Consider first the polynomial
\[
p(x):=x^4+\frac{3}{4}x^3-\frac{3}{16}x^2-\frac{35}{6336}x.
\]
It is a simple calculus exercise to show that $p(x)\leq 0$ for all $x\in[0,1/\pi^2]$. Thus $-p(x)\geq0$ on this interval, so Lemma~\ref{lem:polynomial} gives $f(U_T)\leq 0$. Next, by Lemma~\ref{lem:P11P22}, we have that 
\[g(U_T)\leq - \frac{1}{4}t(\vec{P}_{1,1},U_T)^3 +\frac{1}{8}t(\vec{P}_{1,1},U_T)^2 - \frac{1}{99}t(\vec{P}_{1,1},U_T).\]
By Lemma~\ref{lem:P11upper}, we know that $0\leq t(\vec{P}_{1,1},U_T)\leq 1/12$. The polynomial $q(x)=-\frac{1}{4}x^3 + \frac{1}{8}x^2-\frac{1}{99}x$ satisfies $q(x)\leq 0$ for all $x\in [0,1/12]$ and so $g(U_T)\leq0$. This completes the proof. 
\end{proof}

Combining Lemma~\ref{lem:P1331reduction} and Proposition~\ref{prop:P1331} immediately yields the following. 

\begin{lem}
\label{lem:reduction}
For $k\geq5$, if $\vec{P}_{k-3,k-3}$ is tournament anti-Sidorenko, then so is $\vec{P}_{1,k}$.  
\end{lem}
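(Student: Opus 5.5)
The plan is to combine Lemma~\ref{lem:P1331reduction} with Proposition~\ref{prop:P1331} and the hypothesis on $\vec{P}_{k-3,k-3}$ directly.

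Fix $k\geq5$ and an arbitrary tournament $\vec{T}$. Since $k\geq 3$, Lemma~\ref{lem:P1331reduction} applies and gives
\[t(\vec{P}_{1,k},\vec{T})^2\leq t(\vec{P}_{1,3,3,1},\vec{T})\,t(\vec{P}_{k-3,k-3},\vec{T}).\]

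Next we bound each factor on the right separately.
\begin{itemize}
\item By Proposition~\ref{prop:P1331}, $t(\vec{P}_{1,3,3,1},\vec{T})\leq (1/2)^{8}$.
\item By the hypothesis that $\vec{P}_{k-3,k-3}$ is tournament anti-Sidorenko, $t(\vec{P}_{k-3,k-3},\vec{T})\leq (1/2)^{2k-6}$, since this path has $2(k-3)$ arcs.
\end{itemize}
The condition $k\geq5$ guarantees $k-3\geq2$, so $\vec{P}_{k-3,k-3}$ is a genuine two-block path of the type covered by the hypothesis. Note that Lemma~\ref{lem:P1331reduction} itself only needs $k\geq3$.

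Multiplying the two bounds gives
\[t(\vec{P}_{1,k},\vec{T})^2\leq (1/2)^{2k+2}.\]
Homomorphism densities are nonnegative, so taking square roots yields $t(\vec{P}_{1,k},\vec{T})\leq (1/2)^{k+1}=(1/2)^{a(\vec{P}_{1,k})}$.

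Since $\vec{T}$ was arbitrary, $\vec{P}_{1,k}$ is tournament anti-Sidorenko. This argument is exact for every tournament, so no appeal to Lemma~\ref{lem:asympGoodEnough} is needed. The only step needing care is the arc count, $8+2(k-3)=2(k+1)$, which matches the left-hand side exactly.
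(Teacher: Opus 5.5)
Your proposal is correct and is exactly the paper's argument: the paper obtains this lemma immediately by combining the Cauchy--Schwarz bound of Lemma~\ref{lem:P1331reduction} with Proposition~\ref{prop:P1331} and the hypothesis on $\vec{P}_{k-3,k-3}$. The arc count $8+2(k-3)=2(k+1)$ then gives the bound after taking square roots.
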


Thus, as we already know that $\vec{P}_{1,2},\vec{P}_{1,3}$ and $\vec{P}_{1,4}$ are tournament anti-Sidorenko by Propositions~\ref{prop:P12},~\ref{prop:P13} and~\ref{prop:P14}, respectively, Lemma~\ref{lem:reduction} implies that, in order to prove that $\vec{P}_{1,k}$ is tournament anti-Sidorenko for every $k\geq2$, it suffices to know that $\vec{P}_{j,j}$ is tournament anti-Sidorenko for every $j\geq2$. All oriented paths with exactly two blocks, each of which has length two, including the paths $\vec{P}_{j,j}$ for $j\geq2$, are covered by  Theorem~\ref{th:0mod4}. Putting this all together, we see that, once Theorem~\ref{th:0mod4} has been proven, Theorem~\ref{th:2blocks} will easily  follow.

As it turns out, it is useful to establish the case of $\vec{P}_{2,2}$ separately. This is quite easy to do using the ideas from this section. 

\begin{prop}
\label{prop:P22}
The oriented path $\vec{P}_{2,2}$ is tournament anti-Sidorenko. 
\end{prop}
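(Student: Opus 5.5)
The plan is to repeat the algebraic expansion argument from the proof of Proposition~\ref{prop:P1331}; here the expansion is short enough to write out in full. Let $\vec{T}$ be an $n$-vertex tournament and set $U_T:=A_{\vec{T}}^*-\frac12 J_n$. This matrix is skew-symmetric with entries in $[-1/2,1/2]$. By Observation~\ref{obs:1/2diag} and Lemma~\ref{lem:expansion},
\[t(\vec{P}_{2,2},\vec{T})\leq \sum_{S\subseteq A(\vec{P}_{2,2})}(1/2)^{4-|S|}t(\vec{P}_{2,2}[S],U_T).\]
Label the arcs of $\vec{P}_{2,2}$ as $e_1=(v_0,v_1)$, $e_2=(v_1,v_2)$, $e_3=(v_3,v_2)$ and $e_4=(v_4,v_3)$. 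By Lemmas~\ref{lem:pathU} and~\ref{lem:union} (extended to matrices), every $S$ for which $\vec{P}_{2,2}[S]$ has a component with an odd number of arcs contributes zero.

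This discards all sets of size $1$ and $3$, and also the non-adjacent pairs $\{e_1,e_3\},\{e_1,e_4\},\{e_2,e_4\}$, each of which gives two single-arc components. The surviving terms are as follows.
\begin{itemize}
\item $S=\emptyset$ contributes $1/16$.
\item $S=A(\vec{P}_{2,2})$ contributes $t(\vec{P}_{2,2},U_T)$.
\item The three adjacent pairs each carry the coefficient $1/4$. The pair $\{e_2,e_3\}$ induces a copy of $\vec{P}_{1,1}$. The pairs $\{e_1,e_2\}$ and $\{e_3,e_4\}$ each induce a directed $\vec{P}_2$, which differs from $\vec{P}_{1,1}$ by reversing one arc, so by Lemma~\ref{lem:edgeFlipU} each contributes $-t(\vec{P}_{1,1},U_T)$.
\end{itemize}
Altogether,
\[t(\vec{P}_{2,2},\vec{T})\leq \frac1{16}+t(\vec{P}_{2,2},U_T)-\frac14 t(\vec{P}_{1,1},U_T).\]

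It therefore suffices to show that $\frac14 t(\vec{P}_{1,1},U_T)-t(\vec{P}_{2,2},U_T)\geq 0$. I would apply Lemma~\ref{lem:polynomial} to the polynomial $p(x)=\frac14 x-x^2=x(\frac14-x)$. Since $1/\pi^2<1/4$, this polynomial is nonnegative on $[0,1/\pi^2]$. The lemma then gives the required inequality, hence $t(\vec{P}_{2,2},\vec{T})\leq 1/16=(1/2)^4$.

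The only delicate step is the bookkeeping of signs in the expansion: one must check which two-arc subgraphs are copies of $\vec{P}_{1,1}$ and which are directed paths, and confirm that every odd-component term really vanishes. Once the expansion is correct, the final inequality follows directly from Lemma~\ref{lem:polynomial}.
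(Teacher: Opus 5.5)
Your proposal is correct and follows the paper's proof: you expand $t(\vec{P}_{2,2},A_{\vec{T}}^*)$ in terms of $U_T$, arrive at $\frac{1}{16}+t(\vec{P}_{2,2},U_T)-\frac14 t(\vec{P}_{1,1},U_T)$, and conclude with Lemma~\ref{lem:polynomial} applied to $\frac14x-x^2$. The paper leaves the subset-by-subset bookkeeping implicit, and your explicit version is accurate, including the signs from the two directed $\vec{P}_2$ pairs and the vanishing of all odd-component terms.
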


\begin{proof}
Let $\vec{T}$ be a tournament with $n$ vertices and let $U_T:= A_{\vec{T}}^*-\frac{1}{2}J_n$. Analogous to the proof of Proposition~\ref{prop:P1331}, we have
\[t(\vec{P}_{2,2},\vec{T})\leq t(\vec{P}_{2,2},A_{\vec{T}}^*)=\sum_{S\subseteq A(\vec{P}_{2,2})}(1/2)^{4-|S|}t(\vec{P}_{2,2}[S],U_T).\]
Using Lemmas~\ref{lem:pathU},~\ref{lem:union} and~\ref{lem:edgeFlipU}, the right-hand side is equal to
\[t(\vec{P}_{2,2},U_T)-\frac{1}{4}t(\vec{P}_{1,1},U_T) + \frac{1}{16}.\]
The polynomial $\frac{1}{4}x-x^2$ is non-negative for all $x\in [0,1/4]$; in particular, it is non-negative for $x\in[0,1/\pi^2]$. Hence Lemma~\ref{lem:polynomial} gives
\[
t(\vec{P}_{2,2},U_T)-\frac{1}{4}t(\vec{P}_{1,1},U_T)\leq 0.
\]
Therefore $t(\vec{P}_{2,2},A_{\vec{T}}^*)\leq 1/16$, and so $\vec{P}_{2,2}$ is tournament anti-Sidorenko.
\end{proof}

\section{Making Sandwiches}
\label{sec:sandwich}

The purpose of this section is to present an information-theoretic approach for certifying that a given oriented forest is tournament anti-Sidorenko. The approach boils down to constructing a certificate which we call a ``sandwich.'' The main lemma of the section states that the existence of an $\vec{H}$-sandwich implies that $\vec{H}$ is tournament anti-Sidorenko. The later sections are devoted to constructing sandwiches for the paths and spiders and using them to prove our main theorems.

Before stating our main definition and lemma, we require the following standard terminology. We say that an oriented graph $\vec{D}$ contains another oriented graph $\vec{H}$ as a \emph{subgraph} if $V(\vec{H})\subseteq V(\vec{D})$ and $A(\vec{H})\subseteq A(\vec{D})$. A set $S\subseteq V(\vec{D})$ is \emph{connected} if, for every partition $\{A,B\}$ of $S$, there exists an arc of $\vec{D}$ with one endpoint in each of $A$ and $B$. A \emph{component} of $\vec{D}$ is a maximal non-empty connected subset of $V(\vec{D})$. For $S\subseteq V(\vec{D})$, the subgraph of $\vec{D}$ \emph{induced} by $S$, denoted $\vec{D}[S]$, is the oriented graph with vertex set $S$ containing all arcs of $\vec{D}$ that have both endpoints in $S$. Given a homomorphism $\psi:\vec{D}\to \vec{H}$ and an arc $(u,v)\in A(\vec{H})$, let $\psi^{-1}(u,v)$ be the set of all arcs $(x,y)$ of $\vec{D}$ such that $\psi(x)=u$ and $\psi(y)=v$. An \emph{involution} on a set $X$ is a function $\pi:X\to X$ such that $\pi\circ\pi$ is the identity.  

\begin{defn}
\label{def:sandwich}
Let $\vec{H}$ be an oriented forest. An \emph{$\vec{H}$-sandwich}\footnote{We use the term ``sandwich'' because the diagrams in this paper typically consist of $\vec{H}$ with additional structures drawn above and below $\vec{H}$; so, it is like a sandwich in which $\vec{H}$ is the filling.} is a tuple $\mathcal{S}=(\vec{D},\psi,\omega,\pi)$ satisfying the following properties: 
\begin{enumerate}
    \stepcounter{equation}\item\label{eq:sandwichForest} $\vec{D}$ is an oriented forest that contains $\vec{H}$ as a subgraph.
    \stepcounter{equation}\item\label{eq:sandwichMap} $\psi$ is a homomorphism from $\vec{D}$ to $\vec{H}$ which fixes every vertex of $\vec{H}$.
    \stepcounter{equation}\item\label{eq:sandwichWeights} $\omega:\mathcal{C}\to [0,\infty)$ is a weight function, where $\mathcal{C}$ is the collection of all components of $\vec{D}\setminus V(\vec{H})$.  Given $v\in V(\vec{D})$, we let $\omega(v)$ be equal to $\omega(C)$ if $v\in C$ for some $C\in\mathcal{C}$ and $0$ otherwise. Given an arc $(u,v)\in A(\vec{D})$, if there exists $C\in\mathcal{C}$ such that $\{u,v\}\cap C\neq \emptyset$, then we let $\omega(u,v):=\omega(C)$ and we let $\omega(u,v):=0$ otherwise.
    \stepcounter{equation}\item\label{eq:coverede} For every arc $(u,v)$ of $\vec{H}$, $\sum_{(x,y)\in \psi^{-1}(u,v)}\omega(x,y) = 1$.
    \stepcounter{equation}\item\label{eq:coveredv} For every vertex $v$ of $\vec{H}$, $\sum_{x\in \psi^{-1}(v)}\omega(x) \leq 1$.
    \stepcounter{equation}\item\label{eq:partition} $\{\mathcal{C}_1,\dots,\mathcal{C}_m\}$ is a partition of $\mathcal{C}$ such that $\omega$ is constant on $\mathcal{C}_j$ for all $1\leq j\leq m$.
    \stepcounter{equation}\item\label{eq:anti-Sidcomponents} $\vec{D}\left[\bigcup_{C\in \mathcal{C}_j}C\right]$ is tournament anti-Sidorenko for all $1\leq j\leq m$.
    \stepcounter{equation}\item\label{eq:singleAttachment} Every component $C\in\mathcal{C}$ is incident with at most one arc of $\vec{D}$ that has one endpoint in $C$ and the other endpoint in $V(\vec{H})$.
    \stepcounter{equation}\item\label{eq:involution} $\pi$ is an involution on $V(\vec{D})\setminus V(\vec{H})$ that is a homomorphism from $\vec{D}\setminus V(\vec{H})$ to itself.
    \stepcounter{equation}\item\label{eq:sameWeight} $\omega(v)=\omega(\pi(v))$ for all $v\in V(\vec{D})\setminus V(\vec{H})$.
    \stepcounter{equation}\item\label{eq:reverseArcs} Given $u\in V(\vec{H})$ and $v\in V(\vec{D})\setminus V(\vec{H})$, there is an arc from $u$ to $v$ in $\vec{D}$ if and only if there is an arc from $\pi(v)$ to $u$ in $\vec{D}$. 
\end{enumerate}
We say that $\mathcal{S}$ is a \emph{partial $\vec{H}$-sandwich} if all of the above conditions hold, except that \eqref{eq:coverede} is relaxed to
\begin{enumerate}
    \stepcounter{equation}\item\label{eq:coveredepartial} For every arc $(u,v)$ of $\vec{H}$, $\sum_{(x,y)\in \psi^{-1}(u,v)}\omega(x,y) \leq 1$.
\end{enumerate}
\end{defn}

Let $\mathcal S=(\vec{D},\psi,\omega,\pi)$ be an $\vec{H}$-sandwich or a partial $\vec{H}$-sandwich. For an arc $(u,v)\in A(\vec{H})$, define
\[
\cov_{\mathcal S}(u,v):=
\sum_{(x,y)\in \psi^{-1}(u,v)}\omega(x,y)
\] 
and, for $v\in V(\vec{H})$, define
\[
\cov_{\mathcal S}(v):=
\sum_{x\in \psi^{-1}(v)}\omega(x).
\]
Then \eqref{eq:coverede} can be rewritten as $\cov_\mathcal{S}(u,v)=1$ for all $(u,v)\in A(\vec{H})$ and \eqref{eq:coveredv} and \eqref{eq:coveredepartial} can be expressed in terms of $\cov_\mathcal{S}$ in a similar fashion. The main goal of this section is to prove the following lemma which says that, if an $\vec{H}$-sandwich exists, then $\vec{H}$ is tournament anti-Sidorenko.

\begin{lem}
\label{lem:entropyMain}
Let $\vec{H}$ be an oriented forest. If there exists an $\vec{H}$-sandwich, then $\vec{H}$ is tournament anti-Sidorenko.
\end{lem}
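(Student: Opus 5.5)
We argue via entropy. Fix a tournament $\vec{T}$ on $n$ vertices with $\hom(\vec{H},\vec{T})>0$, and let $X=(X_v)_{v\in V(\vec{H})}$ be a uniformly random homomorphism $\vec{H}\to\vec{T}$. Then $\log_2\hom(\vec{H},\vec{T})=\mathbb{H}(X)$. It therefore suffices to show
\[\mathbb{H}(X)\le v(\vec{H})\log_2 n - a(\vec{H}).\]
Since $\vec{H}$ is a forest, we may root each component. The chain rule and the Markov property along tree arcs then give
\[\mathbb{H}(X)=\sum_{v}\mathbb{H}(X_v)-\sum_{(u,v)\in A(\vec{H})} I(X_u;X_v).\]
Using the sandwich, we will construct a random map of all of $\vec{D}$ and compare entropies. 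The extra components are then "paid for" by the anti-Sidorenko property \eqref{eq:anti-Sidcomponents}.

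\textbf{Extending the distribution.} First extend $X$ to a random map $Y:V(\vec{D})\to V(\vec{T})$ as follows. For each component $C\in\mathcal{C}$ attached to $\vec{H}$ by a single arc (by \eqref{eq:singleAttachment}), say between $w\in C$ and $u\in V(\vec{H})$, we sample $Y|_C$ by pulling back through $\psi$. That is, $Y_x$ has the law of $X_{\psi(x)}$, glued conditionally independently along the tree structure of $\vec{D}$, with the arc $(u,w)$ or $(w,u)$ glued via the joint law of $X$ on the corresponding arc $\psi$ maps it to. Because $\vec{D}$ is a forest and $\psi$ is a homomorphism fixing $\vec{H}$, this yields a random homomorphism of $\vec{D}$ (or of each $\vec{D}[\bigcup_{C\in\mathcal{C}_j}C]$) into $\vec{T}$ whose vertex marginals and arc-pair marginals are copies of those of $X$. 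The entropy of $Y$ restricted to $\bigcup_{C\in\mathcal{C}_j}C$ is then
\[\sum_{x}\mathbb{H}(X_{\psi(x)})-\sum_{\text{arcs}}I(\cdot),\]
and since this is a random homomorphism of a tournament anti-Sidorenko graph, we get
\[\mathbb{H}\le |V|\log_2 n-\#\text{arcs}.\]
This gives, for each $j$, an inequality of the form
\[\sum_{(x,y)}I(X_{\psi(x)};X_{\psi(y)})\ \ge\ \#\text{arcs}-\sum_x\bigl(\log_2 n-\mathbb{H}(X_{\psi(x)})\bigr).\]

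\textbf{Handling attachment arcs.} The arcs joining $C$ to $\vec{H}$ are not inside the anti-Sidorenko piece. This is where the involution $\pi$ and \eqref{eq:sameWeight}, \eqref{eq:reverseArcs} enter. For $u\in V(\vec{H})$ with an out-arc to $v$ and in-arc from $\pi(v)$, the configuration is the $\pm$ construction of Lemma~\ref{lem:inOut}. By the AM--GM argument there, the probabilities $p^+$ and $p^-$ that a random image lies in the out- or in-neighbourhood of $X_u$ satisfy $p^+p^-\le 1/4$. In entropy form, the mutual informations across the pair of attachment arcs sum to at least $2$ bits, i.e. one per arc. Concretely, we choose the coupling of $Y|_C$ and $Y|_{\pi(C)}$ symmetrically so that the attachment arcs contribute exactly the quantities $-\log p^{\pm}$ averaged over $X_u$, and then apply convexity.

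\textbf{Combining.} Next we multiply the inequality for each $\mathcal{C}_j$ by its constant weight (possible by \eqref{eq:partition}) and sum. Condition \eqref{eq:coverede} ensures that each arc $(u,v)$ of $\vec{H}$ receives total weight exactly $1$ of lower bounds on $I(X_u;X_v)$, since each $\vec{D}$-arc $(x,y)$ has $I$-term equal to $I(X_u;X_v)$. Condition \eqref{eq:coveredv}, together with $\log_2 n-\mathbb{H}(X_v)\ge0$, controls the vertex deficits with total weight at most $1$. Plugging into the formula for $\mathbb{H}(X)$ then gives the desired bound. If needed, we use Lemma~\ref{lem:asympGoodEnough} to absorb lower-order errors, e.g. from the augmented diagonal or blow-ups.

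\textbf{Main obstacle.} The hard step is the attachment-arc bookkeeping. We must show that the pulled-back gluing through a single arc, combined with the $\pi$-symmetry, yields at least one bit per attachment arc while remaining a genuine random homomorphism. Without the single-attachment and involution conditions, the mutual information across these arcs would not be controlled. We would first try to prove a local entropy version of Lemma~\ref{lem:inOut}, namely
\[I(X_u;Z^+)+I(X_u;Z^-)\ge 2\quad\text{minus vertex deficits,}\]
for $Z^{\pm}$ i.i.d.-conditioned copies, and build the rest around it.
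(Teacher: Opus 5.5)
\textbf{Gap in the attachment step.} Several ingredients of your framework are correct: the identity $\mathbb{H}(X)=\sum_v\mathbb{H}(X_v)-\sum_{(u,v)}I(X_u;X_v)$, the reduction to a lower bound on $\sum I$ in terms of the deficits $\delta(v):=\log_2 n-\mathbb{H}(X_v)\ge0$, and the use of \eqref{eq:coverede} to redistribute the $I$-terms. The attachment step, which you rightly flag as the crux, does not close in the form you propose.

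Your local inequality $I(X_u;X_{\psi(w)})+I(X_u;X_{\psi(\pi(w))})\ge 2-\delta(\psi(w))-\delta(\psi(\pi(w)))$ is true; condition on $X_u=x$ and use $d^+(x)d^-(x)\le n^2/4$. The problem is what happens when you add it to your per-$\mathcal{C}_j$ inequalities. The deficit of each attachment vertex $w$ is then charged twice: once inside the anti-Sidorenko inequality for the class containing its component, and again in the attachment inequality. (For one-vertex components you can drop the vacuous component inequality, but not for larger ones.) So the coefficient of $\delta(v)$ becomes $\cov_{\mathcal S}(v)$ plus the weight of such attachment vertices in $\psi^{-1}(v)$, and \eqref{eq:coveredv} does not control this. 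For the sandwich $\mathcal S_4^*$ of Construction~\ref{constr:D4*}, your chain only gives $\sum_{i=0}^{3}I(X_{v_i};X_{v_{i+1}})\ge 4-\delta(v_0)-\delta(v_1)-2\delta(v_2)-\delta(v_3)-\delta(v_4)$, which is too weak.

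Two further points. Without the deficit correction, the ``two bits per pair'' claim is simply false: for $\vec{P}_2$ in a large transitive tournament the two mutual informations sum to about $1.16$ bits. Also, the AM--GM step $p^++p^-\le 1$ needs one common reference measure on $\hom(\vec{D}[C],\vec{T})$. The two pulled-back laws on $C$ and $\pi(C)$ differ in general, since $\psi\circ\pi\neq\psi$, so they cannot serve as that measure.

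\textbf{How to repair it.} Treat each pair $\{C,\pi(C)\}$ as a whole, and apply \eqref{eq:anti-Sidcomponents} to counts rather than to entropies of pulled-back maps. Given $X_u=x$, sample $Y|_C$ and $Y|_{\pi(C)}$ conditionally independently. They are supported on $h_x^+$ and $h_x^-$ homomorphisms, where $h_x^++h_x^-\le\hom(\vec{D}[C],\vec{T})$ (identify $\pi(C)$ with $C$ via $\pi$). Hence $\mathbb{H}(Y|_C,Y|_{\pi(C)}\mid X_u)\le 2\log_2\hom(\vec{D}[C],\vec{T})-2$. By the Markov structure, the left side equals $E_C+E_{\pi(C)}-I(X_u;X_{\psi(w)})-I(X_u;X_{\psi(\pi(w))})$. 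Here $E_C$ is the pulled-back entropy of $C$, and it already contains all internal-arc terms, so no separate component inequality is needed. For unattached $C$, use $E_C\le\log_2\hom(\vec{D}[C],\vec{T})$. Now sum with weights and bound $\sum_C\omega(C)\log_2\hom(\vec{D}[C],\vec{T})=\sum_j\omega_j\log_2\hom(\vec{U}_j,\vec{T})$ using \eqref{eq:anti-Sidcomponents}, where $\vec{U}_j:=\vec{D}[\bigcup_{C\in\mathcal{C}_j}C]$ and $\omega_j$ is the common weight on $\mathcal{C}_j$. Each $\delta(v)$ then appears exactly once, with coefficient $\cov_{\mathcal S}(v)\le1$, and the argument closes; it even works without rationalizing $\omega$.

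\textbf{Comparison with the paper.} This merged inequality is exactly what the paper's proof encodes, in counting form. The paper builds $\vec{F}$ with $q\omega(C)$ copies of each component and gets $t(\vec{H},\vec{T})^{q+1}\le t(\vec{F},\vec{T})$ from Lemma~\ref{lem:coveringEntropy}. It then gets $t(\vec{F},\vec{T})\le(1/2)^{qa(\vec{H})}t(\vec{H},\vec{T})$ by iterating Lemma~\ref{lem:inOut} and applying \eqref{eq:anti-Sidcomponents} to $\prod_C\hom(\vec{D}[C],\vec{T})^{q\omega(C)}$. It concludes via Lemma~\ref{lem:sandwich}.

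\textbf{A smaller issue.} Your per-class entropy identity for $Y$ restricted to $\bigcup_{C\in\mathcal{C}_j}C$ requires the components to be sampled independently. If they are all glued to a common $X$, they are correlated, and you only get the inequality in the wrong direction.
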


While Definition~\ref{def:sandwich} and~\ref{lem:sandwich} are new, they are rooted in several existing ideas. In particular, they are heavily inspired by the approach in our recent solution~\cite{ChenClemenNoel25+} to an extremal problem of Basit, Granet, Horsley, K\"ungen and Staden~\cite{Basit+25+} on alternating paths in edge-coloured graphs. Moreover, the idea underlying the conditions \eqref{eq:anti-Sidcomponents} and \eqref{eq:reverseArcs} is closely related to Lemma~\ref{lem:inOut}, which is essentially the same as~\cite[Theorem~1.3]{HeManiNieTungWei25+}.

Before embarking on the proof of Lemma~\ref{lem:entropyMain}, let us demonstrate its utility by giving an alternative proof of the result~\cite[Theorem~1]{SahSawhneyZhao23} that directed paths are tournament anti-Sidorenko.

\begin{prop}[Sah, Sawhney and Zhao~{\cite[Theorem~1]{SahSawhneyZhao23}}]
\label{prop:SSZ}
$\vec{P}_k$ is tournament anti-Sidorenko for all $k\geq0$.
\end{prop}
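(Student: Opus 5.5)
The plan is to apply Lemma~\ref{lem:entropyMain}: for each $k$ I will build an explicit $\vec{P}_k$-sandwich, using strong induction on $k$ to certify condition \eqref{eq:anti-Sidcomponents}. The cases $k\in\{0,1\}$ are trivial, since $t(\vec{P}_0,\vec{T})=1$ and $t(\vec{P}_1,\vec{T})\le 1/2$. So assume $k\ge2$ and that $\vec{P}_j$ is tournament anti-Sidorenko for all $j<k$. By Lemma~\ref{lem:union}, any disjoint union of directed paths, each with fewer than $k$ arcs, is then tournament anti-Sidorenko.

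The basic gadget is a pair of ``branches'' of length $L$ hung at a vertex $v_j$ of $\vec{P}_k=v_0\to v_1\to\dots\to v_k$, where $L\le j\le k-L$.
\begin{itemize}
\item The \emph{out-branch} is a directed path $v_j\to x_1\to\dots\to x_L$, with $\psi(x_i)=v_{j+i}$.
\item The \emph{in-branch} is a directed path $y_L\to\dots\to y_1\to v_j$, with $\psi(y_i)=v_{j-i}$.
\end{itemize}
Both maps are homomorphisms fixing $\vec{P}_k$, which gives \eqref{eq:sandwichMap}. Each branch minus $v_j$ is a single component, isomorphic to $\vec{P}_{L-1}$, attached to $\vec{H}$ by exactly one arc, which gives \eqref{eq:singleAttachment}. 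Define $\pi(x_i)=y_i$ and $\pi(y_i)=x_i$. This is an involution and a homomorphism of the components, because $x_i\to x_{i+1}$ maps to $y_i\to y_{i+1}$, which is indeed an arc. It also satisfies \eqref{eq:reverseArcs}, since $v_j\to x_1$ corresponds to $y_1\to v_j$. Give both branches of a pair the same weight, so that \eqref{eq:sameWeight} holds, and group components into classes of equal weight. Each class is a disjoint union of paths $\vec{P}_{L-1}$ with $L-1<k$, so \eqref{eq:anti-Sidcomponents} follows from the induction hypothesis. The attaching arc carries the component's weight, so a branch of weight $w$ contributes $w$ to the coverage of each of the $L$ arcs of $\vec{H}$ it maps onto. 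It also contributes $w$ to each of the $L$ vertices it maps onto, excluding $v_j$ itself.

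For even $k$, take a single pair at $j=k/2$ with $L=k/2$ and weight $1$. The in-branch covers the arcs $(v_{i-1},v_i)$ for $i\le k/2$ exactly once, and the out-branch covers the remaining arcs exactly once, so \eqref{eq:coverede} holds. Every vertex $v_i$ with $i\ne k/2$ is the image of exactly one branch vertex and $v_{k/2}$ of none, so \eqref{eq:coveredv} holds. This gives a $\vec{P}_k$-sandwich, and Lemma~\ref{lem:entropyMain} finishes the even case.

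The main obstacle is odd $k=2m+1$, where a single centred pair cannot work because the two branch lengths would need to be $m$ and $m+1$. My first attempt is to superpose several pairs with fractional weights: pairs at $v_m$ and at $v_{m+1}$ with $L=m$, together with shorter pairs at other positions. The weights $w_{j,L}$ would be chosen so that every arc has total coverage exactly $1$ and every vertex has coverage at most $1$. This is a small linear feasibility problem, and I would solve it first for $k=3,5,7$ to guess a pattern. If pure branch pairs turn out to be infeasible, the fallback is to let the components be slightly richer trees, for example a branch with a pendant extra arc. These would still be anti-Sidorenko by induction together with Proposition~\ref{prop:forest}, or by a mirrored pair whose $\pi$ swaps non-isomorphic-looking but equal-size pieces, and they would absorb the deficit on the end arcs. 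Verifying the boundary arcs $(v_0,v_1)$ and $(v_{k-1},v_k)$ is where I expect the difficulty to concentrate.
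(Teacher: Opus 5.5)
Your framework is the right one: build a $\vec P_k$-sandwich and apply Lemma~\ref{lem:entropyMain}. However, the branch gadget is not a legal sandwich component once $L\ge 2$, and the odd case is not done.

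\textbf{The branch gadget fails for $L\ge 2$.} In the in-branch $y_L\to\cdots\to y_1\to v_j$ the arcs are $y_{i+1}\to y_i$. So $y_i\to y_{i+1}$ is not an arc, and your $\pi$ is not a homomorphism.

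No other choice of $\pi$ works either. Condition \eqref{eq:reverseArcs} forces $\pi(x_1)$ to have an arc into $v_j$, i.e.\ to be the first vertex $y_1$ of some in-branch. An involution satisfying \eqref{eq:involution} is an automorphism of $\vec D\setminus V(\vec H)$. It would therefore map the directed path $x_1\to\cdots\to x_L$ isomorphically onto an in-branch while sending the source $x_1$ to that branch's sink, which is impossible.

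This is not a formality. Lemma~\ref{lem:inOut}, which turns \eqref{eq:reverseArcs} into the stripping-off bound, needs the piece hung out of $u$ and the piece hung into $u$ to be the same rooted graph. Only then is each homomorphism of the piece counted in at most one of $\hom^+_{\varphi,u,v}$ and $\hom^-_{\varphi,u,v}$, so that AM--GM applies. For a source-rooted out-piece and a sink-rooted in-piece this pointwise step fails. For example, take $L=2$ and a vertex whose in- and out-neighbourhoods $A,B$ both have size $n/2$, with $\vec T[A]$ and $\vec T[B]$ regular and all arcs from $B$ to $A$. This vertex starts about $\frac38n^2$ directed walks of length two and ends about as many, while $\hom(\vec P_1,\vec T)\approx\frac12 n^2$.

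In fact, the graph that the proof of Lemma~\ref{lem:entropyMain} would build from your even-case data is, up to an isolated vertex, two copies of $\vec P_k$ glued at their midpoints. The required bound would then make this orientation of the $(k/2,k/2,k/2,k/2)$-spider tournament anti-Sidorenko for every even $k$. Judging from the discussion of Remark~5.3 of~\cite{FoxHimwichManiZhou24+} in Section~\ref{sec:concl}, that is not expected for long legs. So your even case is valid only for $k=2$.

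\textbf{The odd case is missing, and your first attempt is infeasible.} This holds even if the gadgets were legal. A pair at $v_j$ of length $L$ and weight $w$ adds $w$ to the $2L$ consecutive arcs $e_{j-L},\dots,e_{j+L-1}$. Hence any combination of pairs has $\sum_i(-1)^i\cov_{\mathcal S}(e_i)=0$, whereas \eqref{eq:coverede} with $k$ odd forces this alternating sum to equal $1$.

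The missing idea is to use components with no arc to $\vec P_k$ at all. The definition allows these, and $\pi$ may fix them. This is exactly the paper's proof, which is uniform in $k\ge 2$ and needs no induction:
\begin{itemize}
\item for each $1\le i\le k-1$, attach single vertices $u^+_{i-1}\to v_i$ and $v_i\to u^-_{i+1}$ (single vertices are trivially isomorphic as rooted graphs, so $\pi(u^+_{i-1})=u^-_{i+1}$ is legal);
\item add two free arcs $w_0\to w_1$ and $w_{k-1}\to w_k$;
\item map every vertex by its subscript;
\item give every component weight $1/2$.
\end{itemize}
Then every arc and every vertex of $\vec P_k$ has cover exactly $1$, and every component is a single vertex or a single arc.
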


\begin{proof}
It is easily observed that $\vec{P}_0$ and $\vec{P}_1$ are tournament anti-Sidorenko, and so we assume that $k\geq2$. Let $\vec{D}_k$ be an oriented forest obtained from $\vec{P}_k$ by adding
\begin{itemize}
\item vertices $w_0,w_1,w_{k-1},w_k$, $u_0^+,\dots,u_{k-2}^+$, and $u_2^-,\dots,u_k^-$.
\item arcs $(w_0,w_1),$ $(w_{k-1},w_k)$, $(u_0^+,v_1),\dots,(u_{k-2}^+,v_{k-1})$, and $(v_1,u_2^-),\dots,(v_{k-1},u_k^-)$.
\end{itemize}
We define $\psi_k:\vec{D}_k\to\vec{P}_k$ so that each vertex of $\vec{D}_k$ is mapped to the vertex with the same subscript as it. That is, $\psi_k(v_i)=v_i$, $\psi_k(u_i^+)=v_i$, $\psi_k(u_i^-)=v_i$ and $\psi_k(w_i)=v_i$. We let $\omega_k$ be a weight function on the components of $\vec{D}_k\setminus V(\vec{P}_k)$ which assigns weight $1/2$ to each component. Finally, we let $\pi_k$ be an involution on $V(\vec{D}_k)\setminus V(\vec{P}_k)$ which fixes $w_0,w_1,w_{k-1},w_k$ and maps $u_{i-1}^+$ to $u_{i+1}^-$ for all $1\leq i\leq k-1$. See Figure~\ref{fig:D5}. It is easily observed that all of the properties of Definition~\ref{def:sandwich} hold for $(\vec{D}_k,\psi_k,\omega_k,\pi_k)$ with respect to $\vec{H}=\vec{P}_k$, where the partition in \eqref{eq:partition} is taken to be the trivial partition on the components of $\vec{D}_k\setminus V(\vec{P}_k)$. Thus, $(\vec{D}_k,\psi_k,\omega_k,\pi_k)$ is a $\vec{P}_k$-sandwich and the result follows by Lemma~\ref{lem:entropyMain}. 
\end{proof}

\begin{figure}[htbp]
\centering
\begin{tikzpicture}[x=1.7cm,y=1.0cm]

\node[vertex] (v0) at (0,0) {};
\node[lbl, above=0pt] at (v0) {$v_0$};
\node[vertex] (v1) at (1,0) {};
\node[lbl, above=0pt] at (v1) {$v_1$};
\node[vertex] (v2) at (2,0) {};
\node[lbl, above=0pt] at (v2) {$v_2$};
\node[vertex] (v3) at (3,0) {};
\node[lbl, above=0pt] at (v3) {$v_3$};
\node[vertex] (v4) at (4,0) {};
\node[lbl, above=0pt] at (v4) {$v_4$};
\node[vertex] (v5) at (5,0) {};
\node[lbl, above=0pt] at (v5) {$v_5$};

\draw[arc] (v0) -- (v1);
\draw[arc] (v1) -- (v2);
\draw[arc] (v2) -- (v3);
\draw[arc] (v3) -- (v4);
\draw[arc] (v4) -- (v5);

\node[vertex] (w0) at (0,2) {};
\node[lbl, above=0pt] at (w0) {$w_0$};
\node[vertex] (w1) at (1,2) {};
\node[lbl, above=0pt] at (w1) {$w_1$};
\node[vertex] (w4) at (4,2) {};
\node[lbl, above=0pt] at (w4) {$w_4$};
\node[vertex] (w5) at (5,2) {};
\node[lbl, above=0pt] at (w5) {$w_5$};

\node[vertex] (u0p) at (0,1) {};
\node[lbl, above=0pt] at (u0p) {$u_0^+$};
\node[vertex] (u2m) at (2,1) {};
\node[lbl, above=0pt] at (u2m) {$u_2^-$};

\node[vertex] (u1p) at (1,-1) {};
\node[lbl, below=0pt] at (u1p) {$u_1^+$};
\node[vertex] (u3m) at (3,-1) {};
\node[lbl, below=0pt] at (u3m) {$u_3^-$};

\node[vertex] (u2p) at (2,-1) {};
\node[lbl, below=0pt] at (u2p) {$u_2^+$};
\node[vertex] (u4m) at (4,-1) {};
\node[lbl, below=0pt] at (u4m) {$u_4^-$};

\node[vertex] (u3p) at (3,1) {};
\node[lbl, above=0pt] at (u3p) {$u_3^+$};
\node[vertex] (u5m) at (5,1) {};
\node[lbl, above=0pt] at (u5m) {$u_5^-$};

\draw[arc] (u0p) -- (v1);
\draw[arc] (v1) -- (u2m);

\draw[arc] (u1p) -- (v2);
\draw[arc] (v2) -- (u3m);

\draw[arc] (u2p) -- (v3);
\draw[arc] (v3) -- (u4m);

\draw[arc] (u3p) -- (v4);
\draw[arc] (v4) -- (u5m);

\draw[arc] (w0) -- (w1);
\draw[arc] (w4) -- (w5);

\end{tikzpicture}
\caption{
The oriented graph $\vec{D}_k$ used in the proof of Proposition~\ref{prop:SSZ} in the case $k=5$. The vertices are drawn in six columns and every vertex $u$ of $\vec{D}_5$ is mapped by $\psi_5$ to the vertex $v_i$ in the same column as $u$. Every component of $\vec{D}_5\setminus V(\vec{P}_5)$ has weight $1/2$. We let $\pi_5$ be the unique map which satisfies condition \eqref{eq:reverseArcs} and fixes $w_0,w_1,w_4,w_5$.
}
\label{fig:D5}
\end{figure}

\begin{rem}
\label{rem:convention}
All of our diagrams of $\vec{P}$-sandwiches and partial $\vec{P}$-sandwiches $\mathcal{S}=(\vec{D},\psi,\omega,\pi)$ for an oriented path $\vec{P}$ follow the same convention as Figure~\ref{fig:D5}. Near the middle of the diagram is a copy of $\vec{P}$ with its vertices $v_0,\dots,v_k$ drawn from left to right in $k+1$ columns. All vertices of $\vec{D}$ which are drawn in the $i$th column are mapped by $\psi$ to $v_i$. The weight function is indicated in the caption of the figure or in the diagram itself. Consequently, the values of $\cov_\mathcal{S}$ can be read from the figure by summing the weights in each column and between consecutive columns. In most cases, the involution $\pi$ is almost uniquely determined by the fact that it needs to satisfy \eqref{eq:involution}, \eqref{eq:sameWeight} and \eqref{eq:reverseArcs} and the partition of $\mathcal{C}$ is the trivial partition. Any potential ambiguities are explained in the caption of the figure. To keep the main text readable, most of these figures are collected in Appendix~\ref{app:gadget-atlas}.
\end{rem}

\begin{rem}
A useful feature of our approach is that the search for sandwiches can be automated using linear programming software, which can allow one to quickly discover constructions which are far more complex than any human could hope to find by hand. That is, given an oriented forest $\vec{H}$ and a collection $\mathcal{A}$ of oriented forests which are already known to be tournament anti-Sidorenko, one can create a variable for each tuple $(\vec{A},\psi_{\vec{A}},u,v,s)$ where $\vec{A}\in\mathcal{A},\psi_{\vec{A}}:\vec{A}\to\vec{H},u\in V(\vec{H}),v\in V(\vec{A})$ and $s\in\{-1,0,1\}$. This variable corresponds to the weight of a component of $V(\vec{D})\setminus V(\vec{H})$ which is isomorphic to $\vec{A}$ such that the restriction of $\psi$ to this component is $\psi_{\vec{A}}$ and $\vec{D}$ has an arc from $v$ to $u$ if $s=-1$, from $u$ to $v$ if $s=1$, and no arc between $u$ and $v$ if $s=0$. To ensure that the final construction satisfies all properties of Definition~\ref{def:sandwich} simply boils down to searching for a weight function which satisfies a particular system of linear constraints. This explains how all of the sandwiches in this paper were discovered in practice.
\end{rem}

We now turn our attention toward building up the ideas that we need to prove Lemma~\ref{lem:entropyMain}. Given a $\vec{H}$-sandwich $(\vec{D},\psi,\omega,\pi)$, our goal will be to construct an oriented graph $\vec{F}$ that is larger than $\vec{H}$ such that $t(\vec{H},\vec{T})$ is bounded above and below by two different expressions of $t(\vec{F},\vec{T})$ for every tournament $\vec{T}$, from which it will follow that $\vec{H}$ is tournament anti-Sidorenko. This is made more precise in the next lemma, which is inspired by~\cite[Lemma~2.1]{ChenClemenNoel25+}.

\begin{lem}
\label{lem:sandwich}
Let $\vec{H}$ be an oriented graph with $a(\vec{H})\neq0$. If there exists an oriented graph $\vec{F}$ such that $a(\vec{F})>a(\vec{H})$ and every tournament $\vec{T}$ satisfies
\begin{equation}
\label{eq:entropyBound}
t(\vec{H},\vec{T})^{a(\vec{F})/a(\vec{H})}\leq t(\vec{F},\vec{T})
\end{equation}
and 
\begin{equation}
\label{eq:strippingOff}
t(\vec{F},\vec{T})\leq (1/2)^{a(\vec{F})-a(\vec{H})}t(\vec{H},\vec{T}),
\end{equation}
then $\vec{H}$ is tournament anti-Sidorenko. 
\end{lem}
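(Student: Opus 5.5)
Combining \eqref{eq:entropyBound} and \eqref{eq:strippingOff} removes $\vec{F}$ and leaves an inequality involving only $t(\vec{H},\vec{T})$, which can then be solved. Fix a tournament $\vec{T}$ and write $x:=t(\vec{H},\vec{T})$, $a:=a(\vec{H})$ and $b:=a(\vec{F})$, so that $b>a>0$. Chaining the two hypotheses gives
\[
x^{b/a}\leq t(\vec{F},\vec{T})\leq (1/2)^{b-a}\,x .
\]
If $x=0$, then $x\le (1/2)^a$ trivially. Otherwise $x>0$, and dividing by $x$ gives
\[
x^{(b-a)/a}\leq (1/2)^{b-a}.
\]
Since $b-a>0$, we may raise both sides to the positive power $a/(b-a)$, because $y\mapsto y^{a/(b-a)}$ is increasing on $[0,\infty)$. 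This yields $x\le (1/2)^{a}$, i.e.\ $t(\vec{H},\vec{T})\le (1/2)^{a(\vec{H})}$.

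This holds for every tournament $\vec{T}$, so $\vec{H}$ is tournament anti-Sidorenko by definition. Lemma~\ref{lem:asympGoodEnough} is not needed, since both hypotheses are exact inequalities with no $o(1)$ terms.

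There is no real obstacle here. The only points needing care are the degenerate case $x=0$, which must be handled before dividing by $x$, and the positivity of $a$ and of $b-a$, which ensures the exponent manipulations preserve the direction of the inequalities; both are guaranteed by the hypotheses $a(\vec{H})\neq 0$ and $a(\vec{F})>a(\vec{H})$. The substantive work lies elsewhere: constructing $\vec{F}$ from a sandwich and proving \eqref{eq:entropyBound} via entropy and \eqref{eq:strippingOff} via the involution and Lemma~\ref{lem:inOut}-type arguments.
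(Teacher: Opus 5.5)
Your proof is correct and follows the paper's argument: chain the two hypotheses to eliminate $t(\vec{F},\vec{T})$, then solve for $t(\vec{H},\vec{T})$ using $a(\vec{F})>a(\vec{H})>0$. The paper raises to the power $a(\vec{H})/a(\vec{F})$ before simplifying, while you divide by $x$ and raise to $a/(b-a)$; this is an equivalent rearrangement, and your explicit treatment of the case $t(\vec{H},\vec{T})=0$ makes precise a point the paper's one-line simplification leaves implicit.
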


\begin{proof}
Let $\vec{T}$ be a tournament. We have 
\[t(\vec{H},\vec{T})\leq t(\vec{F},\vec{T})^{a(\vec{H})/a(\vec{F})}\leq \left((1/2)^{a(\vec{F})-a(\vec{H})}t(\vec{H},\vec{T})\right)^{a(\vec{H})/a(\vec{F})}.\]
Since $a(\vec{F})>a(\vec{H})$, this simplifies to $t(\vec{H},\vec{T})\leq (1/2)^{a(\vec{H})}$, as desired. 
\end{proof}

When applying Lemma~\ref{lem:sandwich}, we will always certify the inequality \eqref{eq:strippingOff} using Lemma~\ref{lem:inOut}. When certifying \eqref{eq:entropyBound}, we apply an entropy-based approach that originated in a paper of Kopparty and Rossman~\cite{KoppartyRossman11} and has been used in, e.g.,~\cite{ChenClemenNoel25+,BehagueMorrisonNoel24,BlekhermanRaymond22,BlekhermanRaymond23,BehagueCrudeleNoelSimbaqueba25,BitontiHoganNoelTsarev26+}. We will now review the basic properties of entropy that we will need. 

Let $X$ be a discrete random variable. Its \emph{range} is
\[
\rng(X):=\{x:\mathbb{P}(X=x)>0\}
\]
and the \emph{entropy} of $X$ is
\[
\mathbb{H}(X):=-\sum_{x\in\rng(X)}\mathbb{P}(X=x)\log_2(\mathbb{P}(X=x)).
\]
A useful way of thinking of the entropy of a random variable is as the average amount of information carried by the random variable $X$, measured in bits. As a simple example, if the range of $X$ has cardinality $2^n$ and all outcomes are equally likely, then $X$ has the same distribution as a uniformly random binary string of length $n$. So, revealing the outcome of $X$ is like revealing $n$ bits of information and so, naturally, the entropy of $X$ is equal to $n$.

If $X$ and $Y$ are discrete random variables and $y\in \rng(Y)$, then the \emph{conditional entropy} of $X$ given that $Y=y$ is defined to be
\[
\mathbb{H}(X\mid Y=y):=-\sum_{x\in \rng(X\mid Y=y)}\mathbb{P}(X=x\mid Y=y)\log_2(\mathbb{P}(X=x\mid Y=y)),
\]
and the \emph{conditional entropy} of $X$ given $Y$ is
\[
\mathbb{H}(X\mid Y):=\sum_{y\in \rng(Y)}\mathbb{P}(Y=y)\mathbb{H}(X\mid Y=y).
\]
We can think of $\mathbb{H}(X\mid Y)$ as the average amount of information that is gained by learning the outcome of $X$ after the outcome of $Y$ is already known. 

We will use the following three standard facts about entropy.

\begin{lem}[Maximality of the uniform distribution]
\label{lem:entropyUniform}
If $X$ is a discrete random variable with finite range, then
\[
\mathbb{H}(X)\leq \log_2(|\rng(X)|),
\]
with equality if and only if $X$ is uniformly distributed on $\rng(X)$.
\end{lem}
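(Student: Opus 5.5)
The plan is to deduce both the inequality and the equality case from Jensen's inequality applied to the concave function $\log_2$. Write $p(x):=\mathbb{P}(X=x)$ for $x\in\rng(X)$. Then
\[
\mathbb{H}(X)=\sum_{x\in\rng(X)}p(x)\log_2\!\left(\frac{1}{p(x)}\right),
\]
which is the expectation of $\log_2(1/p(X))$. Since the weights $p(x)$ are positive and sum to one, Jensen's inequality gives
\[
\mathbb{H}(X)\leq \log_2\!\left(\sum_{x\in\rng(X)}p(x)\cdot\frac{1}{p(x)}\right)=\log_2(|\rng(X)|).
\]
This step uses that the range is finite, so the sum has finitely many terms.

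For the equality case, I will use that $\log_2$ is strictly concave. In that situation, Jensen's inequality holds with equality exactly when the random quantity $1/p(X)$ is almost surely constant. Because every $x\in\rng(X)$ has $p(x)>0$, this means $p(x)$ takes the same value for all $x\in\rng(X)$. Hence $p(x)=1/|\rng(X)|$ for each such $x$, so $X$ is uniform on its range. Conversely, if $X$ is uniform on $\rng(X)$, a direct substitution into the definition of $\mathbb{H}(X)$ gives $\log_2(|\rng(X)|)$.

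Alternatively, I could avoid Jensen and argue via Gibbs' inequality. The idea is to compare $p$ with the uniform distribution $q$ on $\rng(X)$, applying $\ln y\leq y-1$ to $y=q(x)/p(x)$. Equality then holds only when $y=1$, which is the same condition $p=q$.

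I expect no real obstacle here. The only point requiring care is justifying the equality case through strict concavity, and restricting the sum to $\rng(X)$ so that every $p(x)$ is nonzero.
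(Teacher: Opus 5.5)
Your proof is correct. Jensen's inequality for the strictly concave function $\log_2$ is applied to $1/p(X)$, with the positive weights $p(x)$ on the finite set $\rng(X)$, and this gives both the bound and the equality case; the Gibbs-inequality variant works equally well. The paper does not prove this lemma, since it quotes it as one of three standard facts about entropy, so your argument supplies the usual textbook proof that the paper takes for granted.
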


\begin{lem}[Chain rule]
\label{lem:entropyChain}
For any discrete random variables $X_1,\dots,X_m$,
\[
\mathbb{H}(X_1,\dots,X_m)=\mathbb{H}(X_1)+\sum_{i=2}^m \mathbb{H}(X_i\mid X_1,\dots,X_{i-1}).
\]
\end{lem}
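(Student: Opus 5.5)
This is the standard chain rule for entropy, and I would prove it by induction on $m$. The case $m=1$ is trivial. For $m=2$ I would argue directly from the definitions. The plan is to show
\[
\mathbb{H}(X_1,X_2)=\mathbb{H}(X_1)+\mathbb{H}(X_2\mid X_1).
\]

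Write $p(x_1,x_2)=\mathbb{P}(X_1=x_1,X_2=x_2)$ for $(x_1,x_2)\in\rng(X_1,X_2)$. Every such pair has $x_1\in\rng(X_1)$, so $\mathbb{P}(X_1=x_1)>0$ and we may factor
\[
p(x_1,x_2)=\mathbb{P}(X_1=x_1)\,\mathbb{P}(X_2=x_2\mid X_1=x_1).
\]
Taking $-\log_2$ of this product splits $\mathbb{H}(X_1,X_2)$ into two sums.

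The first sum is $-\sum p(x_1,x_2)\log_2\mathbb{P}(X_1=x_1)$. Summing over $x_2$ for fixed $x_1$ gives marginal mass $\mathbb{P}(X_1=x_1)$, so this sum equals $\mathbb{H}(X_1)$.

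For the second sum, group the terms by $x_1$. For fixed $x_1$, the pairs $(x_1,x_2)$ in the joint range correspond exactly to $x_2\in\rng(X_2\mid X_1=x_1)$. Each such term has weight $\mathbb{P}(X_1=x_1)\mathbb{P}(X_2=x_2\mid X_1=x_1)$. Summing over $x_2$ therefore yields $\mathbb{P}(X_1=x_1)\,\mathbb{H}(X_2\mid X_1=x_1)$, and summing over $x_1$ gives $\mathbb{H}(X_2\mid X_1)$.

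For the inductive step, I apply the two-variable identity with the single random variable $Y=(X_1,\dots,X_{m-1})$ in place of $X_1$ and $X_m$ in place of $X_2$. This gives
\[
\mathbb{H}(X_1,\dots,X_m)=\mathbb{H}(X_1,\dots,X_{m-1})+\mathbb{H}(X_m\mid X_1,\dots,X_{m-1}).
\]
The induction hypothesis then expands the first term into the required sum.

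The only point needing care is the bookkeeping of ranges, so that no $\log 0$ appears. Conditioning is only performed on values of positive probability, and the joint range decomposes as a disjoint union over $x_1\in\rng(X_1)$ of $\{x_1\}\times\rng(X_2\mid X_1=x_1)$. This holds because $p(x_1,x_2)>0$ if and only if both factors are positive. No sums are infinite in the finite-range setting used in the paper. For countable ranges one should assume the entropies are finite, so that the sums can be rearranged absolutely; in that case the identity holds without change.
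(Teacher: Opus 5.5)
Your argument is correct: it is the standard textbook derivation (factor the joint mass as marginal times conditional, split the logarithm, regroup by $x_1$, then induct by treating $(X_1,\dots,X_{m-1})$ as a single variable), and the paper gives no proof of its own, simply quoting the chain rule as one of three standard facts about entropy. One small remark: your finiteness caveat for countable ranges is unnecessary, since every term $-p(x_1,x_2)\log_2\mathbb{P}(X_1=x_1)$ and $-p(x_1,x_2)\log_2\mathbb{P}(X_2=x_2\mid X_1=x_1)$ is nonnegative, so the regrouping is valid in $[0,\infty]$ regardless.
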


\begin{lem}[Deconditioning]
\label{lem:entropyDecon}
For any discrete random variables $X,Y$ and $Z$,
\[
\mathbb{H}(X\mid Y,Z)\leq \mathbb{H}(X\mid Z)
\]
with equality if and only if $X$ and $Y$ are conditionally independent given $Z$.
\end{lem}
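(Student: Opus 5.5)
The plan is to reduce the statement to the unconditional inequality $\mathbb{H}(X\mid Y)\leq \mathbb{H}(X)$ applied fibrewise over the values of $Z$, and to prove that inequality via Jensen's inequality for the strictly concave function $\log_2$. First, fix $z\in\rng(Z)$ and let $X',Y'$ have the joint law of $(X,Y)$ conditioned on $Z=z$. Unwinding the definitions,
\[
\mathbb{H}(X\mid Y,Z)=\sum_{z\in\rng(Z)}\mathbb{P}(Z=z)\,\mathbb{H}(X'\mid Y')\quad\text{and}\quad \mathbb{H}(X\mid Z)=\sum_{z\in\rng(Z)}\mathbb{P}(Z=z)\,\mathbb{H}(X').
\]
The second identity is the definition. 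The first uses that conditioning on the pair $(Y,Z)=(y,z)$ is the same as conditioning $X'$ on $Y'=y$, and that $\mathbb{P}(Y=y,Z=z)=\mathbb{P}(Z=z)\mathbb{P}(Y'=y)$. Hence it suffices to prove $\mathbb{H}(X'\mid Y')\leq\mathbb{H}(X')$ for every $z$, with equality if and only if $X'$ and $Y'$ are independent. Then equality in the lemma holds if and only if it holds for every $z$ (all weights $\mathbb{P}(Z=z)$ being positive), which is exactly conditional independence given $Z$.

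For the unconditional statement, write $p(x,y)=\mathbb{P}(X'=x,Y'=y)$ with marginals $p(x)$ and $p(y)$. Then
\[
\mathbb{H}(X'\mid Y')-\mathbb{H}(X')=\sum_{(x,y):\,p(x,y)>0}p(x,y)\log_2\frac{p(x)p(y)}{p(x,y)}.
\]
By Jensen's inequality for the concave function $\log_2$, applied with the probability weights $p(x,y)$, this is at most
\[
\log_2\sum_{p(x,y)>0}p(x)p(y)\leq\log_2 1=0.
\]
If the ranges are infinite, the sums are handled by the usual convergence conventions for entropy. A cleaner route is to note that the quantity equals $-D(p_{XY}\,\|\,p_Xp_Y)$ and to invoke Gibbs' inequality, whose proof is this same Jensen step.

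The only delicate point is the equality case. Equality in Jensen's inequality for the strictly concave $\log_2$ forces the ratio $p(x)p(y)/p(x,y)$ to be constant on the support of $p(x,y)$. Equality in the second inequality forces the support of $p(x,y)$ to contain every pair with $p(x)p(y)>0$. Together these give that the constant is $1$, that is $p(x,y)=p(x)p(y)$ everywhere, which is independence. Conversely, independence makes every conditional law $\mathbb{P}(X'=\cdot\mid Y'=y)$ equal to the law of $X'$, so the two entropies coincide. I expect this equality bookkeeping, namely tracking the support condition rather than just the value of the ratio, to be the only step needing care; the rest is routine manipulation of the definitions.
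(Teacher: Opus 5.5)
Your proof is correct. The paper gives no proof of this lemma: it lists it, with maximality of the uniform distribution and the chain rule, as a standard fact. So rather than matching an argument, you supply the usual textbook one. You reduce fibrewise over $\{Z=z\}$ to $\mathbb{H}(X\mid Y)\leq\mathbb{H}(X)$, write the deficit as $-D(p_{XY}\,\|\,p_Xp_Y)$, and conclude by Jensen. Your equality analysis is also right, including its one subtle point. Strict concavity only makes $p(x)p(y)/p(x,y)$ constant on the support of $p_{XY}$. It is equality in $\sum_{p(x,y)>0}p(x)p(y)\leq 1$ that forces this support to contain every pair with $p(x)p(y)>0$, and hence forces the constant to be $1$.

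One thing to fix is your sentence on infinite ranges. No convergence convention rescues the \emph{only if} part of the equality clause when entropies are infinite. For example, take $Z$ constant, $X$ supported on $\mathbb{N}$ with $\mathbb{H}(X)=\infty$ and positive mass on both parities, and $Y$ the parity of $X$. Since $Y$ is a function of $X$, the chain rule (valid for these nonnegative series) gives $\infty=\mathbb{H}(X,Y)=\mathbb{H}(Y)+\mathbb{H}(X\mid Y)\leq 1+\mathbb{H}(X\mid Y)$. Hence $\mathbb{H}(X\mid Y)=\mathbb{H}(X)=\infty$, although $X$ and $Y$ are dependent. So the equality characterization, and your subtraction $\mathbb{H}(X'\mid Y')-\mathbb{H}(X')$, need a hypothesis such as $\mathbb{H}(X\mid Z)<\infty$, for instance finite ranges.

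This costs nothing in the paper, where every random variable is finitely supported (a random homomorphism into a finite tournament). Moreover, the direction the paper actually uses holds with no finiteness assumption, by the argument in your final sentence applied to each fibre. That direction is that conditional independence implies equality, used for instance for additivity of entropy over independent components in Lemma~\ref{lem:forestEntropy}.
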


Our next goal is to obtain a formula for the entropy of a uniformly random homomorphism from an oriented forest. If $\vec{F}$ is an oriented forest and $v\in V(\vec{F})$, then $d_{\vec{F}}(v)$ denotes the number of arcs that are incident with $v$; i.e., it is the degree of $v$ in the unoriented forest underlying $\vec{F}$.

\begin{lem}
\label{lem:forestEntropy}
Let $\vec{F}$ be an oriented forest and let $\vec{G}$ be an oriented graph with $\hom(\vec{F},\vec{G})\geq 1$. Let $\varphi$ be a uniformly random homomorphism from $\vec{F}$ to $\vec{G}$. Then
\[
\mathbb{H}(\varphi)=\sum_{(u,v)\in A(\vec{F})}\mathbb{H}(\varphi(u),\varphi(v))-\sum_{v\in V(\vec{F})}(d_{\vec{F}}(v)-1)\mathbb{H}(\varphi(v)).
\]
\end{lem}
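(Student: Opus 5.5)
The plan is to prove the formula by induction on the number of arcs of $\vec{F}$, exploiting the tree structure through the chain rule and a conditional independence property of uniform homomorphisms from forests.

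\textbf{Components.} First reduce to the case where $\vec{F}$ is connected. If $\vec{F}=\vec{F}_1\sqcup\vec{F}_2$, then by (the proof of) Lemma~\ref{lem:union} a uniformly random homomorphism $\varphi$ from $\vec{F}$ restricts to independent uniform homomorphisms on $\vec{F}_1$ and $\vec{F}_2$. Hence $\mathbb{H}(\varphi)=\mathbb{H}(\varphi|_{V(\vec{F}_1)})+\mathbb{H}(\varphi|_{V(\vec{F}_2)})$. The right-hand side of the formula is also additive over components, because degrees and marginal distributions are computed componentwise. The marginals are correct because, by independence, the law of $\varphi|_{V(\vec{F}_i)}$ is uniform on $\hom(\vec{F}_i,\vec{G})$.

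\textbf{Base case.} For a single vertex $v$, the sum over arcs is empty and the vertex term is $-(0-1)\mathbb{H}(\varphi(v))=\mathbb{H}(\varphi(v))=\mathbb{H}(\varphi)$, so the formula holds.

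\textbf{Inductive step.} Let $\vec{F}$ be a tree with at least one arc. Choose a leaf $\ell$ whose unique neighbour is $p$, and let $\vec{F}'=\vec{F}-\ell$. The key structural fact is that, conditioned on $\varphi(p)=x$, the value $\varphi(\ell)$ is uniform over the in- or out-neighbours of $x$ (depending on the arc's orientation) and independent of $\varphi|_{V(\vec{F}')}$. This holds because $\hom(\vec{F},\vec{G})=\sum_{\varphi'}\deg^{\pm}(\varphi'(p))$, summed over homomorphisms $\varphi'$ from $\vec{F}'$.

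Two consequences follow. First, by the chain rule (Lemma~\ref{lem:entropyChain}) and the equality case of Lemma~\ref{lem:entropyDecon},
\[\mathbb{H}(\varphi)=\mathbb{H}(\varphi|_{V(\vec{F}')})+\mathbb{H}(\varphi(\ell)\mid\varphi(p))=\mathbb{H}(\varphi|_{V(\vec{F}')})+\mathbb{H}(\varphi(p),\varphi(\ell))-\mathbb{H}(\varphi(p)).\]
Second, the restriction $\varphi|_{V(\vec{F}')}$ is not uniform on $\hom(\vec{F}',\vec{G})$; instead it is weighted by $\deg^{\pm}(\varphi'(p))$. So induction cannot be applied to it directly, and the inductive hypothesis must be strengthened.

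\textbf{Strengthened statement (the main obstacle).} I will prove the formula for any random homomorphism $\varphi$ that is a ``Markov random field'' on the tree: for every vertex $v$, the branches of the tree hanging off $v$ are conditionally independent given $\varphi(v)$.

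First check that the uniform homomorphism has this property. Its law factorises as a product of arc indicators, and any distribution on a tree that is a product of edge factors is Markov.

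Next, for such a $\varphi$, rooting the tree anywhere and applying the chain rule along a breadth-first order gives
\[\mathbb{H}(\varphi)=\mathbb{H}(\varphi(r))+\sum_{\text{arcs }\{p,c\}}\mathbb{H}(\varphi(c)\mid\varphi(p)),\]
where each conditional collapses to the parent alone by the Markov property and Lemma~\ref{lem:entropyDecon}.

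Then rewrite each conditional term as $\mathbb{H}(\varphi(p),\varphi(c))-\mathbb{H}(\varphi(p))$. Counting how often each $\mathbb{H}(\varphi(v))$ is subtracted: a non-root vertex $v$ is subtracted once for each of its children, namely $d(v)-1$ times. The root is subtracted $d(r)$ times and added once, for a net of $-(d(r)-1)$. This yields exactly the claimed formula.

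This direct rooted argument avoids the induction altogether. The only delicate point is justifying the Markov property, which I would verify by factorising the counting measure $\prod_{(u,v)}\mathbb{1}[(f(u),f(v))\in A(\vec{G})]$ across the cut at $v$.
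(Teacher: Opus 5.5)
Your final argument (reduce to components, root each tree, apply the chain rule in an order where parents precede children, collapse each conditional to the parent, rewrite as joint minus marginal entropy and count coefficients) is essentially the paper's proof; the difference is how you justify the parent-only dependence. You use the branch factorisation (Markov property), whereas the paper writes the conditional law directly as proportional to the arc indicator times $\hom_{v_i\mapsto x}(\vec{S}_i,\vec{G})$. The leaf-induction detour is unnecessary, although your more general Markov-field formulation is a useful by-product, since it is exactly what the paper later reuses for the random map $\theta$ in Lemma~\ref{lem:coveringEntropy}.
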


\begin{proof}
Let $\vec{F}_1,\dots,\vec{F}_s$ be the components of $\vec{F}$. The set of homomorphisms from $\vec{F}$ to $\vec{G}$ is the Cartesian product of the sets of homomorphisms from the components $\vec{F}_1,\dots,\vec{F}_s$ to $\vec{G}$. Therefore the restrictions of $\varphi$ to the components of $\vec{F}$ are independent uniformly random homomorphisms. Entropy is additive on independent tuples (by Lemmas~\ref{lem:entropyChain} and~\ref{lem:entropyDecon}), so it suffices to prove the lemma when $\vec{F}$ is connected; i.e. $\vec{F}$ is an oriented tree.

Choose a root $r\in V(\vec{F})$ and order the vertices as $v_1,\dots,v_m$ so that $v_1=r$ and, for each $i\geq 2$, there is a unique index $p(i)\in\{1,\dots,i-1\}$ such that one of the arcs $(v_i,v_{p(i)})$ or $(v_{p(i)},v_i)$ is present in $\vec{F}$; we call $v_{p(i)}$ the \emph{parent} of $v_i$. For each $i\geq 2$, let $\vec{S}_i$ be the subtree of $\vec{F}$ induced by $v_i$ and all of its descendants, and, for each $x\in V(\vec{G})$, let $\hom_{v_i\mapsto x}(\vec{S}_i,\vec{G})$ be the number of homomorphisms from $\vec{S}_i$ to $\vec{G}$ that send $v_i$ to $x$.

Fix $i\geq 2$. Suppose that $x_1,\dots,x_{i-1}\in V(\vec{G})$ such that the event
\[
(\varphi(v_1),\dots,\varphi(v_{i-1}))=(x_1,\dots,x_{i-1})
\]
has positive probability. Since $\vec{F}$ is an oriented tree, for any $x\in V(\vec{G})$,
\[
\mathbb{P}(\varphi(v_i)=x\mid \varphi(v_1)=x_1,\dots,\varphi(v_{i-1})=x_{i-1})
\]
is proportional to the product of the indicator that $x$ and $x_{p(i)}$ form an arc in $\vec{G}$ of the required orientation and the quantity $\hom_{v_i\mapsto x}(\vec{S}_i,\vec{G})$. In particular, the only dependence of $\varphi(v_i)=x$ on the event $(\varphi(v_1),\dots,\varphi(v_{i-1}))=(x_1,\dots,x_{i-1})$ is through $\varphi(v_{p(i)})=x_{p(i)}$. Therefore,
\[
\mathbb{H}(\varphi(v_i)\mid \varphi(v_1),\dots,\varphi(v_{i-1}))
=
\mathbb{H}(\varphi(v_i)\mid \varphi(v_{p(i)})).
\]
By combining the above equality with two applications of Lemma~\ref{lem:entropyChain}, we obtain
\begin{align*}
\mathbb{H}(\varphi)
&=\mathbb{H}(\varphi(v_1),\dots,\varphi(v_m))\\
&=\mathbb{H}(\varphi(v_1))+\sum_{i=2}^m\mathbb{H}(\varphi(v_i)\mid \varphi(v_1),\dots,\varphi(v_{i-1}))\\
&=\mathbb{H}(\varphi(v_1))+\sum_{i=2}^m\mathbb{H}(\varphi(v_i)\mid \varphi(v_{p(i)}))\\
&=\mathbb{H}(\varphi(v_1))+\sum_{i=2}^m\left(\mathbb{H}(\varphi(v_i),\varphi(v_{p(i)}))-\mathbb{H}(\varphi(v_{p(i)}))\right).
\end{align*}
Each edge of the underlying undirected tree of $\vec{F}$ appears exactly once as an arc between a vertex and its parent, and entropy is invariant under permuting the coordinates of a tuple. Hence
\[
\sum_{i=2}^m\mathbb{H}(\varphi(v_i),\varphi(v_{p(i)}))
=
\sum_{(u,v)\in A(\vec{F})}\mathbb{H}(\varphi(u),\varphi(v)).
\]
Moreover, a non-root vertex $v$ is the parent of exactly $d_{\vec{F}}(v)-1$ vertices, while the root is the parent of exactly $d_{\vec{F}}(r)$ vertices. Therefore the coefficient of $\mathbb{H}(\varphi(v))$ in the above expression is $-(d_{\vec{F}}(v)-1)$ for every $v\in V(\vec{F})$, which proves the lemma.
\end{proof}

The next lemma is the main mechanism for establishing the bound \eqref{eq:entropyBound} required for applications of Lemma~\ref{lem:sandwich}. Similar ideas are used in, e.g.,~\cite{BehagueCrudeleNoelSimbaqueba25,BitontiHoganNoelTsarev26+,ChenClemenNoel25+}. 

\begin{lem}
\label{lem:coveringEntropy}
Let $\vec{H}$ and $\vec{F}$ be oriented forests. Suppose that there exists a homomorphism $\phi:\vec{F}\to \vec{H}$ and an integer $c\geq 1$ such that the following hold:
\begin{enumerate}
    \stepcounter{equation}\item \label{eq:Acover} $|\phi^{-1}(u,v)|=c$ for every arc $(u,v)\in A(\vec{H})$,
    \stepcounter{equation}\item \label{eq:Vcover} $|\phi^{-1}(v)|=c$ for every vertex $v\in V(\vec{H})$.
\end{enumerate}
Then every oriented graph $\vec{G}$ satisfies
\[
t(\vec{H},\vec{G})^c\leq t(\vec{F},\vec{G}).
\]
\end{lem}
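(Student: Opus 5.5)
The plan is the standard entropy argument of Kopparty and Rossman. Let $N:=v(\vec{G})$. If $\hom(\vec{H},\vec{G})=0$ the inequality is trivial, so assume otherwise. Let $\varphi$ be a uniformly random homomorphism from $\vec{H}$ to $\vec{G}$, so that $\mathbb{H}(\varphi)=\log_2\hom(\vec{H},\vec{G})$. Using $\varphi$ and $\phi$, I will build a random homomorphism $\rho$ from $\vec{F}$ to $\vec{G}$ whose entropy is at least $c\log_2\hom(\vec{H},\vec{G})-(c\,v(\vec{H})-v(\vec{F}))\log_2 N$. Then Lemma~\ref{lem:entropyUniform} gives $\log_2\hom(\vec{F},\vec{G})\geq \mathbb{H}(\rho)$. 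Note that \eqref{eq:Vcover} gives $v(\vec{F})=c\,v(\vec{H})$, so after normalising by $N^{v(\vec{F})}=N^{c\,v(\vec{H})}$ this yields exactly $t(\vec{F},\vec{G})\geq t(\vec{H},\vec{G})^c$. In fact no loss term should be needed.

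The random homomorphism $\rho$ is built component by component on $\vec{F}$, in a tree-like (Markov) fashion. Root each component of $\vec{F}$. Sample $\rho(r)$ for the root $r$ from the marginal distribution of $\varphi(\phi(r))$. Then, going down the tree, for each child $x$ of $y$ with arc between them mapping to the arc between $\phi(x)$ and $\phi(y)$, sample $\rho(x)$ from the conditional distribution of $\varphi(\phi(x))$ given $\varphi(\phi(y))=\rho(y)$. Different components are sampled independently.

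By induction along the tree, every pair $(\rho(x),\rho(y))$ for an arc $(x,y)\in A(\vec{F})$ has the same distribution as $(\varphi(\phi(x)),\varphi(\phi(y)))$. Likewise every $\rho(x)$ is distributed as $\varphi(\phi(x))$. Since $\phi$ is a homomorphism, the pair $(\rho(x),\rho(y))$ is always an arc of $\vec{G}$, so $\rho$ is a homomorphism from $\vec{F}$ to $\vec{G}$. By the same chain-rule computation as in Lemma~\ref{lem:forestEntropy} (which only used the Markov property along tree edges), we get
\[\mathbb{H}(\rho)=\sum_{(x,y)\in A(\vec{F})}\mathbb{H}(\varphi(\phi(x)),\varphi(\phi(y)))-\sum_{x\in V(\vec{F})}(d_{\vec{F}}(x)-1)\mathbb{H}(\varphi(\phi(x))).\]

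Now regroup using \eqref{eq:Acover} and \eqref{eq:Vcover}. The arc sum is $c\sum_{(u,v)\in A(\vec{H})}\mathbb{H}(\varphi(u),\varphi(v))$. The vertex sum is
\[\sum_{v\in V(\vec{H})}\Bigl(\sum_{x\in\phi^{-1}(v)}d_{\vec{F}}(x)-c\Bigr)\mathbb{H}(\varphi(v)).\]
Here $\sum_{x\in\phi^{-1}(v)}d_{\vec{F}}(x)$ counts the arcs of $\vec{F}$ mapped onto arcs at $v$, which by \eqref{eq:Acover} equals $c\,d_{\vec{H}}(v)$. Hence, by Lemma~\ref{lem:forestEntropy} applied to $\vec{H}$ and $\varphi$,
\[\mathbb{H}(\rho)=c\sum_{(u,v)}\mathbb{H}(\varphi(u),\varphi(v))-c\sum_v(d_{\vec{H}}(v)-1)\mathbb{H}(\varphi(v))=c\,\mathbb{H}(\varphi)=c\log_2\hom(\vec{H},\vec{G}).\]
Therefore $\hom(\vec{F},\vec{G})\geq\hom(\vec{H},\vec{G})^c$, and dividing by $N^{c\,v(\vec{H})}=N^{v(\vec{F})}$ finishes the proof.

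The main obstacle is justifying the entropy formula for $\rho$, since $\rho$ is not uniform, so Lemma~\ref{lem:forestEntropy} does not apply verbatim. I would restate its proof for an arbitrary tree-Markov distribution. The chain rule together with the conditional independence of $\rho(x)$ from earlier vertices given its parent gives the same identity. A second point to check is the degree-counting step: it relies on each arc of $\vec{F}$ at $x$ mapping to a distinct arc of $\vec{H}$ at $\phi(x)$, which holds because $\phi$ maps arcs to arcs.
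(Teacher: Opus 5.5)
Your proposal is correct and follows essentially the same route as the paper: the same tree-Markov random homomorphism built from a uniform $\varphi$ via $\phi$, the same chain-rule entropy formula, and the same regrouping via \eqref{eq:Acover} and \eqref{eq:Vcover}. One small correction to your closing remark: the degree count does not need the arcs at $x$ to map to \emph{distinct} arcs of $\vec{H}$, which can fail when $\phi$ folds. It only needs that each arc of $\vec{F}$ with an endpoint in $\phi^{-1}(v)$ has exactly one such endpoint (since $\vec{H}$ has no loops) and maps to an arc at $v$.
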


\begin{proof}
If $\hom(\vec{H},\vec{G})=0$, then the inequality is trivial. So assume that $\hom(\vec{H},\vec{G})\geq 1$, and let $\varphi$ be a uniformly random homomorphism from $\vec{H}$ to $\vec{G}$.

Our goal is to construct a random homomorphism $\theta:\vec{F}\to \vec{G}$ of entropy equal to $c\cdot\mathbb{H}(\varphi)$. Choose a root in each component of $\vec{F}$. For each root $r$, choose $\theta(r)$ according to the distribution of $\varphi(\phi(r))$, independently of all choices made so far. If $x$ is not a root and has parent $y$, then, having already defined $\theta(y)$, choose $\theta(x)$ according to the conditional distribution of $\varphi(\phi(x))$ given the event that $\varphi(\phi(y))=\theta(y)$, independently of all other previously made choices. Since $\phi$ and $\varphi$ are homomorphisms, every pair in the support of $(\varphi(\phi(y)),\varphi(\phi(x)))$ is an arc of $\vec{G}$ with the same direction as the arc between $y$ and $x$. Thus, $\theta$ is a homomorphism from $\vec{F}$ to $\vec{G}$ with probability one.

By induction on the distance from the roots, it can be easily verified that $\theta(u)$ has the same distribution as $\varphi(\phi(u))$ and $(\theta(u),\theta(v))$ has the same distribution as $(\varphi(\phi(u)),\varphi(\phi(v)))$ for every $u\in V(\vec{F})$ and $(u,v)\in A(\vec{F})$. Moreover, for each non-root vertex $u$, the random variable $\theta(u)$ is conditionally independent of all previously exposed variables other than its parent, given the image of its parent. Consequently, the same chain-rule computation used in the proof of Lemma~\ref{lem:forestEntropy} gives
\[
\mathbb{H}(\theta)=\sum_{(u,v)\in A(\vec{F})}\mathbb{H}(\theta(u),\theta(v))-\sum_{u\in V(\vec{F})}(d_{\vec{F}}(u)-1)\mathbb{H}(\theta(u)).
\]
Substituting the distributions of the random variables $\theta(u)$ and then grouping the resulting terms according to $\phi$, we obtain
\begin{align*}
\mathbb{H}(\theta)
&=\sum_{(u,v)\in A(\vec{F})}\mathbb{H}(\varphi(\phi(u)),\varphi(\phi(v)))
-\sum_{u\in V(\vec{F})}(d_{\vec{F}}(u)-1)\mathbb{H}(\varphi(\phi(u)))\\
&=\sum_{(a,b)\in A(\vec{H})}|\phi^{-1}(a,b)|\,\mathbb{H}(\varphi(a),\varphi(b))
-\sum_{v\in V(\vec{H})}\left(\sum_{u\in\phi^{-1}(v)}(d_{\vec{F}}(u)-1)\right)\mathbb{H}(\varphi(v)).
\end{align*}
By \eqref{eq:Acover}, the first term is equal to
\[
c\sum_{(a,b)\in A(\vec{H})}\mathbb{H}(\varphi(a),\varphi(b)).
\]
For the second term, fix $v\in V(\vec{H})$. Since $\phi$ is a homomorphism, every arc of $\vec{F}$ incident with a vertex in $\phi^{-1}(v)$ is mapped to an arc of $\vec{H}$ incident with $v$, and every arc of $\vec{H}$ incident with $v$ has exactly $c$ preimages. Therefore,
\[
\sum_{u\in\phi^{-1}(v)}d_{\vec{F}}(u)=c\,d_{\vec{H}}(v).
\]
Using \eqref{eq:Vcover}, we deduce that
\[
\sum_{u\in\phi^{-1}(v)}(d_{\vec{F}}(u)-1)=c\,d_{\vec{H}}(v)-c=c(d_{\vec{H}}(v)-1).
\]
Hence
\[
\mathbb{H}(\theta)
=
c\sum_{(a,b)\in A(\vec{H})}\mathbb{H}(\varphi(a),\varphi(b))
-
c\sum_{v\in V(\vec{H})}(d_{\vec{H}}(v)-1)\mathbb{H}(\varphi(v)).
\]
Applying Lemma~\ref{lem:forestEntropy} to $\varphi$, we conclude that
\[
\mathbb{H}(\theta)=c\,\mathbb{H}(\varphi)=c\log_2(\hom(\vec{H},\vec{G})).
\]
Since $\theta$ is supported on homomorphisms from $\vec{F}$ to $\vec{G}$, Lemma~\ref{lem:entropyUniform} implies that
\[
\log_2(\hom(\vec{F},\vec{G}))\geq \mathbb{H}(\theta)=c\log_2(\hom(\vec{H},\vec{G})).
\]
Exponentiating gives
\[
\hom(\vec{F},\vec{G})\geq \hom(\vec{H},\vec{G})^c.
\]
Finally, dividing by $v(\vec{G})^{v(\vec{F})}=v(\vec{G})^{c\,v(\vec{H})}$, where the equality follows from \eqref{eq:Vcover}, yields
\[
t(\vec{F},\vec{G})\geq t(\vec{H},\vec{G})^c.
\]
This completes the proof.
\end{proof}

We are now ready to prove the main lemma of the section.

\begin{proof}[Proof of Lemma~\ref{lem:entropyMain}]
If $a(\vec{H})=0$, then the conclusion is immediate. So assume from now on that $a(\vec{H})\geq 1$.

Let $\mathcal{S}=(\vec{D},\psi,\omega,\pi)$ be an $\vec{H}$-sandwich, and let $\mathcal{C}$ be the set of components of $\vec{D}\setminus V(\vec{H})$. By Lemma~\ref{lem:union}, the disjoint union of two tournament anti-Sidorenko oriented graphs is again tournament anti-Sidorenko.

Next, observe that, given the choice of $\vec{D},\psi$ and $\pi$, the conditions imposed on $\omega$ in Definition~\ref{def:sandwich} form a finite system of linear equalities and inequalities with rational coefficients. Since this system is feasible (because $\omega$ exists), it has a rational feasible point. We may therefore assume that $\omega(C)\in\mathbb{Q}$ for every $C\in\mathcal{C}$. Choose a positive integer $q$ such that $q\omega(C)\in\mathbb{Z}$ for every $C\in\mathcal{C}$.

We first construct an oriented forest $\vec{F}$ as follows. Start with one distinguished copy of $\vec{H}$. Then, for each $C\in\mathcal{C}$, add $q\omega(C)$ disjoint copies of $C$ and attach each of them to the distinguished copy of $\vec{H}$ exactly as $C$ is attached to $\vec{H}$ inside $\vec{D}$. Then, for each $v\in V(\vec{H})$, add a set $L_v$ of
\[
q-\sum_{x\in\psi^{-1}(v)}q\omega(x)
\]
isolated vertices associated with $v$. This quantity is a nonnegative integer by \eqref{eq:coveredv}. Let $\vec{F}$ denote the resulting oriented forest, and let
\[
\phi:\vec{F}\to\vec{H}
\]
be the natural homomorphism that fixes the distinguished copy of $\vec{H}$ pointwise, acts as $\psi$ on every non-isolated added copy, and, for each $v\in V(\vec{H})$, maps every vertex of $L_v$ to $v$.

For every arc $(u,v)\in A(\vec{H})$, we have
\[
|\phi^{-1}(u,v)|=1+\sum_{(x,y)\in\psi^{-1}(u,v)}q\omega(x,y)=q+1,
\]
where the initial $1$ comes from the distinguished copy of $\vec{H}$ and the final equality follows from \eqref{eq:coverede}. By construction, every vertex of $\vec{H}$ also has exactly $q+1$ preimages under $\phi$; the cardinality of the sets $L_v$ was chosen precisely to ensure this. Therefore Lemma~\ref{lem:coveringEntropy} gives
\begin{equation}
\label{eq:entropyMainLower}
t(\vec{H},\vec{T})^{q+1}\leq t(\vec{F},\vec{T})
\end{equation}
for every tournament $\vec{T}$. 

So, by Lemma~\ref{lem:coveringEntropy}, to establish \eqref{eq:entropyBound}, it suffices to prove that
\begin{equation}
    \label{eq:q+1}
q+1=\frac{a(\vec{F})}{a(\vec{H})}.
\end{equation}
For each $C\in\mathcal{C}$, let $a^*(C)$ denote the number of arcs of $\vec{D}$ having at least one endpoint in $C$. Since the isolated vertices added at the final step contribute no arcs, we have
\[
a(\vec{F})=a(\vec{H})+\sum_{C\in\mathcal{C}}q\omega(C)\,a^*(C).
\]
On the other hand, summing \eqref{eq:coverede} over all arcs of $\vec{H}$ shows that
\[
\sum_{C\in\mathcal{C}}\omega(C)\,a^*(C)=a(\vec{H}),
\]
because every arc of $\vec{D}$ with at least one endpoint outside $\vec{H}$ contributes exactly the weight of its component to the left-hand side. Hence
\[
a(\vec{F})=a(\vec{H})+q\,a(\vec{H})=(q+1)a(\vec{H}).
\]
So, \eqref{eq:q+1} holds. Therefore, \eqref{eq:entropyBound} holds. 

It remains to prove that \eqref{eq:strippingOff} holds. That is, we prove that every tournament $\vec{T}$ satisfies
\begin{equation}
\label{eq:entropyMainUpperGoal}
t(\vec{F},\vec{T})\leq (1/2)^{a(\vec{F})-a(\vec{H})}t(\vec{H},\vec{T}).
\end{equation}
Fix a tournament $\vec{T}$, and write $n:=v(\vec{T})$. Let $\vec{F}_0$ be the subgraph of $\vec{F}$ obtained by deleting the isolated vertices in the sets $L_v$ for $v\in V(\vec{H})$. Then
\[
\hom(\vec{F},\vec{T})
=
n^{v(\vec{F})-v(\vec{F}_0)}\hom(\vec{F}_0,\vec{T}).
\]

For a component $C\in\mathcal C$, let $\pi(C)$ denote the component of $\vec{D}\setminus V(\vec{H})$ containing $\pi(x)$ for some (equivalently every) vertex $x\in C$. This is well-defined because $\pi$ is a homomorphism from $\vec{D}\setminus V(\vec{H})$ to itself and is an involution. Moreover, $\pi$ restricts to an isomorphism from $\vec{D}[C]$ to $\vec{D}[\pi(C)]$.

Let $\mathcal{C}^*$ be the set of components of $\vec{D}\setminus V(\vec{H})$ that are incident with an arc having one endpoint in $V(\vec{H})$. For $C\in\mathcal C$, write
\[
r_C:=q\omega(C).
\]
By \eqref{eq:sameWeight}, we have $r_C=r_{\pi(C)}$ for every $C\in\mathcal C$.

We first handle the components attached to $\vec{H}$. The components in $\mathcal{C}^*$ are paired by $\pi$. Indeed, if $C$ is attached to $\vec{H}$, then \eqref{eq:reverseArcs} implies that $\pi(C)$ is also attached to $\vec{H}$. Moreover, no component in $\mathcal{C}^*$ is fixed by $\pi$. To see this, suppose that $C=\pi(C)$ and that $C$ is attached through an arc involving a vertex $x\in C$ and a vertex $h\in V(\vec{H})$. Then \eqref{eq:reverseArcs} gives the reversed attachment arc involving $\pi(x)\in C$ and the same vertex $h$. This contradicts \eqref{eq:singleAttachment}. Thus the components in $\mathcal{C}^*$ split into two-element pairs $\{C,\pi(C)\}$.

For each pair, choose one representative $C$ so that the attachment arc from $\vec{H}$ to the pair has the form
\[
(h_C,x_C),
\]
where $h_C\in V(\vec{H})$ and $x_C\in C$. Then \eqref{eq:reverseArcs} says that $\pi(C)$ is attached through $\pi(x_C)$ by the arc
\[
(\pi(x_C),h_C).
\]
Let $\mathcal R$ be the set of these chosen representatives.

We now define an explicit sequence of graphs. Enumerate the multiset
\[
\{(C,s): C\in\mathcal R,\ 1\leq s\leq r_C\}
\]
as
\[
(C_1,s_1),\dots,(C_N,s_N).
\]
Let $\vec{B}_0$ be the distinguished copy of $\vec{H}$. Having defined $\vec{B}_{i-1}$, define $\vec{B}_i$ by adding to $\vec{B}_{i-1}$ one fresh copy of $\vec{D}[C_i]$ and one fresh copy of $\vec{D}[\pi(C_i)]$, attached to the distinguished copy of $\vec{H}$ by the arcs corresponding to
\[
(h_{C_i},x_{C_i})
\qquad\text{and}\qquad
(\pi(x_{C_i}),h_{C_i}).
\]
Since $\pi$ restricts to an isomorphism from $\vec{D}[C_i]$ to $\vec{D}[\pi(C_i)]$, the graph $\vec{B}_i$ is isomorphic to the graph obtained from $\vec{B}_{i-1}$ by applying the construction in Lemma~\ref{lem:inOut} with $u=h_{C_i}$ and with the rooted graph $(\vec{D}[C_i],x_{C_i})$. Therefore Lemma~\ref{lem:inOut} gives
\[
\hom(\vec{B}_i,\vec{T})
\leq
\frac14\hom(\vec{B}_{i-1},\vec{T})\hom(\vec{D}[C_i],\vec{T})^2
=
\frac14\hom(\vec{B}_{i-1},\vec{T})\hom(\vec{D}[C_i],\vec{T})\hom(\vec{D}[\pi(C_i)],\vec{T}).
\]
Iterating over $i=1,\dots,N$ gives
\[
\hom(\vec{B}_N,\vec{T})
\leq
\hom(\vec{H},\vec{T})
\prod_{C\in\mathcal R}
\left(
\frac14\hom(\vec{D}[C],\vec{T})\hom(\vec{D}[\pi(C)],\vec{T})
\right)^{r_C}.
\]

The graph $\vec{B}_N$ is the part of $\vec{F}_0$ consisting of the distinguished copy of $\vec{H}$ together with all copies of components in $\mathcal{C}^*$. The remaining components of $\vec{F}_0$ are disjoint copies of the unattached components. Hence
\[
\hom(\vec{F}_0,\vec{T})
=
\hom(\vec{B}_N,\vec{T})
\prod_{C\in\mathcal C\setminus\mathcal C^*}\hom(\vec{D}[C],\vec{T})^{r_C}.
\]
Combining this with the previous inequality, and using that the two-element pairs $\{C,\pi(C)\}$ partition $\mathcal{C}^*$, gives
\[
\hom(\vec{F}_0,\vec{T})
\leq
\hom(\vec{H},\vec{T})\,
(1/2)^M
\prod_{C\in\mathcal C}\hom(\vec{D}[C],\vec{T})^{r_C},
\]
where
\[
M:=\sum_{C\in\mathcal{C}^*} r_C.
\]
This number \(M\) is exactly the number of arcs of \(\vec{F}_0\) with one endpoint in the distinguished copy of \(V(\vec{H})\) and the other endpoint outside it.

We now use the partition from \eqref{eq:partition}. For each $1\leq j\leq m$, let
\[
\vec{U}_j:=\vec{D}\left[\bigcup_{C\in\mathcal C_j}C\right],
\]
and let $r_j$ be the common value of $r_C=q\omega(C)$ for $C\in\mathcal C_j$. Since $\vec{U}_j$ is the disjoint union of the components $\vec{D}[C]$ with $C\in\mathcal C_j$, Lemma~\ref{lem:union} gives
\[
\prod_{C\in\mathcal C_j}\hom(\vec{D}[C],\vec{T})
=
\hom(\vec{U}_j,\vec{T}).
\]
Therefore
\[
\prod_{C\in\mathcal C}\hom(\vec{D}[C],\vec{T})^{r_C}
=
\prod_{j=1}^m \hom(\vec{U}_j,\vec{T})^{r_j}.
\]
By \eqref{eq:anti-Sidcomponents}, each $\vec{U}_j$ is tournament anti-Sidorenko. Hence
\[
\hom(\vec{U}_j,\vec{T})
\leq
(1/2)^{a(\vec{U}_j)}n^{v(\vec{U}_j)}.
\]
Substituting this into the previous bound gives
\[
\hom(\vec{F}_0,\vec{T})
\leq
\hom(\vec{H},\vec{T})\,
(1/2)^M
\prod_{j=1}^m
\left((1/2)^{a(\vec{U}_j)}n^{v(\vec{U}_j)}\right)^{r_j}.
\]
Since the vertices in $\vec{F}\setminus V(\vec{F}_0)$ are isolated, we get
\[
\hom(\vec{F},\vec{T})
\leq
n^{v(\vec{F})-v(\vec{F}_0)}
\hom(\vec{H},\vec{T})\,
(1/2)^M
\prod_{j=1}^m
\left((1/2)^{a(\vec{U}_j)}n^{v(\vec{U}_j)}\right)^{r_j}.
\]

Finally, observe that
\[
v(\vec{F})-v(\vec{H})
=
v(\vec{F})-v(\vec{F}_0)+\sum_{j=1}^m r_jv(\vec{U}_j),
\]
and
\[
a(\vec{F})-a(\vec{H})
=
M+\sum_{j=1}^m r_ja(\vec{U}_j).
\]
The first identity follows because $\vec{F}_0$ consists of the distinguished copy of $\vec{H}$ together with the non-isolated added components, while $\vec{F}\setminus V(\vec{F}_0)$ consists exactly of the isolated vertices added in the sets $L_v$. The second identity follows because the arcs of $\vec{F}$ outside the distinguished copy of $\vec{H}$ are exactly the internal arcs of the added components together with the \(M\) attachment arcs.

Thus
\[
\hom(\vec{F},\vec{T})
\leq
(1/2)^{a(\vec{F})-a(\vec{H})}
n^{v(\vec{F})-v(\vec{H})}
\hom(\vec{H},\vec{T}),
\]
which is equivalent to \eqref{eq:entropyMainUpperGoal}.

Since $a(\vec{F})=(q+1)a(\vec{H})>a(\vec{H})$, the lower bound \eqref{eq:entropyMainLower} and the upper bound \eqref{eq:entropyMainUpperGoal} together show that $\vec{F}$ satisfies the hypotheses of Lemma~\ref{lem:sandwich}. Therefore Lemma~\ref{lem:sandwich} implies that $\vec{H}$ is tournament anti-Sidorenko.
\end{proof}

\section{Paths With Restricted Block Lengths}
\label{sec:0mod4}

Our goal in this section is to prove Theorem~\ref{th:0mod4} by exhibiting a sandwich for every oriented path $\vec{P}$ in which every block has length at least two and every internal block has length divisible by four. Our strategy is to find $\vec{P}_k$-sandwiches for each $k\in\{2,3,4,5\}$ with additional properties which allow us to ``glue'' them together to yield a sandwich for any such $\vec{P}$. We start by presenting a special $\vec{P}_4$-sandwich that handles the internal blocks of $\vec{P}$.

\begin{constr}
\label{constr:D4*}
There exists a $\vec{P}_4$-sandwich $\mathcal S_4^*=(\vec{D}_4^*,\psi_4^*,\omega_4^*,\pi_4^*)$ such that $\cov_{\mathcal S_4^*}(v_0)=\cov_{\mathcal S_4^*}(v_4)=\frac{1}{2}$.
\end{constr}

\begin{proof}
The oriented tree $\vec{D}_4^*$ is shown in Figure~\ref{fig:D4*} in Appendix~\ref{app:gadget-atlas}. It is obtained from $\vec{P}_4$ by adding
\begin{itemize}
    \item eight vertices $u_0^+,u_1^0,u_2^-,u_3^2,u_1^2,u_2^+,u_3^4$ and $u_4^-$, and
    \item eight arcs $(u_0^+,v_1),(u_0^+,u_1^0),(v_1,u_2^-),(u_2^-,u_3^2),(u_1^2,u_2^+),(u_2^+,v_3),(v_3,u_4^-)$ and $(u_3^4,u_4^-)$. 
\end{itemize}
The homomorphism $\psi_4^*$ is obtained by mapping each vertex of $\vec{D}_4^*$ to the vertex of $\vec{P}_4$ with the same subscript. The weight function $\omega_4^*$ assigns each component of $\vec{D}_4^*\setminus V(\vec{P}_4)$ to weight $1/2$. Finally, $\pi_4^*$ is the involution on $V(\vec{D}_4^*)\setminus V(\vec{P}_4)$ defined by $\pi_4^*(u_0^+)=u_2^-, \pi_4^*(u_1^0)=u_3^2,\pi_4^*(u_1^2)=u_3^4$ and $\pi_4^*(u_2^+)=u_4^-$. It is easily checked that $(\vec{D}_4^*,\psi_4^*,\omega_4^*,\pi_4^*)$ is a $\vec{P}_4$-sandwich and that $\cov_{\mathcal S_4^*}(v_0)=\cov_{\mathcal S_4^*}(v_4)=\frac{1}{2}$. 
\end{proof}

Next, we present $\vec{P}_k$-sandwiches for $k\in\{2,3,5\}$. These are used for handling the extreme ends of the path in the proof of Theorem~\ref{th:0mod4}. Unlike the case $k=4$, for these cases, we are only able to get the weight mapped to one (not two) of the endpoints to be at most $1/2$. 

\begin{constr}
\label{constr:Dk*}
For each $k\in\{2,3,5\}$, there exists a $\vec{P}_k$-sandwich $\mathcal S_k^*=(\vec{D}_k^*,\psi_k^*,\omega_k^*,\pi_k^*)$ 
such that $\cov_{\mathcal S_k^*}(v_0)=\frac{1}{2}$.
\end{constr}

\begin{proof}
First, consider $k=2$. Let $\vec{D}_2^*$ be the oriented forest depicted in Figure~\ref{fig:D2*} in Appendix~\ref{app:gadget-atlas}. It is obtained from $\vec{P}_2$ by 
\begin{itemize}
    \item adding five vertices $u_0,u_1,u_1',u_2,u_2'$
    \item adding four arcs $(u_0,u_1),(u_0,u_1'),(u_1,u_2),(u_1',u_2')$. 
\end{itemize}
The homomorphism $\psi_2^*$ is obtained by mapping each vertex of $\vec{D}_2^*$ to the vertex of $\vec{P}_2$ with the same subscript. The weight function $\omega_2^*$ assigns the unique component of $\vec{D}_2^*\setminus V(\vec{P}_2)$
to weight $1/2$. Finally, $\pi_2^*$ is the identity function on $V(\vec{D}_2^*)\setminus V(\vec{P}_2)$. All of the properties of Definition~\ref{def:sandwich} are easily shown to hold for $(\vec{D}_2^*,\psi_2^*,\omega_2^*,\pi_2^*)$, where \eqref{eq:anti-Sidcomponents} holds by Proposition~\ref{prop:P22}. Clearly, this construction satisfies $\cov_{\mathcal S_2^*}(v_0)=\frac{1}{2}$.

Next, consider $k=3$. Let $\vec{D}_3^*$ be the oriented forest depicted in Figure~\ref{fig:D3*} in Appendix~\ref{app:gadget-atlas}. It is obtained from $\vec{P}_3$ by 
\begin{itemize}
    \item adding ten vertices $u_0^+,u_1^0,u_2^-,u_3^2,u_1^+,u_3^-,w_1,w_2,w_2'$ and $w_3'$, and
    \item adding eight arcs $(u_0^+,v_1),(u_0^+,u_1^0),(v_1,u_2^-),(u_2^-,u_3^2),(u_1^+,v_2),(v_2,u_3^-),(w_1,w_2)$ and $(w_2',w_3')$. 
\end{itemize}
The homomorphism $\psi_3^*$ is obtained by mapping each vertex of $\vec{D}_3^*$ to the vertex of $\vec{P}_3$ with the same subscript. The weight function $\omega_3^*$ assigns weight $1/2$ to the components $\{u_0^+,u_1^0\}$ and $\{u_2^-,u_3^2\}$ of $\vec{D}_3^*\setminus V(\vec{P}_3)$ and weight $1/4$ to all other components. Finally, $\pi_3^*$ is the involution on $V(\vec{D}_3^*)\setminus V(\vec{P}_3)$ which fixes $w_1,w_2,w_2'$ and $w_3'$ and satisfies $\pi_3^*(u_0^+)=u_2^-, \pi_3^*(u_1^0)=u_3^2,\pi_3^*(u_1^+)=u_3^-$. All of the properties of Definition~\ref{def:sandwich} are easily shown to hold for $(\vec{D}_3^*,\psi_3^*,\omega_3^*,\pi_3^*)$. Clearly, this construction satisfies $\cov_{\mathcal S_3^*}(v_0)=\frac{1}{2}$.

Finally, suppose that $k=5$.  Let $\vec{D}_5^*$ be the oriented forest depicted in Figure~\ref{fig:D5*} in Appendix~\ref{app:gadget-atlas}. This construction is rather involved and so we will not describe all of the vertices and arcs added and simply refer to the picture.

The homomorphism $\psi_5^*$ is obtained by mapping each vertex of $\vec{D}_5^*$ to the vertex of $\vec{P}_5$ with the same subscript. The weight function $\omega_5^*$ assigns weight $1/4$ to the components $\{u_0^+,u_1^0\},\{u_2^-,u_3^2\},\{u_1^2,u_2^+\}$ and $\{u_3^4,u_4^-\}$ of $\vec{D}_5^*\setminus V(\vec{P}_5)$ and weight $1/8$ to all other components. Let $\pi_5^*$ be the involution on $V(\vec{D}_5^*)\setminus V(\vec{P}_5)$ which fixes all twelve of the $w$ vertices and satisfies
\[(\pi_5^*(u_0^+),\pi_5^*(u_1^0),\pi_5^*(u_2^+),\pi_5^*(u_1^2)) = (u_2^-,u_3^2,u_4^-,u_3^4)\]
and
\[(\pi_5^*(z_1^+),\pi_5^*(z_2^1),\pi_5^*(z_3^1),\pi_5^*(z_0^+),\pi_5^*(z_1^0),\pi_5^*(z_2^0)) = (z_3^-,z_4^3,z_5^3,z_2^-,z_3^2,z_4^2).\]
Clearly, this construction satisfies $\cov_{\mathcal S_5^*}(v_0)=\frac{1}{2}$. All of the properties of Definition~\ref{def:sandwich}, apart from \eqref{eq:anti-Sidcomponents}, are easily shown to hold for $(\vec{D}_5^*,\psi_5^*,\omega_5^*,\pi_5^*)$. To show that \eqref{eq:anti-Sidcomponents} holds, we observe that $\vec{D}_5^*\setminus V(\vec{P}_5)$ has four 1-vertex components, four 2-vertex components and eight 3-vertex components, four of which are isomorphic to $\vec{P}_2$ and four of which are isomorphic to $\cev{P}_{1,1}$. Partition the components so that each component with one or two vertices is in its own partition class and there are four partition classes consisting of two 3-vertex components each, one of each type. The fact that \eqref{eq:anti-Sidcomponents} holds now follows from Proposition~\ref{prop:forest} and Lemma~\ref{lem:reverseAllArcs}. 
\end{proof}

We now prove Theorem~\ref{th:0mod4}.

\begin{proof}[Proof of Theorem~\ref{th:0mod4}]
Let $\vec{P}$ be an oriented path in which every block has length at least two and every internal block has length divisible by four. We can therefore divide $\vec{P}$ into directed subpaths in which the first and last subpath have length $2,3,4$ or $5$ and all other subpaths have length $4$. 

We use Constructions~\ref{constr:D4*} and~\ref{constr:Dk*} to build a $\vec{P}$-sandwich. 
Let the first and last subpaths have lengths $k$ and $\ell$, respectively. Thus $k,\ell\in\{2,3,4,5\}$. For the first subpath, we use a copy of \(\vec D_k^*\) with the order
of the path labels reversed, so that the low-cover endpoint lies at the
gluing vertex.  For the last subpath, we use a copy of \(\vec D_\ell^*\)
with its usual labeling. For each subpath, if its orientation disagrees with the orientation of the corresponding subpath of $\vec{P}$, then reverse all of its arcs. We then glue together these oriented forests by identifying the vertex $v_k$ from the first subpath with the vertex $v_0$ in the second, and so on.

The functions $\psi,\omega$ and $\pi$ are inherited from the constituent sandwiches in the natural way. Since each constituent is a sandwich, every arc of $\vec{P}$ is covered with total weight $1$. At every glued vertex, the two contributions to $\cov$ are at most $1/2$ each, by Constructions~\ref{constr:D4*} and~\ref{constr:Dk*}. All other vertices of the path are only covered by vertices within the unique sandwich that contains it. Thus the vertex-covering inequality \eqref{eq:coveredv} in Definition~\ref{def:sandwich} holds, and all remaining properties are inherited from the constituent sandwiches. Hence $(\vec{D},\psi,\omega,\pi)$ is a $\vec{P}$-sandwich, and the theorem follows from Lemma~\ref{lem:entropyMain}.
\end{proof}

By combining Theorem~\ref{th:0mod4} with the results of the previous section, we can now complete the proof of Theorem~\ref{th:2blocks}.

\begin{proof}[Proof of Theorem~\ref{th:2blocks}]
Consider the path $\vec{P}_{a,b}$ where $a+b\geq3$. If $a\geq2$ and $b\geq2$, then $\vec{P}_{a,b}$ is tournament anti-Sidorenko by Theorem~\ref{th:0mod4}. 

So, we assume that $a=1$ or $b=1$. By reflecting the path if necessary and then applying
Lemma~\ref{lem:reverseAllArcs}, we may assume that \(a=1\) and \(b\geq2\). If $b\in\{2,3,4\}$, then we are done by Proposition~\ref{prop:P12},~\ref{prop:P13} or~\ref{prop:P14}, respectively. If $b\geq5$, then we are done by Lemma~\ref{lem:reduction} and the fact that we have already settled the cases in which $a,b\geq2$ in the previous paragraph. Thus, the proof is complete. 
\end{proof}

\section{3-Spiders}
\label{sec:spiders}

In this section, we prove that every $3$-spider admits a tournament anti-Sidorenko orientation. The key input is Lemma~\ref{lem:cover1/2} below, which supplies sandwiches for paths in which one prescribed internal vertex is covered by vertices of weight at most $1/2$. We first use this lemma to prove Theorem~\ref{th:3spider}. After that, the remainder of the section is devoted to the certificate constructions needed for Lemma~\ref{lem:cover1/2}. The proof of the lemma is a finite congruence-class analysis; each case is handled by one of the displayed certificates together with the four-block extension lemma below.

\begin{lem}
\label{lem:cover1/2}
Let $k\geq4$ and $2\leq a\leq k-2$. Suppose that one of the following holds: 
\begin{enumerate}
\stepcounter{equation}\item \label{eq:00}$k\equiv 0\bmod 4$ and $a\equiv 0\bmod 4$.
\stepcounter{equation}\item\label{eq:02} $k\equiv 0\bmod 4$ and $a\equiv 2\bmod 4$.
\stepcounter{equation}\item\label{eq:21} $k\equiv 2\bmod 4$ and $a\equiv 1\bmod 4$.
\stepcounter{equation}\item\label{eq:23} $k\equiv 2\bmod 4$ and $a\equiv 3\bmod 4$.
\stepcounter{equation}\item\label{eq:30} $k\equiv 3\bmod 4$ and $a\equiv 0\bmod 4$.
\stepcounter{equation}\item\label{eq:31} $k\equiv 3\bmod 4$ and $a\equiv 1\bmod 4$.
\end{enumerate}
Then there exists $0\leq \ell\leq k-1$ and a $\vec{P}_{k-\ell,\ell}$-sandwich $\mathcal S_k^a=(\vec{D}_{k}^a,\psi_{k}^a,\omega_{k}^a,\pi_{k}^a)$ such that
\begin{equation}\label{eq:covera}\cov_{\mathcal S_k^a}(v_a)\leq\frac{1}{2}.\end{equation}
\end{lem}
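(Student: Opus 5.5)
The proof will be by gluing sandwiches, in the same spirit as the proof of Theorem~\ref{th:0mod4}. The basic gluing principle is the one used there. Suppose $\mathcal S'$ is a sandwich for a path $\vec{P}'$ with $\cov_{\mathcal S'}(\text{right end})\le 1/2$, and $\mathcal S''$ is a sandwich for $\vec{P}''$ with $\cov_{\mathcal S''}(\text{left end})\le 1/2$. Identifying these two endpoints gives a sandwich for the concatenated path. Here the orientation of each piece is chosen freely, using Lemma~\ref{lem:reverseAllArcs} to reverse all arcs of a piece when needed. Every vertex not at a glue point keeps the cover it had in its own piece.

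\textbf{Reduction to small cases.} I would first prove a ``four-block extension'' statement, which I expect is the lemma mentioned just before the statement. If $\mathcal S$ is a $\vec{P}_{k-\ell,\ell}$-sandwich with $\cov_{\mathcal S}(v_a)\le 1/2$, then we can append a directed $\vec{P}_4$-block at either end. We do this using $\mathcal S_4^*$ from Construction~\ref{constr:D4*}, whose two endpoints both have cover $1/2$. This yields a sandwich for a path with $k+4$ arcs in which $v_a$ (or $v_{a+4}$, if we append on the left) still has cover at most $1/2$. The catch is that gluing at an endpoint requires the endpoint cover of the existing sandwich to be at most $1/2$. So the base certificates should be built with $\cov(v_0)\le1/2$ and $\cov(v_k)\le1/2$ whenever possible, or at least at the end where we extend.

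The appended block must continue the last block in the same direction, so that the path keeps two blocks, i.e. it stays of the form $\vec{P}_{k-\ell,\ell}$. One could also allow it to start a new block; since the lemma only asserts the existence of some $\ell$, the simplest approach is to extend the final block. Gluing $\mathcal S_4^*$ at $v_k$ then turns $\vec{P}_{k-\ell,\ell}$ into $\vec{P}_{k-\ell,\ell+4}$ and keeps $a$ fixed. Gluing on the left turns it into $\vec{P}_{k-\ell+4,\ell}$ and shifts $a$ to $a+4$.

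With these two moves, each of the six congruence classes \eqref{eq:00}--\eqref{eq:31} reduces to finitely many base pairs $(k,a)$. These are the smallest admissible $k$ in each class, namely $k\in\{4,6,7,8\}$ roughly, together with each residue of $a$ with $2\le a\le k-2$ that is not reachable from a smaller instance.

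\textbf{Base cases.} For each base pair, I would exhibit an explicit $\vec{P}_{k-\ell,\ell}$-sandwich, choosing $\ell$ so that $v_a$ is the turning vertex or adjacent to it. Where possible I would assemble it by gluing: a reversed $\mathcal S_j^*$ from Construction~\ref{constr:Dk*} for the first block, and an $\mathcal S_{j'}^*$ for the second, so that the glue vertex, which is the source or sink, has cover $\le1/2+1/2$. That glue vertex itself only gets a bound of $1$, however. If $v_a$ must be the glue vertex, we need a certificate in which one endpoint has cover $0$ (or $\le1/4$) on one side, which requires new gadgets found by linear programming, as described in the remark on automation. For example, when $a=k-\ell$ exactly, $v_a$ is the sink, and the covering components there must be chosen carefully. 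Otherwise I would place $v_a$ at an internal vertex of a $\mathcal S_4^*$ piece whose internal covers are $1/2$ by construction. In $\mathcal S_4^*$, the vertices $v_1,v_2,v_3$ each appear to have cover exactly $1/2$ from the two weight-$1/2$ components in their column. This makes any $a$ congruent to an internal position of a $\vec{P}_4$-piece, with admissible end pieces, immediate. The genuinely new certificates are needed for the remaining residues, which explains the uneven list of cases.

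\textbf{Main obstacle.} The hard step is the base certificates in which $v_a$ coincides with, or lies next to, the direction change, and in which the residues of $k$ and $a$ prevent decomposing the path into pieces of lengths in $\{2,3,5\}$ and $4$ with $v_a$ internal. Here I would set up the linear program from the earlier remark: its variables are the weights of attached anti-Sidorenko components, and its constraints are the arc covers equal to $1$, the vertex covers at most $1$, the single constraint $\cov(v_a)\le1/2$, the $\pi$-pairing conditions, and endpoint covers at most $1/2$ so that the extension works. I would then read off rational solutions and verify the anti-Sidorenko property of each component class, using Propositions~\ref{prop:P12}, \ref{prop:P22} and~\ref{prop:forest}, and Lemma~\ref{lem:inOut}.
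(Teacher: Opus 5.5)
Your reduction step has a genuine gap, and it starts with a miscount. In $\mathcal S_4^*$ (Construction~\ref{constr:D4*}), columns $1,2,3$ contain $u_1^0,u_1^2$, then $u_2^-,u_2^+$, then $u_3^2,u_3^4$, each of weight $1/2$. So $\cov(v_1)=\cov(v_2)=\cov(v_3)=1$, not $1/2$.

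This is not a fixable detail. Every component $C$ of $\vec D\setminus V(\vec P)$ is a tree with at most one attachment arc. So, exactly as in the proof of Lemma~\ref{lem:entropyMain}, every sandwich $\mathcal S$ of a path with $k$ arcs satisfies
\[
\sum_{v}\cov_{\mathcal S}(v)=\sum_{e}\cov_{\mathcal S}(e)+\sum_{C\ \text{unattached}}\omega(C)\geq k,
\qquad\text{i.e.}\qquad \sum_{v}\bigl(1-\cov_{\mathcal S}(v)\bigr)\leq 1 .
\]
Hence a sandwich with $\cov(v_a)\le 1/2$ can have at most one endpoint of cover $\le 1/2$. Each base certificate therefore supports only one of your two moves. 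It yields only a one-parameter family, either $\{(k_0+4t,a_0)\}$ or $\{(k_0+4t,a_0+4t)\}$. Finitely many such families cannot contain, for example, all pairs $(k,a)=(8N,4N)$ required in case~\eqref{eq:00}, since each family contains at most one of them.

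For the same reason, no case is ``immediate'' from placing $v_a$ inside an $\mathcal S_4^*$ piece. Likewise, a piece containing $v_a$ cannot be glued on both sides at endpoints of cover $1/2$, since that would need slack $3/2$. An LP search cannot get around this, because the obstruction is the counting identity itself.

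The missing idea is to make the base certificates \emph{partial} sandwiches whose ends carry a deficit on the last arc as well as on the last two vertices: $\cov(v_{k-1}),\cov(v_k)\le 3/4$ and $\cov(e_{k-1})=3/4$. The extension works as follows.
\begin{enumerate}[(i)]
\item Lemma~\ref{lem:fourBlockExtensionRight} glues at $v_k$ the partial $\vec P_{4t}$-sandwich $\mathcal S_{4t}^\ddag$ of Construction~\ref{constr:extended}. Its start has cover $1/4$ at $v_0$, so the glue vertex reaches exactly $1$, and cover $3/4$ at $v_1$ and on $e_0$.
\item The two arc deficits on either side of $v_k$ are repaired by two $\pi$-paired one-vertex components $p_{k-1},p_{k+1}$, attached via $(p_{k-1},v_k)$ and $(v_k,p_{k+1})$. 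These raise $v_{k-1}$ and $v_{k+1}$ but not $v_k$.
\item The same $(3/4,3/4,3/4)$ pattern reappears at the far end and is closed off by an unattached arc of weight $1/4$.
\end{enumerate}
So each extended end costs only $1/4$ of unattached weight, instead of $1/2$ of vertex slack. The budget $1/2+1/4+1/4=1$ then allows extending at both ends while $v_a$ keeps cover $1/2$ (Lemma~\ref{lem:fourBlockExtensionBoth}).

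This is what reduces the cases to explicit base certificates such as those of Constructions~\ref{constr:0mod4 0mod4}, \ref{constr:2mod4 1mod4} and~\ref{constr:3mod4 1mod4}. Even then, case~\eqref{eq:02} needs the explicit infinite family of Construction~\ref{constr:0mod4 2mod4 general} together with a one-sided extension. Finally, you defer the base cases to an unspecified LP search, but exhibiting and verifying these certificates is the actual content of the lemma.
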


\begin{proof}[Proof of Theorem~\ref{th:3spider}]
Consider the $(a,b,c)$-spider where $a,b,c\geq1$. Note that, if any of $a,b$ or $c$ is equal to $1$, then the spider is a caterpillar, and the result follows from~\cite[Theorem~1.4]{HeManiNieTungWei25+}. So, we may assume that $a,b,c\geq2$. 

By the pigeonhole principle, at least two of $a,b,c$ must be congruent to $0$ or $3$ modulo $4$, or at least two of $a,b,c$ must be congruent to $1$ or $2$ modulo $4$. By symmetry, we assume that either $a$ and $b$ are both congruent to $0$ or $3$ or that both of $a$ and $b$ are congruent to $1$ or $2$. Define $k=a+b$. After possibly swapping $a$ and $b$, for each of the six possible pairs of congruence classes of $a$ and $b$, the congruence classes for $k$ and $a$ fall into one of the cases listed in Lemma~\ref{lem:cover1/2}: namely $(a,b)=(0,0)$ gives \eqref{eq:00}, $(0,3)$ gives \eqref{eq:30}, $(3,3)$ gives \eqref{eq:23}, $(1,1)$ gives \eqref{eq:21}, $(1,2)$ gives \eqref{eq:31}, and $(2,2)$ gives \eqref{eq:02}. Thus Lemma~\ref{lem:cover1/2} gives a $\vec{P}_{k-\ell,\ell}$-sandwich $(\vec{D}_k^a,\psi_k^a,\omega_k^a,\pi_k^a)$ satisfying \eqref{eq:covera} for some $0\leq \ell\leq k-1$. Since \(c\geq2\), we obtain a $\vec{P}_c$-sandwich $\mathcal S_c=(\vec{D}_c,\psi_c,\omega_c,\pi_c)$ with the property that $\cov_{\mathcal S_c}(v_0)\leq\frac{1}{2}$ by using one of
Construction~\ref{constr:Dk*}, for \(k\in\{2,3,5\}\), or
Construction~\ref{constr:D4*}, according to the residue of \(c\bmod4\),
and then gluing on copies of Construction~\ref{constr:D4*}. Thus, by taking the disjoint union of $\vec{D}_k^a$ and $\vec{D}_c$ and then identifying vertex $v_a$ from $\vec{D}_k^a$ with $v_0$ from $\vec{D}_c$, we get a sandwich for an orientation of the $(a,b,c)$-spider. Thus, the proof is complete by Lemma~\ref{lem:entropyMain}. 
\end{proof}

From here forward, we focus on the proof of Lemma~\ref{lem:cover1/2}. 

\subsection{Two ``Repeating Block'' Gadgets}

We now turn our attention to proving Lemma~\ref{lem:cover1/2}. We start by presenting a  gadget that is useful for extending existing constructions. 

\begin{constr}
\label{constr:extension}
There exists a partial $\vec{P}_4$-sandwich $\mathcal S_4^\dag=(\vec{D}_4^\dag,\psi_4^\dag,\omega_4^\dag,\pi_4^\dag)$ such that
\[
\cov_{\mathcal S_4^\dag}(v_{i})=
\begin{cases}
1/4&\text{if }i=0,\\
3/4&\text{if }i\in\{1,3,4\},\\
1&\text{if }i=2,
\end{cases}
\]
and
\[
\cov_{\mathcal S_4^\dag}(e_i)=
\begin{cases}
3/4&\text{if }i\in \{0,3\},\\
1&\text{otherwise}.
\end{cases}
\]
\end{constr}

\begin{proof}
The oriented graph $\vec{D}_4^\dag$ is shown in Figure~\ref{fig:extension} in Appendix~\ref{app:gadget-atlas}. Each component of $\vec{D}_4^\dag\setminus V(\vec{P}_4)$ is assigned a weight of $1/4$ by $\omega_4^\dag$. Summing the component weights gives the stated values of $\cov_{\mathcal S_4^\dag}$, and the verification of \eqref{eq:anti-Sidcomponents} uses Proposition~\ref{prop:P22}. 
\end{proof}

The key property of the construction in the previous proposition is that multiple copies of it can be ``chained together'' to get a partial sandwich of a longer path. The next proposition makes this more precise. 

\begin{constr}
\label{constr:extended}
For each $t\geq1$, there exists a partial $\vec{P}_{4t}$-sandwich
\[
\mathcal S_{4t}^\ddag=(\vec{D}_{4t}^\ddag,\psi_{4t}^\ddag,\omega_{4t}^\ddag,\pi_{4t}^\ddag)
\]
such that
\[
\cov_{\mathcal S_{4t}^\ddag}(v_{i})=
\begin{cases}
1/4&\text{if }i=0,\\
3/4&\text{if }i\in\{1,4t-1,4t\},\\
1&\text{otherwise},
\end{cases}
\]
and
\[
\cov_{\mathcal S_{4t}^\ddag}(e_i)=
\begin{cases}
3/4&\text{if }i\in\{0,4t-1\},\\
1&\text{otherwise}.
\end{cases}
\]
\end{constr}

\begin{proof}
The construction of $\vec{D}_{4t}^\ddag$ is as follows. Take $t$ disjoint copies of the oriented forest $\vec{D}_4^\dag$ from Construction~\ref{constr:extension} and identify the vertex corresponding to $v_4$ in each of the first $t-1$ copies with the vertex corresponding to $v_0$ in the next copy. Note that the copies of $\vec{P}_4$ in the copies of $\vec{D}_{4t}^\ddag$ now form a copy of $\vec{P}_{4t}$; relabel these vertices $v_0,\dots,v_{4t}$ in the order that they come on the path. Next, for each $1\leq j\leq t-1$, add two vertices $y_{4j-1}^+$ and $y_{4j+1}^-$ and arcs $(y_{4j-1}^+,v_{4j})$ and $(v_{4j},y_{4j+1}^-)$. See Figure~\ref{fig:extended} in Appendix~\ref{app:gadget-atlas} for a depiction of this in the case $t=3$. All components of $\vec{D}_{4t}^\ddag\setminus V(\vec{P}_{4t})$ are assigned weight $1/4$. The functions $\psi_{4t}^\ddag,\omega_{4t}^\ddag$ and $\pi_{4t}^\ddag$ are inherited from $\psi_4^\dag,\omega_4^\dag$ and $\pi_4^\dag$ in the natural way, where \(\pi_{4t}^\ddag(y_{4j-1}^+)=y_{4j+1}^-\) for all $1\leq j\leq t-1$. 
\end{proof}

The preceding construction will be used to extend existing constructions via the following lemmas.  Throughout the rest of this section, if an oriented path has vertices $v_0,\dots,v_k$, then $e_i$ denotes the arc of the path with endpoints $v_i$ and $v_{i+1}$, with its actual orientation.

\begin{lem}
\label{lem:fourBlockExtensionRight}
Let $\vec{Q}$ be an oriented path with vertices $v_0,\dots,v_k$, and let $\mathcal S=(\vec{D},\psi,\omega,\pi)$ be a partial $\vec{Q}$-sandwich. If
\[
\cov_{\mathcal S}(v_{k-1})\leq 3/4, \quad \cov_{\mathcal S}(v_{k})\leq 3/4, \quad\cov_{\mathcal S}(e_{k-1})=3/4,
\]
then, for all $t\geq0$, there exists a partial sandwich $\mathcal S_{+4t}=(\vec{D}_{+4t},\psi_{+4t},\omega_{+4t},\pi_{+4t})$ for the path obtained from $\vec{Q}$ by extending the right endpoint by a directed block of length $4t$ such that
\[\cov_{\mathcal S_{+4t}}(v_{i})=\cov_{\mathcal S}(v_{i})\text{ for all }0\leq i\leq k-2,\]
\[\cov_{\mathcal S_{+4t}}(e_i)=\cov_{\mathcal S}(e_i)\text{ for all }0\leq i\leq k-2,\]
\[\cov_{\mathcal S_{+4t}}(v_{k-1})=\cov_{\mathcal S}(v_{k-1})+1/4,\quad \cov_{\mathcal S_{+4t}}(v_{k})=\cov_{\mathcal S}(v_{k})+1/4,\quad \cov_{\mathcal S_{+4t}}(e_{k-1})=1,\]
\[\cov_{\mathcal S_{+4t}}(v_{i})=1\text{ for all }k+1\leq i\leq k+4t,\]
and
\[\cov_{\mathcal S_{+4t}}(e_i)=1\text{ for all }k\leq i\leq k+4t-1.\]
\end{lem}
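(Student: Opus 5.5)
The case $t=0$ is trivial: take $\mathcal S_{+0}$ to be $\mathcal S$ with one modification. The stated identities for $t=0$ require $\cov(v_{k-1})$, $\cov(v_k)$ and $\cov(e_{k-1})$ to increase by $1/4$, so even here something must be added. I would therefore handle all $t\geq0$ uniformly by gluing the reflected copy of a partial sandwich from Construction~\ref{constr:extended} (for $t\geq1$), or a small ad hoc gadget (for $t=0$), onto the right end of $\vec{Q}$.

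For $t\geq1$ the natural plan is as follows. Take $\mathcal S_{4t}^\ddag$ from Construction~\ref{constr:extended} and place its path on the vertices $v_{k-1},v_k,\dots,v_{k+4t-1}$, so that its low-cover end (weight $1/4$) sits at $v_{k-1}$ and its vertex $v_1$ (weight $3/4$) sits at $v_k$. Reverse arcs where necessary using Lemma~\ref{lem:reverseAllArcs}; this is harmless since reversing all arcs of a component preserves the anti-Sidorenko property. This overlap does not match the lengths, so instead I would align it so that the extension block $v_k,\dots,v_{k+4t}$ carries a copy of $\vec{D}_{4t}^\ddag$ shifted by one, with the edge $e_{k-1}$ receiving the $1/4$ deficit of $e_0$ of the gadget. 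Concretely, I would build a modified chain of $t$ copies of $\vec{D}_4^\dag$ together with one extra pair of pendant arcs $y^+\to v_{k-1}$, $v_k\to y^-$ style components (as in Construction~\ref{constr:extended}) of weight $1/4$, placed to raise $e_{k-1}$, $v_{k-1}$ and $v_k$ each by exactly $1/4$. The chained gadget then fills every arc $e_i$ with $i\ge k$ and every vertex $v_i$ with $i\ge k+1$ to cover exactly $1$. The tail end of the gadget has deficits $3/4$ at $v_{4t-1},v_{4t}$ and $e_{4t-1}$; these have to be topped up by an additional weight-$1/4$ pendant pair (or by a reversed copy of the first block), using the same trick as the $y$-vertices.

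Next I would verify the sandwich axioms for the glued object. Axioms \eqref{eq:sandwichForest}, \eqref{eq:sandwichMap}, \eqref{eq:singleAttachment}, \eqref{eq:involution}, \eqref{eq:sameWeight} and \eqref{eq:reverseArcs} are local and are inherited componentwise, since the new components attach only to path vertices and $\pi$ is defined as the disjoint union of the involutions. The partition \eqref{eq:partition}, \eqref{eq:anti-Sidcomponents} is the union of the partitions. The cover sums are additive over the pieces, so the coverage identities reduce to bookkeeping against the tables in Constructions~\ref{constr:extension} and~\ref{constr:extended}. The hypotheses $\cov_{\mathcal S}(v_{k-1}),\cov_{\mathcal S}(v_k)\le3/4$ guarantee that \eqref{eq:coveredv} survives the added $1/4$, and $\cov_{\mathcal S}(e_{k-1})=3/4$ makes $e_{k-1}$ reach exactly $1$.

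The main obstacle is the boundary bookkeeping. The gadget's own end values ($1/4$ and $3/4$ at the ends) do not directly produce the claimed profile, in which $v_{k-1}$, $v_k$ and $e_{k-1}$ increase by exactly $1/4$ and everything beyond is exactly $1$. Extra weight-$1/4$ pendant pairs must be chosen at the far right end $v_{k+4t-1},v_{k+4t}$ so that they fit \eqref{eq:reverseArcs} and \eqref{eq:singleAttachment} without overshooting the vertex bound. If the simple pendant pairs do not fit, I would try instead a reversed copy of the Construction~\ref{constr:extension} piece at the far end, found by the LP search described in the remark.
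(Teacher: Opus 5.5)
\textbf{Gap: the far end of the new block.} You flag this as the main obstacle but do not resolve it. After gluing $\mathcal S_{4t}^\ddag$ with its $v_0$ at $v_k$, the vertices $v_{k+4t-1}$, $v_{k+4t}$ and the arc $e_{k+4t-1}$ are left at cover $3/4$. Your proposed fix, an extra $\pi$-paired pendant pair, cannot supply the missing $1/4$.

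Suppose $\{C,\pi(C)\}$ is attached at a path vertex $u$ via arcs $(u,x)$ and $(\pi(x),u)$. Since $\psi$ is a homomorphism fixing the path, $u$ must have both an out-neighbour $\psi(x)$ and an in-neighbour $\psi(\pi(x))$ on the path. So $u\neq v_{k+4t}$. For $u=v_{k+4t-1}$, the incoming half puts weight $1/4$ on $v_{k+4t-2}$ and on $e_{k+4t-2}$. Both already have cover $1$ from $\mathcal S_{4t}^\ddag$, so \eqref{eq:coveredv} and \eqref{eq:coveredepartial} fail, and $v_{k+4t-1}$ is not even raised. Deferring to an LP search is not an argument.

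The missing idea is that a component with \emph{no} attachment to the path may be fixed by $\pi$, since \eqref{eq:singleAttachment} and \eqref{eq:reverseArcs} impose nothing on it. Add a disjoint arc $(r_{k+4t-1},r_{k+4t})$ with $\psi(r_i)=v_i$, weight $1/4$, fixed by $\pi$, in its own partition class. It is a copy of $\vec{P}_1$, hence tournament anti-Sidorenko. It raises exactly $v_{k+4t-1}$, $v_{k+4t}$ and $e_{k+4t-1}$ by $1/4$ and nothing else. The same disjoint arc placed over $e_{k-1}$ is the whole proof for $t=0$, which you left as an unspecified ``ad hoc gadget''. This is what the paper does in both cases.

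\textbf{The junction: right idea, wrong bookkeeping.} Your plan here matches the paper's, but the accounting needs fixing.
\begin{enumerate}[label=(\roman*)]
\item The gadget's low-cover end $v_0$ (cover $1/4$) sits at $v_k$. This, not the pendant pair, is what raises $v_k$ by $1/4$.
\item The gadget leaves $e_k$ and $v_{k+1}$ at $3/4$. So it is not true that the chained gadget fills every $e_i$ with $i\geq k$ and every $v_i$ with $i\geq k+1$.
\item The pendant pair should be one-vertex components $p_{k-1},p_{k+1}$ of weight $1/4$, with $\psi(p_j)=v_j$ and $\pi(p_{k-1})=p_{k+1}$. Both must attach at $v_k$, via arcs $(p_{k-1},v_k)$ and $(v_k,p_{k+1})$. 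If your ``$y^+\to v_{k-1}$, $v_k\to y^-$'' means arcs at two different path vertices, they cannot be paired under \eqref{eq:reverseArcs}.
\item With this pair, $p_{k-1}$ and its arc supply the extra $1/4$ at $v_{k-1}$ and $e_{k-1}$. Its partner $p_{k+1}$ and its arc supply the missing $1/4$ at $v_{k+1}$ and $e_k$.
\end{enumerate}
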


\begin{proof}
Without loss of generality, we can assume $e_{k-1}=(v_{k-1},v_k)$, as the case $e_{k-1}=(v_k,v_{k-1})$ is obtained by reversing all arcs in the construction below. 

First suppose that $t=0$. Let $r_{k-1}$ and $r_k$ be two new vertices, and add the arc $(r_{k-1},r_k)$.
Define
\[
\psi_{+0}(r_{k-1})=v_{k-1}
\qquad\text{and}\qquad
\psi_{+0}(r_k)=v_k.
\]
We assign the component $\{r_{k-1},r_k\}$ to weight $1/4$, and let $\pi_{+0}$ fix both $r_{k-1}$ and $r_k$. On all vertices and components already present in $\vec{D}$, let $\psi_{+0},\omega_{+0}$ and $\pi_{+0}$ agree with $\psi,\omega$ and $\pi$. The new disjoint arc contributes $1/4$ to the cover of $v_{k-1}$, $v_k$ and $e_{k-1}$, and changes no other covers. This proves the result when $t=0$.

Now assume that $t\geq1$. Let $\mathcal S_{4t}^{\ddag}
=
(\vec{D}_{4t}^{\ddag},\psi_{4t}^{\ddag},\omega_{4t}^{\ddag},\pi_{4t}^{\ddag})$ 
be the partial $\vec{P}_{4t}$-sandwich from Construction~\ref{constr:extended}. Form a new oriented graph by taking the disjoint union of $\vec{D}$ and this copy of $\vec{D}_{4t}^{\ddag}$. On the copy of $\vec{D}_{4t}^{\ddag}$, relabel the vertices on its central path by $v_k,v_{k+1},\dots,v_{k+4t}$ and then identify the vertex $v_k$ in $\vec{D}$ with the vertex that is now labeled $v_k$ in $\vec{D}_{4t}^{\ddag}$. 

Next add two new one-vertex components $p_{k-1}$ and $p_{k+1}$, each of weight $1/4$, with
\[
\psi_{+4t}(p_{k-1})=v_{k-1}
\qquad\text{and}\qquad
\psi_{+4t}(p_{k+1})=v_{k+1}.
\]
Add the two attachment arcs
\[
(p_{k-1},v_k)
\qquad\text{and}\qquad
(v_k,p_{k+1}).
\]
Finally, add two new vertices $r_{k+4t-1}$ and $r_{k+4t}$, with
\[
\psi_{+4t}(r_{k+4t-1})=v_{k+4t-1}
\qquad\text{and}\qquad
\psi_{+4t}(r_{k+4t})=v_{k+4t},
\]
and add the disjoint arc
\[
(r_{k+4t-1},r_{k+4t}).
\]
Give the component $\{r_{k+4t-1},r_{k+4t}\}$ weight $1/4$.

We now define the maps and weights. On the original copy of $\vec{D}$, set
\[
\psi_{+4t}=\psi,\qquad \omega_{+4t}=\omega,\qquad \pi_{+4t}=\pi.
\]
On the copy of $\vec{D}_{4t}^{\ddag}$, use $\psi_{4t}^{\ddag},\omega_{4t}^{\ddag}$ and $\pi_{4t}^{\ddag}$, where the domains of these functions are adjusted to account for the fact that the vertices of $\vec{D}_{4t}^{\ddag}$ have been relabeled $v_k,\dots,v_{k+4t}$. On the newly added one-vertex components, set
\[
\pi_{+4t}(p_{k-1})=p_{k+1}
\qquad\text{and}\qquad
\pi_{+4t}(p_{k+1})=p_{k-1}.
\]
On the final disjoint arc component, set
\[
\pi_{+4t}(r_{k+4t-1})=r_{k+4t-1}
\qquad\text{and}\qquad
\pi_{+4t}(r_{k+4t})=r_{k+4t}.
\]

It remains to check the covers. On the old part of the path away from the right endpoint, nothing has changed, so
\[
\cov_{\mathcal S_{+4t}}(v_i)=\cov_{\mathcal S}(v_i)
\quad\text{for all }0\leq i\leq k-2,
\]
and
\[
\cov_{\mathcal S_{+4t}}(e_i)=\cov_{\mathcal S}(e_i)
\quad\text{for all }0\leq i\leq k-2.
\]
The vertex $p_{k-1}$ contributes $1/4$ to the cover of $v_{k-1}$, while the copy of $\vec{D}_{4t}^{\ddag}$ contributes $1/4$ to the cover of $v_k$, since
\[
\cov_{\mathcal S_{4t}^{\ddag}}(v_0)=1/4
\]
by Construction~\ref{constr:extended}. Therefore
\[
\cov_{\mathcal S_{+4t}}(v_{k-1})
=
\cov_{\mathcal S}(v_{k-1})+1/4
\]
and
\[
\cov_{\mathcal S_{+4t}}(v_k)
=
\cov_{\mathcal S}(v_k)+1/4.
\]
The arc $(p_{k-1},v_k)$ maps under $\psi_{+4t}$ to $(v_{k-1},v_k)=e_{k-1}$, so it supplies the missing $1/4$ on $e_{k-1}$. Hence
\[
\cov_{\mathcal S_{+4t}}(e_{k-1})=1.
\]

For the added block, Construction~\ref{constr:extended} gives cover $3/4$ at the vertices $v_{k+1},v_{k+4t-1}$ and $v_{k+4t}$, and cover $1$ at all other new vertices. The component $p_{k+1}$ contributes $1/4$ to $v_{k+1}$, while the final disjoint arc component contributes $1/4$ to both $v_{k+4t-1}$ and $v_{k+4t}$. Thus
\[
\cov_{\mathcal S_{+4t}}(v_i)=1
\quad\text{for all }k+1\leq i\leq k+4t.
\]
Similarly, Construction~\ref{constr:extended} gives cover $3/4$ on the first and last arcs of the added block and cover $1$ on all other arcs of the added block. The arc $(v_k,p_{k+1})$ maps under $\psi_{+4t}$ to $(v_k,v_{k+1})=e_k$, so it supplies the missing $1/4$ on $e_k$. The disjoint arc $(r_{k+4t-1},r_{k+4t})$ maps to $e_{k+4t-1}$, so it supplies the missing $1/4$ on the last arc of the added block. Hence
\[
\cov_{\mathcal S_{+4t}}(e_i)=1
\quad\text{for all }k\leq i\leq k+4t-1.
\]

Finally, all remaining properties of a partial sandwich are inherited from $\mathcal S$ and Construction~\ref{constr:extended}, together with the definitions above. The two one-vertex components $p_{k-1}$ and $p_{k+1}$ are paired by $\pi_{+4t}$, have the same weight, and their attachment arcs are reversed around the path vertex $v_k$. The final two-vertex component is disjoint from the path and is fixed by $\pi_{+4t}$. The new components are either isolated vertices or a directed arc, and hence satisfy the required anti-Sidorenko component condition. Therefore
\[
\mathcal S_{+4t}
=
(\vec{D}_{+4t},\psi_{+4t},\omega_{+4t},\pi_{+4t})
\]
is the desired partial sandwich.
\end{proof}

The next lemma is proved in the same way as Lemma~\ref{lem:fourBlockExtensionRight}, applied at the left endpoint instead of the right endpoint. Equivalently, one may reverse the order of the vertices of the path, apply Lemma~\ref{lem:fourBlockExtensionRight}, and then reverse the order back. 

\begin{lem}
\label{lem:fourBlockExtensionLeft}
Let $\vec{Q}$ be an oriented path with vertices $v_0,\dots,v_k$, and let $\mathcal S=(\vec{D},\psi,\omega,\pi)$ be a partial $\vec{Q}$-sandwich. If
\[
\cov_{\mathcal S}(v_0)\leq 3/4, \quad \cov_{\mathcal S}(v_1)\leq 3/4, \quad\cov_{\mathcal S}(e_0)=3/4,
\]
then, for all $t\geq0$, there exists a partial sandwich $\mathcal S_{-4t}=(\vec{D}_{-4t},\psi_{-4t},\omega_{-4t},\pi_{-4t})$ for the path obtained from $\vec{Q}$ by extending the left endpoint by a directed block of length $4t$ such that, after relabeling the old vertex $v_i$ as $v_{i+4t}$,
\[
\cov_{\mathcal S_{-4t}}(v_i)=1\text{ for all }0\leq i\leq 4t-1,
\]
\[
\cov_{\mathcal S_{-4t}}(e_i)=1\text{ for all }0\leq i\leq 4t-1,
\]
\[
\cov_{\mathcal S_{-4t}}(v_{4t})=\cov_{\mathcal S}(v_0)+1/4,\quad
\cov_{\mathcal S_{-4t}}(v_{4t+1})=\cov_{\mathcal S}(v_1)+1/4,\quad
\cov_{\mathcal S_{-4t}}(e_{4t})=1,
\]
\[
\cov_{\mathcal S_{-4t}}(v_{i+4t})=\cov_{\mathcal S}(v_i)\text{ for all }2\leq i\leq k,
\]
and
\[
\cov_{\mathcal S_{-4t}}(e_{i+4t})=\cov_{\mathcal S}(e_i)\text{ for all }1\leq i\leq k-1.
\]
\end{lem}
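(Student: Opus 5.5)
The plan is to deduce Lemma~\ref{lem:fourBlockExtensionLeft} from Lemma~\ref{lem:fourBlockExtensionRight} by reversing the order of the path vertices, rather than redoing the construction. Let $\vec{Q}$ have vertices $v_0,\dots,v_k$ and let $\vec{Q}'$ be the same oriented graph with its vertices relabeled $v'_i:=v_{k-i}$. Then $\vec{Q}'$ is an oriented path in the sense of the paper; the only change is that ``left'' and ``right'' are swapped. The arc $e'_i$ of $\vec{Q}'$ (between $v'_i$ and $v'_{i+1}$) is the arc $e_{k-1-i}$ of $\vec{Q}$, with the same actual orientation.

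The partial $\vec{Q}$-sandwich $\mathcal S=(\vec{D},\psi,\omega,\pi)$ is, verbatim, a partial $\vec{Q}'$-sandwich $\mathcal S'$. This is because Definition~\ref{def:sandwich} refers only to the oriented graphs $\vec{H}$ and $\vec{D}$ and never to the linear labeling of the path. The coverages transform by $\cov_{\mathcal S'}(v'_i)=\cov_{\mathcal S}(v_{k-i})$ and $\cov_{\mathcal S'}(e'_i)=\cov_{\mathcal S}(e_{k-1-i})$. Hence the hypotheses $\cov_{\mathcal S}(v_0),\cov_{\mathcal S}(v_1)\le 3/4$ and $\cov_{\mathcal S}(e_0)=3/4$ become exactly $\cov_{\mathcal S'}(v'_k),\cov_{\mathcal S'}(v'_{k-1})\le 3/4$ and $\cov_{\mathcal S'}(e'_{k-1})=3/4$. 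These are the hypotheses of Lemma~\ref{lem:fourBlockExtensionRight}.

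Next, apply Lemma~\ref{lem:fourBlockExtensionRight} to $\mathcal S'$ with the given $t$. This yields a partial sandwich for $\vec{Q}'$ extended at its right endpoint $v'_k=v_0$ by a directed block of length $4t$, with vertices $v'_0,\dots,v'_{k+4t}$. Here one must check that ``extending by a directed block'' can be taken with either orientation of the new arcs. The proof of Lemma~\ref{lem:fourBlockExtensionRight} handles this: the extension follows the direction of $e_{k-1}$, and the opposite direction is obtained by reversing all arcs of the gadget, which is allowed by Lemma~\ref{lem:reverseAllArcs} together with the observation that reversing all arcs preserves every sandwich condition. Equivalently, the extension with a specified direction exists whenever the statement's block direction is the one intended.

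Finally, relabel back via $w_j:=v'_{k+4t-j}$. Under this map the old vertex $v_i=v'_{k-i}$ becomes $w_{i+4t}$, as the statement requires, and the new vertices $v'_{k+1},\dots,v'_{k+4t}$ become $w_{4t-1},\dots,w_0$. Translating the conclusions of Lemma~\ref{lem:fourBlockExtensionRight} then gives each required equality:
\begin{itemize}
\item new vertices and arcs receive cover $1$;
\item $w_{4t}=v_0$ and $w_{4t+1}=v_1$ gain $1/4$;
\item the arc $e_{4t}$ (the old $e_0$) reaches cover $1$;
\item untouched vertices $v_i$ with $2\le i\le k$ and arcs $e_i$ with $1\le i\le k-1$ keep their covers.
\end{itemize}
The main point to watch is this index bookkeeping in the final translation, especially checking that the ranges $0\le i\le k-2$ in Lemma~\ref{lem:fourBlockExtensionRight} map onto exactly $2\le i\le k$ for vertices and $1\le i\le k-1$ for arcs. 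They do, since $k-i'$ for $0\le i'\le k-2$ ranges over $2,\dots,k$, and $k-1-i'$ ranges over $1,\dots,k-1$.
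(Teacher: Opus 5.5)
Your proposal is correct and follows the paper's own argument: the paper proves Lemma~\ref{lem:fourBlockExtensionLeft} by reversing the labels of the path, applying Lemma~\ref{lem:fourBlockExtensionRight}, and reversing back, and your index bookkeeping for this is right.

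The only thing to tidy is the orientation digression. The new block cannot be oriented arbitrarily: the connector pair glued at the old $v_0$ forces it to continue the direction of $e_0$, and the arc reversal in the proof of Lemma~\ref{lem:fourBlockExtensionRight} only handles the two possible directions of $e_{k-1}$. No extra check is needed, though. Relabeling preserves actual arc orientations, so the right extension that continues $e'_{k-1}=e_0$ is exactly the required left extension.
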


The next lemma is obtained by applying Lemmas~\ref{lem:fourBlockExtensionLeft} and~\ref{lem:fourBlockExtensionRight} on the left and right side, respectively.

\begin{lem}
\label{lem:fourBlockExtensionBoth}
Let $\vec{Q}$ be an oriented path with vertices $v_0,\dots,v_k$, and let $\mathcal S=(\vec{D},\psi,\omega,\pi)$ be a partial $\vec{Q}$-sandwich. If
\[
\cov_{\mathcal S}(v_0)\leq 3/4, \quad \cov_{\mathcal S}(v_1)\leq 3/4, \quad\cov_{\mathcal S}(e_0)=3/4,
\]
and
\[
\cov_{\mathcal S}(v_{k-1})\leq 3/4, \quad \cov_{\mathcal S}(v_k)\leq 3/4, \quad\cov_{\mathcal S}(e_{k-1})=3/4,
\]
then, for all $s,t\geq0$, there exists a partial sandwich
\[
\mathcal S_{-4s,+4t}=(\vec{D}_{-4s,+4t},\psi_{-4s,+4t},\omega_{-4s,+4t},\pi_{-4s,+4t})
\]
for the path obtained from $\vec{Q}$ by extending the left endpoint by a directed block of length $4s$ and the right endpoint by a directed block of length $4t$ such that, after relabeling the old vertex $v_i$ as $v_{i+4s}$,
\[
\cov_{\mathcal S_{-4s,+4t}}(v_i)=1\text{ for all }0\leq i\leq 4s-1,
\]
\[
\cov_{\mathcal S_{-4s,+4t}}(e_i)=1\text{ for all }0\leq i\leq 4s-1,
\]
\[
\cov_{\mathcal S_{-4s,+4t}}(v_{4s})=\cov_{\mathcal S}(v_0)+1/4,\quad
\cov_{\mathcal S_{-4s,+4t}}(v_{4s+1})=\cov_{\mathcal S}(v_1)+1/4,\quad
\cov_{\mathcal S_{-4s,+4t}}(e_{4s})=1,
\]
\[
\cov_{\mathcal S_{-4s,+4t}}(v_{i+4s})=\cov_{\mathcal S}(v_i)\text{ for all }2\leq i\leq k-2,
\]
\[
\cov_{\mathcal S_{-4s,+4t}}(e_{i+4s})=\cov_{\mathcal S}(e_i)\text{ for all }1\leq i\leq k-2,
\]
\[
\cov_{\mathcal S_{-4s,+4t}}(v_{k+4s-1})=\cov_{\mathcal S}(v_{k-1})+1/4,\quad
\cov_{\mathcal S_{-4s,+4t}}(v_{k+4s})=\cov_{\mathcal S}(v_k)+1/4,\quad
\cov_{\mathcal S_{-4s,+4t}}(e_{k+4s-1})=1,
\]
\[
\cov_{\mathcal S_{-4s,+4t}}(v_i)=1\text{ for all }k+4s+1\leq i\leq k+4s+4t,
\]
and
\[
\cov_{\mathcal S_{-4s,+4t}}(e_i)=1
\text{ for all }k+4s\leq i\leq k+4s+4t-1.
\]
\end{lem}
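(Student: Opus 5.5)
The plan is to apply Lemma~\ref{lem:fourBlockExtensionRight} to $\mathcal S$ first and then apply Lemma~\ref{lem:fourBlockExtensionLeft} to the resulting partial sandwich. First, consider the degenerate case $k=1$, where $e_0=e_{k-1}$. In this case the two end-hypotheses overlap and the two extensions would each add $1/4$ to the same arc. This would give cover $5/4$, so the statement is implicitly for $k\geq 2$. Even for $k=2$ or $k=3$, the ranges $2\leq i\leq k-2$ are empty or small, and I will check that the bookkeeping is consistent there. Note that when $k=2$, $v_1=v_{k-1}$ receives $+1/4$ from both sides. The displayed formulas would then be inconsistent unless $k\geq 3$, so I will assume $k\geq 3$ (or treat $k=2$ by noting that the statement lists the value twice). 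The main thing to verify is that the hypotheses of the second lemma survive the first application.

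Step one applies Lemma~\ref{lem:fourBlockExtensionRight} with the given $t$ to $\mathcal S$, producing $\mathcal S_{+4t}$ on a path with vertices $v_0,\dots,v_{k+4t}$. By that lemma, every cover at $v_i$ and $e_i$ with $i\leq k-2$ is unchanged. Since $k\geq 3$ gives $1\leq k-2$, the values $\cov(v_0)$, $\cov(v_1)$ and $\cov(e_0)$ are all unchanged. Hence $\cov_{\mathcal S_{+4t}}(v_0)\leq 3/4$, $\cov_{\mathcal S_{+4t}}(v_1)\leq 3/4$ and $\cov_{\mathcal S_{+4t}}(e_0)=3/4$. The hypotheses of Lemma~\ref{lem:fourBlockExtensionLeft} therefore hold for $\mathcal S_{+4t}$.

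Step two applies Lemma~\ref{lem:fourBlockExtensionLeft} with parameter $s$ to $\mathcal S_{+4t}$, relabeling each old $v_i$ as $v_{i+4s}$. That lemma sets every cover on the new left block to $1$. It adds $1/4$ at $v_{4s}$ and $v_{4s+1}$ and sets $e_{4s}$ to $1$. It leaves all other covers, those at indices $i\geq 2$ for vertices and $i\geq 1$ for arcs, equal to their values in $\mathcal S_{+4t}$. Reading off the values from step one then yields the stated formulas: the original middle range $2\leq i\leq k-2$ is unchanged, and so is $e_i$ for $1\leq i\leq k-2$. The right end has $+1/4$ at $v_{k+4s-1}$ and $v_{k+4s}$, cover $1$ on $e_{k+4s-1}$, and cover $1$ on the right block. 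The resulting path is exactly $\vec{Q}$ extended by directed blocks of lengths $4s$ on the left and $4t$ on the right.

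The remaining step is to check that the two operations do not interact. This is only index bookkeeping: the left extension touches only the left end, while the changes from the right extension lie at indices $\geq k-1\geq 2$, which Lemma~\ref{lem:fourBlockExtensionLeft} preserves. All partial-sandwich axioms are inherited from the two lemmas.
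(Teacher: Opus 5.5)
Your proposal is correct and is essentially the paper's argument, which simply composes Lemmas~\ref{lem:fourBlockExtensionLeft} and~\ref{lem:fourBlockExtensionRight}; the order does not matter, and your check that the left-end hypotheses survive the right extension is the only real content. Restricting to $k\geq 3$ is the right call, and the paper leaves it implicit (every application has $k\geq 7$). However, for $k=2$ the composition adds $1/2$, not $1/4$, at the shared vertex $v_1$, so your remark that the statement ``lists the value twice'' does not rescue that case.
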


Next, we present a second repeating block gadget and then we show how they can be chained together.

\begin{constr}
\label{constr:extension2}
There exists a partial $\cev{P}_4$-sandwich
\[
\mathcal S_4^\mid=(\vec{D}_4^\mid,\psi_4^\mid,\omega_4^\mid,\pi_4^\mid)
\]
such that
\[
\cov_{\mathcal S_4^\mid}(v_{i})=
\begin{cases}
1/5&\text{if }i=0,\\
4/5&\text{if }i=2,\\
3/5&\text{otherwise},
\end{cases}
\]
and
\[
\cov_{\mathcal S_4^\mid}(v_{i+1},v_i)=
\begin{cases}
1&\text{if }i=1,\\
3/5&\text{otherwise}.
\end{cases}
\]
\end{constr}

\begin{proof}
The oriented graph $\vec{D}_4^\mid$ is shown in Figure~\ref{fig:extension2} in Appendix~\ref{app:gadget-atlas}. Each component of $\vec{D}_4^\mid\setminus V(\cev{P}_4)$ is assigned a weight of $1/5$ by $\omega_4^\mid$. Summing the indicated component weights gives the stated values of $\cov$, and the verification of \eqref{eq:anti-Sidcomponents} uses Proposition~\ref{prop:P22}. 
\end{proof}

\begin{constr}
\label{constr:extended2}
For each $t\geq1$, there exists a partial $\cev{P}_{4t}$-sandwich
\[
\mathcal S_{4t}^\parallel=(\vec{D}_{4t}^\parallel,\psi_{4t}^\parallel,\omega_{4t}^\parallel,\pi_{4t}^\parallel)
\]
such that
\[
\cov_{\mathcal S_{4t}^\parallel}(v_{i})=
\begin{cases}
1/5&\text{if }i=0,\\
3/5&\text{if }i\in\{1,4t-1,4t\},\\
4/5&\text{if }i=4t-2,\\
1&\text{otherwise},
\end{cases}
\]
and
\[
\cov_{\mathcal S_{4t}^\parallel}(v_{i+1},v_i)=
\begin{cases}
3/5&\text{if }i=0,\\
1&\text{otherwise}.
\end{cases}
\]
\end{constr}

\begin{proof}
The construction of $\vec{D}_{4t}^\parallel$ is as follows. Take $t$ disjoint copies of the oriented forest $\vec{D}_4^\mid$ from Construction~\ref{constr:extension2} and identify the vertex corresponding to $v_4$ in each of the first $t-1$ copies with the vertex corresponding to $v_0$ in the next copy. Note that the copies of $\cev{P}_4$ in the copies of $\vec{D}_{4t}^\parallel$ now form a copy of $\cev{P}_{4t}$; relabel these vertices $v_0,\dots,v_{4t}$ in the order that they come on the path. Next, for each $1\leq j\leq t-1$, add two vertices $y_{4j-1}^+$ and $y_{4j+1}^-$ and arcs $(y_{4j-1}^+,v_{4j})$ and $(v_{4j},y_{4j+1}^-)$. Also, for each $1\leq j\leq t-1$, add four vertices $z_{4j-2}^+,z_{4j-1}^{4j-2},z_{4j}^-$ and $z_{4j+1}^{4j}$ and four arcs $(v_{4j-1},z_{4j-2}^+),(z_{4j-1}^{4j-2},z_{4j-2}^+),(z_{4j}^-,v_{4j-1})$ and $(z_{4j+1}^{4j},z_{4j}^-)$. See Figure~\ref{fig:extended2} in Appendix~\ref{app:gadget-atlas} for a depiction of this in the case $t=3$. All components of $\vec{D}_{4t}^\parallel\setminus V(\cev{P}_{4t})$ are assigned weight $1/5$. The functions $\psi_{4t}^\parallel,\omega_{4t}^\parallel$ and $\pi_{4t}^\parallel$ are inherited from $\psi_4^\mid,\omega_4^\mid$ and $\pi_4^\mid$ in the natural way, where 
\[
\pi_{4t}^\parallel(z_{4j-2}^+)=z_{4j}^-,
\qquad
\pi_{4t}^\parallel(z_{4j-1}^{4j-2})=z_{4j+1}^{4j}
\]
for all $1\leq j\leq t-1$. 
\end{proof}

In the next six subsections, we prove the six cases of Lemma~\ref{lem:cover1/2}, one by one.

\subsection[Case 00 of Lemma~\ref{lem:cover1/2}]{Case \eqref{eq:00} of Lemma~\ref{lem:cover1/2}}

We consider the first case of Lemma~\ref{lem:cover1/2}: $k\equiv 0\bmod 4$ and $a\equiv 0\bmod 4$. We exhibit a partial $\vec{P}_8$-sandwich which is used with Lemma~\ref{lem:fourBlockExtensionBoth} to deal with this case. Recall that, throughout this section, $e_i$ is the arc between $v_i$ and $v_{i+1}$, in one direction or the other.

\begin{constr}
\label{constr:0mod4 0mod4}
There exists a partial $\vec{P}_8$-sandwich
\[
\mathcal S_8^\S=(\vec{D}_8^\S,\psi_8^\S,\omega_8^\S,\pi_8^\S)
\]
such that
\[
\cov_{\mathcal S_8^\S}(v_{i})=
\begin{cases}
1/2&\text{if }i=4,\\
3/4&\text{if }i\in\{0,1,7,8\},\\
1&\text{otherwise},
\end{cases}
\]
and
\[
\cov_{\mathcal S_8^\S}(e_i)=
\begin{cases}
3/4&\text{if }i\in \{0,7\},\\
1&\text{otherwise}.
\end{cases}
\]
\end{constr}

\begin{proof}
The oriented graph $\vec{D}_8^\S$ is shown in Figure~\ref{fig:0mod4 0mod4} in Appendix~\ref{app:gadget-atlas}. Each component of $\vec{D}_8^\S\setminus V(\vec{P}_8)$ is assigned a weight of $1/4$ by $\omega_8^\S$. Summing the indicated component weights gives the stated values of $\cov$, and the verification of \eqref{eq:anti-Sidcomponents} uses Proposition~\ref{prop:P12}. 
\end{proof}

\begin{proof}[Proof of Lemma~\ref{lem:cover1/2} \eqref{eq:00}]
Let $k$ and $a$ be such that $2\leq a\leq k-2$, $k\equiv0\bmod4$ and $a\equiv0\bmod4$. Thus $4\leq a\leq k-4$ and $k\geq8$. Start with the partial sandwich $\mathcal S_8^\S$ from Construction~\ref{constr:0mod4 0mod4}. Its distinguished vertex is $v_4$, and it has the endpoint cover pattern required by Lemma~\ref{lem:fourBlockExtensionBoth} at both endpoints.

Extend this certificate to the left by $a-4$ arcs and to the right by $k-a-4$ arcs using Lemma~\ref{lem:fourBlockExtensionBoth}. After relabeling the central path as $v_0,\dots,v_k$, the vertex $v_4$ of the original copy of $\vec{D}_8^\S$ becomes $v_a$. The extension lemma shows that all arc covers are equal to $1$ and all vertex covers are at most $1$, while the cover of this distinguished vertex remains $1/2$. Thus the resulting tuple is the desired sandwich satisfying \eqref{eq:covera}.
\end{proof}

\subsection[Case 02 of Lemma~\ref{lem:cover1/2}]{Case \eqref{eq:02} of Lemma~\ref{lem:cover1/2}}

Next, we tackle the second case of Lemma~\ref{lem:cover1/2}: $k\equiv 0\bmod 4$ and $a\equiv 2\bmod 4$. For this, we require separate constructions for the case that $(k,a)=(4,2)$, that $a\in\{2,k-2\}$ and $k\geq8$, or that $a\notin\{2,k-2\}$. 

\begin{constr}
\label{constr:0mod4 2mod4 4}
There exists a partial $\vec{P}_{1,3}$-sandwich
\[
\mathcal S_{1,3}^\S=(\vec{D}_{1,3}^\S,\psi_{1,3}^\S,\omega_{1,3}^\S,\pi_{1,3}^\S)
\]
such that
\[
\cov_{\mathcal S_{1,3}^\S}(v_{i})=
\begin{cases}
0&\text{if }i=2,\\
1&\text{otherwise},
\end{cases}
\]
and $\cov_{\mathcal S_{1,3}^\S}(e_i)=1$ for all $0\leq i\leq3$.
\end{constr}

\begin{proof}
The oriented graph $\vec{D}_{1,3}^\S$ is shown in Figure~\ref{fig:0mod4 2mod4 4} in Appendix~\ref{app:gadget-atlas}. Each component of $\vec{D}_{1,3}^\S\setminus V(\vec{P}_{1,3})$ is assigned a weight of $1$ by $\omega_{1,3}^\S$. Summing the indicated component weights gives the stated values of $\cov$.
\end{proof}

\begin{constr}
\label{constr:0mod4 2mod4 8}
There exists a partial $\vec{P}_{3,5}$-sandwich
\[
\mathcal S_{3,5}^\S=(\vec{D}_{3,5}^\S,\psi_{3,5}^\S,\omega_{3,5}^\S,\pi_{3,5}^\S)
\]
such that
\[
\cov_{\mathcal S_{3,5}^\S}(v_{i})=
\begin{cases}
3/8&\text{if }i=2,\\
3/4&\text{if }i\in\{7,8\},\\
1&\text{otherwise},
\end{cases}
\]
and
\[
\cov_{\mathcal S_{3,5}^\S}(e_i)=
\begin{cases}
3/4&\text{if }i=7,\\
1&\text{otherwise}.
\end{cases}
\]
\end{constr}

\begin{proof}
The oriented graph $\vec{D}_{3,5}^\S$ is shown in Figure~\ref{fig:0mod4 2mod4 8} in Appendix~\ref{app:gadget-atlas}. Each component of $\vec{D}_{3,5}^\S\setminus V(\vec{P}_{3,5})$ is assigned a weight of $1/8,2/8,3/8$ or $5/8$ by $\omega_{3,5}^\S$, as indicated in the figure. Summing the indicated component weights gives the stated values of $\cov$ and the verification of \eqref{eq:anti-Sidcomponents} uses Proposition~\ref{prop:P12} and Proposition~\ref{prop:P22}.
\end{proof}

\begin{constr}
\label{constr:0mod4 2mod4 general}
Let $a\geq6$ such that $a\equiv 2\bmod 4$. There exists a partial $\vec{P}_{a-1,a+1}$-sandwich
\[
\mathcal S_{a-1,a+1}^\S=(\vec{D}_{a-1,a+1}^\S,\psi_{a-1,a+1}^\S,\omega_{a-1,a+1}^\S,\pi_{a-1,a+1}^\S)
\]
such that
\[
\cov_{\mathcal S_{a-1,a+1}^\S}(v_{i})=
\begin{cases}
3/4&\text{if }i\in\{0,1\},\\
1/2&\text{if }i=a,\\
1&\text{otherwise},
\end{cases}
\]
and
\[
\cov_{\mathcal S_{a-1,a+1}^\S}(e_i)=
\begin{cases}
3/4&\text{if }i=0,\\
1&\text{otherwise}.
\end{cases}
\]
\end{constr}

\begin{proof}
Write $a=4t+2$, where $t\geq 1$. The construction is shown in Figure~\ref{fig:0mod4 2mod4 general} in Appendix~\ref{app:gadget-atlas} in the case $a=10$ (i.e. $t=2$). The general construction is obtained from the same diagram by extending the repeated $4$-edge gadgets on the left and on the right. More precisely, the left side consists of the initial two one-vertex components of weight $5/54$ and a disjoint arc of weight $1/36$, followed by $t$ copies of the $\vec{D}_4^\dag$-type gadget used in Construction~\ref{constr:extension}, each component of which has weight $5/54$, linked by the one-vertex connector components of weight $5/54$, as in the proof of Construction~\ref{constr:extension}. The right side is obtained analogously using $\vec{D}_4^\mid$-type gadgets similar to the proof of Construction~\ref{constr:extension2}, where every component has weight $4/54$, and there is a disjoint copy of $\vec{P}_{2,2}$ of weight $4/54$ at the end. The central part of the construction is the part of the diagram around the columns $a-2,a-1,a,a+1,a+2,a+3$ depicted in blue in the diagram; it is unchanged, up to translating the indices. Finally, there are two copies of $\vec{P}_{a-1,a-1}$ where the first copy has its leaves in column $0$ and centre vertex in column $a-1$ and the second has its leaves in column $2a$ and centre vertex in column $a+1$. There is an arc from $v_a$ to the centre vertex of the first copy and an arc from the centre vertex of the second copy to $v_a$. These paths receive weight $17/54$ and arcs are depicted in red in the figure. 

Let $\vec{D}_{a-1,a+1}^\S$ be the resulting oriented forest. We define $\psi_{a-1,a+1}^\S:\vec{D}_{a-1,a+1}^\S\to \vec{P}_{a-1,a+1}$ by sending every vertex drawn in column $i$ to $v_i$; in particular, $\psi_{a-1,a+1}^\S$ fixes the copy of $\vec{P}_{a-1,a+1}$. The weight function $\omega_{a-1,a+1}^\S$ assigns to each component of $\vec{D}_{a-1,a+1}^\S\setminus V(\vec{P}_{a-1,a+1})$ the weight indicated on that component in the figure. The involution $\pi_{a-1,a+1}^\S$ pairs the components in the evident way: each component attached to the path by an outgoing arc is paired with an isomorphic component of the same weight attached by the reversed incoming arc.

It is immediate from the construction that $\psi_{a-1,a+1}^\S$ is a homomorphism and that $\pi_{a-1,a+1}^\S$ satisfies the required reversal condition for arcs between the path and the components. Also, every component of $\vec{D}_{a-1,a+1}^\S\setminus V(\vec{P}_{a-1,a+1})$ has at most one arc to the path. The verification of \eqref{eq:anti-Sidcomponents} uses Proposition~\ref{prop:P12}, Proposition~\ref{prop:P22}, and Lemma~\ref{lem:union}, exactly as in the preceding constructions. It also uses Theorem~\ref{th:2blocks}, which ensures that $\vec{P}_{a-1,a-1}$ is anti-Sidorenko. 

Summing the indicated weights column by column gives the stated values of $\cov$.  The repeated $4$-edge gadgets contribute total weight $1$ to every internal vertex and arc in their range, the central components contribute total vertex-cover $1/2$ in column $a$, and the red rail components supply the missing cover near the two ends.  This proves that $\mathcal S_{a-1,a+1}^\S$ is the desired partial $\vec{P}_{a-1,a+1}$-sandwich.
\end{proof}

Using the constructions described in this section, we prove Lemma~\ref{lem:cover1/2} \eqref{eq:02}. 

\begin{proof}[Proof of Lemma~\ref{lem:cover1/2} \eqref{eq:02}]
Let $k\equiv0\bmod4$ and $a\equiv2\bmod4$, where $2\leq a\leq k-2$. If $k=4$ and $a=2$, then Construction~\ref{constr:0mod4 2mod4 4} gives the desired sandwich.

Suppose next that one of $a$ and $k-a$ is equal to $2$. By reversing the labels on the central path if necessary, we may assume that $a=2$. Start with the partial sandwich $\mathcal S_{3,5}^\S$ from Construction~\ref{constr:0mod4 2mod4 8}. The distinguished vertex is $v_2$, and the right endpoint has the cover pattern required by Lemma~\ref{lem:fourBlockExtensionRight}. Extend to the right by $k-8$ arcs using Lemma~\ref{lem:fourBlockExtensionRight}. The distinguished vertex remains $v_2$, so \eqref{eq:covera} holds.

It remains to consider the case $6\leq a\leq k-6$. By reversing the labels on the central path if necessary, we may assume that $a\geq k/2$. Put $b:=k-a$. Then $b\equiv2\bmod4$ and $b\geq6$. Start with the partial sandwich $\mathcal S_{b-1,b+1}^\S$ from Construction~\ref{constr:0mod4 2mod4 general}; its distinguished vertex is the vertex in column $b$. Extend this certificate to the left by $a-b$ arcs using Lemma~\ref{lem:fourBlockExtensionLeft}. After relabeling, the distinguished vertex is $v_a$. The extension lemma preserves the stated cover at the distinguished vertex and fills all newly created arcs, so the resulting tuple is the desired sandwich satisfying \eqref{eq:covera}.
\end{proof}

\subsection[Case 21 of Lemma~\ref{lem:cover1/2}]{Case \eqref{eq:21} of Lemma~\ref{lem:cover1/2}}

Next, we obtain a construction that is useful for the third case of Lemma~\ref{lem:cover1/2}.

\begin{constr}
\label{constr:2mod4 1mod4}
There exists a partial $\vec{P}_{10}$-sandwich
\[
\mathcal S_{10}^\S=(\vec{D}_{10}^\S,\psi_{10}^\S,\omega_{10}^\S,\pi_{10}^\S)
\]
such that
\[
\cov_{\mathcal S_{10}^\S}(v_{i})=
\begin{cases}
1/2&\text{if }i=5,\\
3/4&\text{if }i\in\{0,1,9,10\},\\
1&\text{otherwise},
\end{cases}
\]
and
\[
\cov_{\mathcal S_{10}^\S}(e_i)=
\begin{cases}
3/4&\text{if }i\in \{0,9\},\\
1&\text{otherwise}.
\end{cases}
\]
\end{constr}

\begin{proof}
The oriented graph $\vec{D}_{10}^\S$ is shown in Figure~\ref{fig:2mod4 1mod4} in Appendix~\ref{app:gadget-atlas}. Each component of $\vec{D}_{10}^\S\setminus V(\vec{P}_{10})$ is assigned a weight of $1/4$ by $\omega_{10}^\S$ except for $\{y_4^+\}$ and $\{y_6^-\}$ which have weight $1/2$. Summing the indicated component weights gives the stated values of $\cov$ and the verification of \eqref{eq:anti-Sidcomponents} uses Proposition~\ref{prop:P12}.
\end{proof}

\begin{proof}[Proof of Lemma~\ref{lem:cover1/2} \eqref{eq:21}]
Let $k\equiv2\bmod4$ and $a\equiv1\bmod4$, where $2\leq a\leq k-2$. Then $a\geq5$ and $k-a\geq5$. Start with the partial sandwich $\mathcal S_{10}^\S$ from Construction~\ref{constr:2mod4 1mod4}. Its distinguished vertex is $v_5$, and it has the endpoint cover pattern required by Lemma~\ref{lem:fourBlockExtensionBoth} at both endpoints.

Extend this certificate to the left by $a-5$ arcs and to the right by $k-a-5$ arcs using Lemma~\ref{lem:fourBlockExtensionBoth}. After relabeling the central path as $v_0,\dots,v_k$, the vertex $v_5$ of the original copy of $\vec{D}_{10}^\S$ becomes $v_a$. Thus the distinguished vertex has cover $1/2$, all arc covers are equal to $1$, and all vertex covers are at most $1$. This gives the desired sandwich.
\end{proof}

\subsection[Case 23 of Lemma~\ref{lem:cover1/2}]{Case \eqref{eq:23} of Lemma~\ref{lem:cover1/2}}

Next, we present two constructions that are used to establish the fourth case of Lemma~\ref{lem:cover1/2}.

\begin{constr}
\label{constr:2mod4 3mod4 6}
There exists a partial $\vec{P}_{2,4}$-sandwich
\[
\mathcal S_{2,4}^\S=(\vec{D}_{2,4}^\S,\psi_{2,4}^\S,\omega_{2,4}^\S,\pi_{2,4}^\S)
\]
such that
\[
\cov_{\mathcal S_{2,4}^\S}(v_{i})=
\begin{cases}
1/2&\text{if }i=3,\\
3/4&\text{if }i\in\{0,1\},\\
1&\text{otherwise},
\end{cases}
\]
and
\[
\cov_{\mathcal S_{2,4}^\S}(e_i)=
\begin{cases}
3/4&\text{if }i=0,\\
1&\text{otherwise}.
\end{cases}
\]
\end{constr}

\begin{proof}
The oriented graph $\vec{D}_{2,4}^\S$ is shown in Figure~\ref{fig:2mod4 3mod4 6} in Appendix~\ref{app:gadget-atlas}. Each component of $\vec{D}_{2,4}^\S\setminus V(\vec{P}_{2,4})$ is assigned a weight of $1/4$ by $\omega_{2,4}^\S$. Summing the indicated component weights gives the stated values of $\cov$ and the verification of \eqref{eq:anti-Sidcomponents} uses  Proposition~\ref{prop:P22}. 
\end{proof}

\begin{constr}
\label{constr:2mod4 3mod4 14}
There exists a partial $\vec{P}_{4,4,6}$-sandwich
\[
\mathcal S_{4,4,6}^\S=(\vec{D}_{4,4,6}^\S,\psi_{4,4,6}^\S,\omega_{4,4,6}^\S,\pi_{4,4,6}^\S)
\]
such that
\[
\cov_{\mathcal S_{4,4,6}^\S}(v_{i})=
\begin{cases}
1/2&\text{if }i=7,\\
3/4&\text{if }i\in\{0,1,13,14\},\\
1&\text{otherwise},
\end{cases}
\]
and
\[
\cov_{\mathcal S_{4,4,6}^\S}(e_i)=
\begin{cases}
3/4&\text{if }i\in\{0,13\},\\
1&\text{otherwise}.
\end{cases}
\]
\end{constr}

\begin{proof}
The oriented graph $\vec{D}_{4,4,6}^\S$ is shown in Figure~\ref{fig:2mod4 3mod4 14} in Appendix~\ref{app:gadget-atlas}. Each component of $\vec{D}_{4,4,6}^\S\setminus V(\vec{P}_{4,4,6})$ is assigned a weight of $1/16,2/16$ or $4/16$ by $\omega_{4,4,6}^\S$, as indicated in the figure. Summing the indicated component weights gives the stated values of $\cov$ and the verification of \eqref{eq:anti-Sidcomponents} uses Propositions~\ref{prop:P12} and~\ref{prop:P22}. 
\end{proof}

\begin{proof}[Proof of Lemma~\ref{lem:cover1/2} \eqref{eq:23}]
Let $k\equiv2\bmod4$ and $a\equiv3\bmod4$, where $2\leq a\leq k-2$. Then $k-a\equiv3\bmod4$ as well.

First suppose that one of $a$ and $k-a$ is equal to $3$. By reversing the labels on the central path if necessary, we may assume that $k-a=3$. Start with the partial sandwich $\mathcal S_{2,4}^\S$ from Construction~\ref{constr:2mod4 3mod4 6}, relabeled so that its distinguished vertex $v_3$ becomes $v_a$. Extend to the left by \(a-3\) arcs using Lemma~\ref{lem:fourBlockExtensionLeft}. This gives the desired sandwich.

Now suppose that $a\geq7$ and $k-a\geq7$. Start with the partial sandwich $\mathcal S_{4,4,6}^\S$ from Construction~\ref{constr:2mod4 3mod4 14}, relabeled so that its distinguished vertex $v_7$ becomes $v_a$. Extend to the left by $a-7$ arcs and to the right by $k-a-7$ arcs using Lemma~\ref{lem:fourBlockExtensionBoth}. The extension lemma preserves the cover $1/2$ at the distinguished vertex and fills all endpoint and gluing deficits. Hence the resulting tuple satisfies \eqref{eq:covera}.
\end{proof}

\subsection[Case 30 of Lemma~\ref{lem:cover1/2}]{Case \eqref{eq:30} of Lemma~\ref{lem:cover1/2}}

We present two constructions which are used to establish the fifth case of Lemma~\ref{lem:cover1/2}.

\begin{constr}
\label{constr:3mod4 0mod4 7}
There exists a partial $\vec{P}_{3,4}$-sandwich
\[
\mathcal S_{3,4}^\S=(\vec{D}_{3,4}^\S,\psi_{3,4}^\S,\omega_{3,4}^\S,\pi_{3,4}^\S)
\]
such that
\[
\cov_{\mathcal S_{3,4}^\S}(v_{i})=
\begin{cases}
1/2&\text{if }i=4,\\
3/4&\text{if }i\in\{0,1\},\\
15/16&\text{if }i=7,\\
1&\text{otherwise},
\end{cases}
\]
and
\[
\cov_{\mathcal S_{3,4}^\S}(e_i)=
\begin{cases}
3/4&\text{if }i=0,\\
1&\text{otherwise}.
\end{cases}
\]
\end{constr}

\begin{proof}
The oriented graph $\vec{D}_{3,4}^\S$ is shown in Figure~\ref{fig:3mod4 0mod4 7} in Appendix~\ref{app:gadget-atlas}. Each component of $\vec{D}_{3,4}^\S\setminus V(\vec{P}_{3,4})$ is assigned a weight of $1/32,2/32,4/32,5/32,8/32$ or $10/32$ by $\omega_{3,4}^\S$, as indicated in the figure. Summing the indicated component weights gives the stated values of $\cov$ and, in the verification of \eqref{eq:anti-Sidcomponents}, we partition the components so that each $2$-arc path with two blocks with another $2$-arc path with one block which has the same weight and apply Proposition~\ref{prop:forest}. 
\end{proof}

\begin{constr}
\label{constr:3mod4 0mod4 11}
There exists a partial $\vec{P}_{3,8}$-sandwich
\[
\mathcal S_{3,8}^\S=(\vec{D}_{3,8}^\S,\psi_{3,8}^\S,\omega_{3,8}^\S,\pi_{3,8}^\S)
\]
such that
\[
\cov_{\mathcal S_{3,8}^\S}(v_{i})=
\begin{cases}
1/2&\text{if }i=4,\\
3/4&\text{if }i\in\{0,1,10,11\},\\
1&\text{otherwise},
\end{cases}
\]
and
\[
\cov_{\mathcal S_{3,8}^\S}(e_i)=
\begin{cases}
3/4&\text{if }i\in\{0,10\},\\
1&\text{otherwise}.
\end{cases}
\]
\end{constr}

\begin{proof}
The oriented graph $\vec{D}_{3,8}^\S$ is shown in Figure~\ref{fig:3mod4 0mod4 11} in Appendix~\ref{app:gadget-atlas}. Each component of $\vec{D}_{3,8}^\S\setminus V(\vec{P}_{3,8})$ is assigned a weight of $1/4$ or $1/8$ by $\omega_{3,8}^\S$, as indicated in the figure. Summing the indicated component weights gives the stated values of $\cov$ and, as in the proof of Construction~\ref{constr:extension}, the verification of \eqref{eq:anti-Sidcomponents} uses Proposition~\ref{prop:P22} and the fact that the path consisting of a block of length one and a block of length two is tournament anti-Sidorenko. 
\end{proof}

\begin{proof}[Proof of Lemma~\ref{lem:cover1/2} \eqref{eq:30}]
Let $k\equiv3\bmod4$ and $a\equiv0\bmod4$, where $2\leq a\leq k-2$. Thus $a\geq4$.

First suppose that $a=k-3$. Start with the partial sandwich $\mathcal S_{3,4}^\S$ from Construction~\ref{constr:3mod4 0mod4 7}, relabeled so that its distinguished vertex $v_4$ becomes $v_a$. Extend to the left by $a-4$ arcs using Lemma~\ref{lem:fourBlockExtensionLeft}. The distinguished vertex has cover $1/2$, so \eqref{eq:covera} holds.

Now suppose that $a\leq k-7$. Start with the partial sandwich $\mathcal S_{3,8}^\S$ from Construction~\ref{constr:3mod4 0mod4 11}, relabeled so that its distinguished vertex $v_4$ becomes $v_a$. Extend to the left by $a-4$ arcs and to the right by $k-a-7$ arcs using Lemma~\ref{lem:fourBlockExtensionBoth}. Again the distinguished vertex has cover $1/2$, and the extension lemma gives the required arc and vertex covers everywhere else.
\end{proof}

\subsection[Case 31 of Lemma~\ref{lem:cover1/2}]{Case \eqref{eq:31} of Lemma~\ref{lem:cover1/2}}

Next, we obtain a construction that is useful for the sixth case of Lemma~\ref{lem:cover1/2}.

\begin{constr}
\label{constr:3mod4 1mod4}
There exists a partial $\vec{P}_{4,3}$-sandwich
\[
\mathcal S_{4,3}^\S=(\vec{D}_{4,3}^\S,\psi_{4,3}^\S,\omega_{4,3}^\S,\pi_{4,3}^\S)
\]
such that
\[
\cov_{\mathcal S_{4,3}^\S}(v_{i})=
\begin{cases}
1/2&\text{if }i=5,\\
3/4&\text{if }i\in\{0,1,6,7\},\\
1&\text{otherwise},
\end{cases}
\]
and
\[
\cov_{\mathcal S_{4,3}^\S}(e_i)=
\begin{cases}
3/4&\text{if }i\in\{0,6\},\\
1&\text{otherwise}.
\end{cases}
\]
\end{constr}

\begin{proof}
The oriented graph $\vec{D}_{4,3}^\S$ is shown in Figure~\ref{fig:3mod4 1mod4} in Appendix~\ref{app:gadget-atlas}. Each component of $\vec{D}_{4,3}^\S\setminus V(\vec{P}_{4,3})$ is assigned a weight of $1/4$ by $\omega_{4,3}^\S$. Summing the indicated component weights gives the stated values of $\cov$ and, as in the proof of Construction~\ref{constr:extension}, the verification of \eqref{eq:anti-Sidcomponents} uses Proposition~\ref{prop:P22}. 
\end{proof}

\begin{proof}[Proof of Lemma~\ref{lem:cover1/2} \eqref{eq:31}]
Let $k\equiv3\bmod4$ and $a\equiv1\bmod4$, where $2\leq a\leq k-2$. Then $a\geq5$ and $k-a\geq2$. Start with the partial sandwich $\mathcal S_{4,3}^\S$ from Construction~\ref{constr:3mod4 1mod4}. Its distinguished vertex is $v_5$, and it has the endpoint cover pattern required by Lemma~\ref{lem:fourBlockExtensionBoth} at both endpoints.

Extend this certificate to the left by $a-5$ arcs and to the right by $k-a-2$ arcs using Lemma~\ref{lem:fourBlockExtensionBoth}. After relabeling, the distinguished vertex is $v_a$, and its cover is still $1/2$. Thus we obtain the desired sandwich satisfying \eqref{eq:covera}.
\end{proof}

\section{Conclusion}
\label{sec:concl}

We mention some related independent work. During discussions on Sidorenko and anti-Sidorenko phenomena while Kr\'a\v{l} was visiting the University of Victoria, we learned that Kr\'a\v{l}, Ku\v{c}er\'ak and Lidick\'y~\cite{KralKucerakLidickyPrivate} have obtained further examples of tournament anti-Sidorenko orientations of trees by different methods, based on Taylor-type expansions and the analysis of subgraph counts. In particular, their arguments give a tournament anti-Sidorenko orientation of the $(2,3,4)$-spider, which is a special case of Theorem~\ref{th:3spider}, and also show that the canonical orientation of the $(2,2,2,2)$-spider obtained as the union of two directed paths is tournament anti-Sidorenko. The latter clarifies the scope of Remark~5.3 in~\cite{FoxHimwichManiZhou24+}; it seems that this statement should be interpreted as applying only for sufficiently large $k$.

\bibliographystyle{plain}
\bibliography{OrientedPaths}

@article {CoreglianoRazborov17,
    AUTHOR = {Coregliano, L. N. and Razborov, A. A.},
     TITLE = {On the density of transitive tournaments},
   JOURNAL = {J. Graph Theory},
  FJOURNAL = {Journal of Graph Theory},
    VOLUME = {85},
      YEAR = {2017},
    NUMBER = {1},
     PAGES = {12--21}
}

@article {CoreglianoParenteSato19,
    AUTHOR = {Coregliano, L N. and Parente, R F. and Sato,
              C M.},
     TITLE = {On the maximum density of fixed strongly connected
              subtournaments},
   JOURNAL = {Electron. J. Combin.},
  FJOURNAL = {Electronic Journal of Combinatorics},
    VOLUME = {26},
      YEAR = {2019},
    NUMBER = {1},
     PAGES = {Paper No. 1.44, 48}
}

@article {BucicLongShapiraSudakov21,
    AUTHOR = {Buci\'{c}, M. and Long, E. and Shapira, A. and
              Sudakov, B.},
     TITLE = {Tournament quasirandomness from local counting},
   JOURNAL = {Combinatorica},
  FJOURNAL = {Combinatorica. An International Journal on Combinatorics and
              the Theory of Computing},
    VOLUME = {41},
      YEAR = {2021},
    NUMBER = {2},
     PAGES = {175--208}
}

@unpublished{Basit+25+,
  author =        {Basit, A. and Granet, B. and Horsley, D. and K\"undgen, A. and Staden, K.},
  note =          {E-print arXiv:2501.09842v2},
  title =         {The semi-inducibility problem},
  year =          {2025}
}

@article {Hancock+23,
    AUTHOR = {Hancock, R. and Kabela, A. and Kr\'{a}l', D. and
              Martins, T. and Parente, R. and Skerman, F.
              and Volec, J.},
     TITLE = {No additional tournaments are quasirandom-forcing},
   JOURNAL = {European J. Combin.},
  FJOURNAL = {European Journal of Combinatorics},
    VOLUME = {108},
      YEAR = {2023},
     PAGES = {Paper No. 103632, 10}
}

@article {ChanGrzesikKralNoel20,
    AUTHOR = {Chan, T. F. N. and Grzesik, A. and Kr\'{a}l', D.
              and Noel, J. A.},
     TITLE = {Cycles of length three and four in tournaments},
   JOURNAL = {J. Combin. Theory Ser. A},
  FJOURNAL = {Journal of Combinatorial Theory. Series A},
    VOLUME = {175},
      YEAR = {2020},
     PAGES = {105276, 23}
}

@article {ZhaoZhou20,
    AUTHOR = {Zhao, Y. and Zhou, Y.},
     TITLE = {Impartial digraphs},
   JOURNAL = {Combinatorica},
  FJOURNAL = {Combinatorica. An International Journal on Combinatorics and
              the Theory of Computing},
    VOLUME = {40},
      YEAR = {2020},
    NUMBER = {6},
     PAGES = {875--896}
}

@article {FoxHimwichManiZhou25,
    AUTHOR = {Fox, J. and Himwich, Z. and Mani, N. and Zhou, Y.},
     TITLE = {A Note on Directed Analogues of the {S}idorenko and Forcing Conjectures},
   JOURNAL = {Electron. J. Combin.},
  FJOURNAL = {The Electronic Journal of Combinatorics},
    VOLUME = {32},
      YEAR = {2025},
    NUMBER = {3},
     PAGES = {Paper No. 3.38}
}

@misc{KralKucerakLidickyPrivate,
  author = {Kr{\'a}{\v{l}}, D. and Ku{\v{c}}er{\'a}k, F. and Lidick{\'y}, B.},
  title = {Private communication},
  year = {2026}
}

@unpublished{ChenClemenNoel25+,
  author =        {Chen, H. and Clemen, F. C. and Noel, J. A.},
  note =          {E-print arXiv:2505.03903v1},
  title =         {Maximizing Alternating Paths via Entropy},
  year =          {2025}
}

@unpublished{HeManiNieTungWei25+,
  author =        {He, X. and Mani, N. and Nie, J. and Tung, N. and Wei, F.},
  note =          {E-print arXiv:2512.11222v1},
  title =         {New {S}idorenko-type inequalities in tournaments},
  year =          {2025}
}

@unpublished{FoxHimwichManiZhou24+,
  author = {Fox, J. and Himwich, Z. and Mani, N. and Zhou, Y.},
  title  = {Variations on {S}idorenko's conjecture in tournaments},
  note   = {To appear in J. Graph Theory. E-print arXiv:2402.08418v1},
  year   = {2024}
}

@article {Grzesik+23,
    AUTHOR = {Grzesik, A. and Il'kovi\v{c}, D. and Kielak, B. and Kr\'a\v{l}, D.},
     TITLE = {Quasirandom-forcing orientations of cycles},
   JOURNAL = {SIAM J. Discrete Math.},
  FJOURNAL = {SIAM Journal on Discrete Mathematics},
    VOLUME = {37},
      YEAR = {2023},
    NUMBER = {4},
     PAGES = {2689--2716}
}

@article {NoelRanganathanSimbaqueba26,
    AUTHOR = {Noel, J. A. and Ranganathan, A. and Simbaqueba, L.
              M.},
     TITLE = {Forcing quasirandomness in a regular tournament},
   JOURNAL = {Innov. Graph Theory},
  FJOURNAL = {Innovations in Graph Theory},
    VOLUME = {3},
      YEAR = {2026},
     PAGES = {127--169}
}

@article {Griffiths13,
    AUTHOR = {Griffiths, S.},
     TITLE = {Quasi-random oriented graphs},
   JOURNAL = {J. Graph Theory},
  FJOURNAL = {Journal of Graph Theory},
    VOLUME = {74},
      YEAR = {2013},
    NUMBER = {2},
     PAGES = {198--209}
}

@article {GrzesikKralLovaszVolec23,
    AUTHOR = {Grzesik, A. and Kr\'{a}l', D. and Lov\'{a}sz,
              L. M. and Volec, J.},
     TITLE = {Cycles of a given length in tournaments},
   JOURNAL = {J. Combin. Theory Ser. B},
  FJOURNAL = {Journal of Combinatorial Theory. Series B},
    VOLUME = {158},
      YEAR = {2023},
     PAGES = {117--145}
}

@article {KoppartyRossman11,
    AUTHOR = {Kopparty, S. and Rossman, B.},
     TITLE = {The homomorphism domination exponent},
   JOURNAL = {European J. Combin.},
  FJOURNAL = {European Journal of Combinatorics},
    VOLUME = {32},
      YEAR = {2011},
    NUMBER = {7},
     PAGES = {1097--1114}
}

@article {SahSawhneyZhao23,
    AUTHOR = {Sah, A. and Sawhney, M. and Zhao, Y.},
     TITLE = {Paths of given length in tournaments},
   JOURNAL = {Comb. Theory},
  FJOURNAL = {Combinatorial Theory},
    VOLUME = {3},
      YEAR = {2023},
    NUMBER = {2},
     PAGES = {Paper No. 5, 6}
}

@article {BehagueMorrisonNoel24,
    AUTHOR = {Behague, N. and Morrison, N. and Noel, J. A.},
     TITLE = {Off-diagonal commonality of graphs via entropy},
   JOURNAL = {SIAM J. Discrete Math.},
  FJOURNAL = {SIAM Journal on Discrete Mathematics},
    VOLUME = {38},
      YEAR = {2024},
    NUMBER = {3},
     PAGES = {2335--2360}
}

@article {KalyanasundaramShapira13,
    AUTHOR = {Kalyanasundaram, S. and Shapira, A.},
     TITLE = {A note on even cycles and quasirandom tournaments},
   JOURNAL = {J. Graph Theory},
  FJOURNAL = {Journal of Graph Theory},
    VOLUME = {73},
      YEAR = {2013},
    NUMBER = {3},
     PAGES = {260--266}
}

@article {ConlonFoxSudakov10,
    AUTHOR = {Conlon, D. and Fox, J. and Sudakov, B.},
     TITLE = {An approximate version of {S}idorenko's conjecture},
   JOURNAL = {Geom. Funct. Anal.},
  FJOURNAL = {Geometric and Functional Analysis},
    VOLUME = {20},
      YEAR = {2010},
    NUMBER = {6},
     PAGES = {1354--1366}
}

@article {BlekhermanRaymond22,
    AUTHOR = {Blekherman, G. and Raymond, A.},
     TITLE = {A path forward: tropicalization in extremal combinatorics},
   JOURNAL = {Adv. Math.},
  FJOURNAL = {Advances in Mathematics},
    VOLUME = {407},
      YEAR = {2022},
     PAGES = {Paper No. 108561, 68}
}

@article {BlekhermanRaymond23,
    AUTHOR = {Blekherman, G. and Raymond, A.},
     TITLE = {A new proof of the {E}rd{\H o}s-{S}imonovits conjecture on
              walks},
   JOURNAL = {Graphs Combin.},
  FJOURNAL = {Graphs and Combinatorics},
    VOLUME = {39},
      YEAR = {2023},
    NUMBER = {3},
     PAGES = {Paper No. 53, 8}
}

@unpublished{Kral+26+,
  author =        {Kr\'a\v{l}, D. and Krnc, M. and Ku\v{c}er\'ak, F. and Lidick\'y, B. and Volec, J.},
  note =          {E-print arXiv:2602.12551v1},
  title =         {{S}idorenko property and forcing in regular tournaments},
  year =          {2026}
}

@unpublished{ChenClemenNoelSharfenberg26+,
  author =        {Chen, H. and Clemen, F. C. and Noel, J. A. and Sharfenberg, A.},
  note =          {In preparation},
  title =         {Most Oriented Paths Are Not Tournament Anti-{S}idorenko},
  year =          {2026}
}

@unpublished{BitontiHoganNoelTsarev26+,
  author =        {Bitonti, V. and Hogan, E. and Noel, J. A. and Tsarev, D.},
  note =          {In preparation},
  title =         {Relative {S}idorenko Inequalities in Oriented Graphs},
  year =          {2026}
}

@article {Sidorenko93,
    AUTHOR = {Sidorenko, A. F.},
     TITLE = {A correlation inequality for bipartite graphs},
   JOURNAL = {Graphs Combin.},
  FJOURNAL = {Graphs and Combinatorics},
    VOLUME = {9},
      YEAR = {1993},
    NUMBER = {2},
     PAGES = {201--204}
}

@article {Hatami10,
    AUTHOR = {Hatami, H.},
     TITLE = {Graph norms and {S}idorenko's conjecture},
   JOURNAL = {Israel J. Math.},
  FJOURNAL = {Israel Journal of Mathematics},
    VOLUME = {175},
      YEAR = {2010},
     PAGES = {125--150}
}

@article {BehagueCrudeleNoelSimbaqueba25,
    AUTHOR = {Behague, N. and Crudele, G. and Noel, J. A.
              and Simbaqueba, L. M.},
     TITLE = {Sidorenko-{T}ype {I}nequalities for {P}airs of {T}rees},
   JOURNAL = {Random Structures Algorithms},
  FJOURNAL = {Random Structures \& Algorithms},
    VOLUME = {67},
      YEAR = {2025},
    NUMBER = {1},
     PAGES = {Paper No. e70026}
}

@article {ConlonLee21,
    AUTHOR = {Conlon, D. and Lee, J.},
     TITLE = {{S}idorenko's conjecture for blow-ups},
   JOURNAL = {Discrete Anal.},
  FJOURNAL = {Discrete Analysis},
      YEAR = {2021},
     PAGES = {Paper No. 2, 13}
}

\appendix

\section{Atlas of Sandwiches}
\label{app:gadget-atlas}

This appendix contains the explicit sandwich certificates used in Sections~\ref{sec:0mod4} and~\ref{sec:spiders}.  All figures follow the convention described in Remark~\ref{rem:convention}.  The values of $\cov$ in the corresponding propositions are obtained by summing these component weights column by column and between consecutive columns.

\begin{figure}[htbp]
\centering

\caption{
The oriented graph $\vec{D}_4^*$ from Construction~\ref{constr:D4*}. All components of $\vec{D}_4^*\setminus V(\vec{P}_4)$ have weight $1/2$. 
}
\label{fig:D4*}
\end{figure}

\begin{figure}[htbp]
\centering
%
\caption{
The oriented graph $\vec{D}_2^*$ from Construction~\ref{constr:Dk*}. The unique component of $\vec{D}_2^*\setminus V(\vec{P}_2)$ has weight $1/2$. 
}
\label{fig:D2*}
\end{figure}

\begin{figure}[htbp]
\centering
%
\caption{
The oriented graph $\vec{D}_3^*$ from Construction~\ref{constr:Dk*}. Blue solid components of $\vec{D}_3^*\setminus V(\vec{P}_3)$ have have weight $1/2$, and red dashed components have weight $1/4$.
}
\label{fig:D3*}
\end{figure}

\begin{figure}[htbp]
\centering
%
\caption{
The oriented graph $\vec{D}_5^*$ from Construction~\ref{constr:Dk*}. Blue solid components of $\vec{D}_5^*\setminus V(\vec{P}_5)$ have weight $1/4$, and red dashed components have weight $1/8$. There are eight $3$-vertex components of $\vec{D}_5^*\setminus V(\vec{P}_5)$, four are isomorphic to $\cev{P}_{1,1}$ and four are isomorphic to $\vec{P}_2$. The partition of $\mathcal{C}$ groups together each copy of $\cev{P}_{1,1}$ with a copy of $\vec{P}_2$.
}
\label{fig:D5*}
\end{figure}

\begin{figure}[htbp]
\centering
%
\caption{
The repeating block oriented graph $\vec{D}_4^\dag$ from Construction~\ref{constr:extension}. Each component of $\vec{D}_4^\dag\setminus V(\vec{P}_4)$ has weight $1/4$.
}
\label{fig:extension}
\end{figure}

\begin{figure}[htbp]
\centering
%
\caption{
The oriented graph $\vec{D}_{12}^\ddag$ from Construction~\ref{constr:extended}, shown for $t=3$. Each component of $\vec{D}_{12}^\dag\setminus V(\vec{P}_{12})$ has weight $1/4$.
}
\label{fig:extended}
\end{figure}

\begin{figure}[htbp]
\centering
%
\caption{
The oriented graph $\vec{D}_4^\mid$ from Construction~\ref{constr:extension2}. Each component of $\vec{D}_4^\mid\setminus V(\vec{P}_4)$ has weight $1/5$.}
\label{fig:extension2}
\end{figure}

\begin{figure}[htbp]
\centering
%
\caption{
The oriented graph $\vec{D}_{12}^\parallel$ from Construction~\ref{constr:extended2}, shown for $t=3$. Each component of $\vec{D}_{12}^\mid\setminus V(\vec{P}_{12})$ has weight $1/5$.}
\label{fig:extended2}
\end{figure}

\begin{figure}[htbp]
\centering
%
\caption{
The oriented graph $\vec{D}_8^\S$ from Construction~\ref{constr:0mod4 0mod4}. Each component of $\vec{D}_8^\S\setminus V(\vec{P}_8)$ has weight $1/4$.}
\label{fig:0mod4 0mod4}
\end{figure}

\begin{figure}[htbp]
\centering
%
\caption{
The oriented graph $\vec{D}_{1,3}^\S$ from Construction~\ref{constr:0mod4 2mod4 4}. Each component of $\vec{D}_{1,3}^\S\setminus V(\vec{P}_{1,3})$ has weight $1$.}
\label{fig:0mod4 2mod4 4}
\end{figure}

\begin{figure}[htbp]
\centering
%
\caption{
The oriented graph $\vec{D}_{3,5}^\S$ from Construction~\ref{constr:0mod4 2mod4 8}. Each component of $\vec{D}_{3,5}^\S\setminus V(\vec{P}_{3,5})$ contains a vertex that is labeled with the weight of that component.
}
\label{fig:0mod4 2mod4 8}
\end{figure}

\begin{figure}[htbp]
\centering
\rotatebox{360}{%
\begin{minipage}{\textwidth}
\centering
\resizebox{\linewidth}{!}{%
%
%
}
\end{minipage}%
}
\caption{
The oriented graph $\vec{D}_{a-1,a+1}^\S$ from Construction~\ref{constr:0mod4 2mod4 general}, shown for $a=10$. Each component of $\vec{D}_{9,11}^\S\setminus V(\vec{P}_{9,11})$ contains a vertex that is labeled with the weight of that component. The blue vertices and arcs in the central part are contained in $\vec{D}_{a-1,a+1}$ for all $a\geq6$. The red vertices and arcs are the copies of $\vec{P}_{a-1,a-1}$ and the arcs that attach them to $\vec{D}_{a-1,a+1}$.}
\label{fig:0mod4 2mod4 general}
\end{figure}

\label{app:spider-certificates}
\begin{figure}[htbp]
\centering

\caption{
The oriented graph $\vec{D}_{10}^\S$ from Construction~\ref{constr:2mod4 1mod4}. Each blue solid component of $\vec{D}_{10}^\S\setminus V(\vec{P}_{10})$  has weight $1/2$ and each red dashed component has weight $1/4$.}
\label{fig:2mod4 1mod4}
\end{figure}

\begin{figure}[htbp]
\centering
%
\caption{
The oriented graph $\vec{D}_{2,4}^\S$ from Construction~\ref{constr:2mod4 3mod4 6}. Each component of $\vec{D}_{2,4}^\S\setminus V(\vec{P}_{2,4})$ has weight $1/4$.
}
\label{fig:2mod4 3mod4 6}
\end{figure}

\begin{figure}[htbp]
\centering
%
\caption{
The oriented graph $\vec{D}_{4,4,6}^\S$ from Construction~\ref{constr:2mod4 3mod4 14}. Each component of $\vec{D}_{4,4,6}^\S\setminus V(\vec{P}_{4,4,6})$ contains a vertex that is labeled with the weight of that component.
}
\label{fig:2mod4 3mod4 14}
\end{figure}

\begin{figure}[htbp]
\centering
%
\caption{
The oriented graph $\vec{D}_{3,4}^\S$ from Construction~\ref{constr:3mod4 0mod4 7}. Each component of $\vec{D}_{3,4}^\S\setminus V(\vec{P}_{3,4})$ contains a vertex that is labeled with the weight of that component.
}
\label{fig:3mod4 0mod4 7}
\end{figure}

\begin{figure}[htbp]
\centering
%
\caption{
The oriented graph $\vec{D}_{3,8}^\S$ from Construction~\ref{constr:3mod4 0mod4 11}. Red dashed components have weight $1/8$, and blue solid components have weight $1/4$.
}
\label{fig:3mod4 0mod4 11}
\end{figure}

\begin{figure}[htbp]
\centering
%
\caption{
The oriented graph $\vec{D}_{4,3}^\S$ from Construction~\ref{constr:3mod4 1mod4}. Each component of $\vec{D}_{4,3}^\S\setminus V(\vec{P}_{4,3})$ has weight $1/4$.
}
\label{fig:3mod4 1mod4}
\end{figure}

\end{document}